\numberwithin{equation}{section}
\newcommand{\field}[1]{\mathbb{#1}}
\newcommand{\Z}{\field{Z}}
\newcommand{\R}{\field{R}}
\newcommand{\C}{\field{C}}
\newcommand{\N}{\field{N}}
\def\cC{\mathscr{C}}
\def\cE{\mathscr{E}}
\def\mA{\mathcal{A}}
\def\mB{\mathcal{B}}
\def\mE{\mathcal{E}}
\def\mF{\mathcal{F}}
\def\mG{\mathcal{G}}
\def\mH{\mathcal{H}}
\def\mT{\mathcal{T}}
\def\mU{\mathcal{U}}
\newcommand\mS{\mathcal{S}}
\def\Im{{\rm Im}}
\def\la{\langle}
\def\ra{\rangle}
\DeclareMathOperator{\End}{End}
\DeclareMathOperator{\Ker}{Ker}
\DeclareMathOperator{\Id}{Id}
\DeclareMathOperator{\tr}{Tr}
\DeclareMathOperator{\ind}{Ind}
\DeclareMathOperator{\td}{Td}
\DeclareMathOperator{\ch}{ch}
\newtheorem{thm}{Theorem}[section]
\newtheorem{lemma}[thm]{Lemma}
\newtheorem{prop}[thm]{Proposition}
\theoremstyle{definition}
\newtheorem{rem}[thm]{Remark}
\theoremstyle{definition}
\newtheorem{defn}[thm]{Definition}
\newtheorem{example}[thm]{Example}
\newcommand{\be}{\begin{eqnarray}}
\newcommand{\ee}{\end{eqnarray}}
\newcommand{\ov}{\overline}
\newcommand{\wi}{\widetilde}
\newcommand{\var}{\varepsilon}
\numberwithin{equation}{section}
\numberwithin{thm}{section}
\newcommand{\comment}[1]{}
\begin{document}
	
	\title{Equivariant eta forms and equivariant differential K-theory}
	
	\author{Bo Liu}
	
	\address{School of Mathematical Sciences,
		Shanghai  Key Laboratory of PMMP,
		East China Normal University, 
		Shanghai, 200241, 
		P.R. China}
	\email{bliu@math.ecnu.edu.cn}
	
	%\date{\today}
	
	\begin{abstract}
		In this paper, for a compact Lie group action,
		 we prove the anomaly formula and the functoriality
		of the equivariant Bismut-Cheeger eta forms with 
		perturbation operators when the equivariant
		family index vanishes. In order to prove them, we extend the Melrose-Piazza spectral section and its main properties to the equivariant case
		and introduce the equivariant version of the Dai-Zhang higher 
		spectral flow for arbitrary dimensional fibers.
		Using these results, we construct a new analytic model of
		the equivariant differential K-theory for compact manifolds
		when the group action has finite stabilizers only, 
		which modifies the Bunck-Schick
		model of the differential K-theory. This model could also be regarded
		as an analytic model of the differential K-theory for
		compact orbifolds. Especially, we answer a question proposed
		by Bunke and Schick about the well-definedness of 
		the push-forward map.

%		In this paper, we construct an analytic model of equivariant differential K-theory for compact manifolds with almost free action, which is a ring valued functor with the usual properties of a differential extensions of a cohomology theory and could also be regarded as an analytic model of differential K-theory for compact orbifolds. Furthermore, we construct a well-defined  push-forward map in equivariant differential K-theory and prove the properties of it.
%		
%		In order to do these, we extend the Melrose-Piazza spectral section to the equivariant case, introduce the equivariant version of higher spectral flow for arbitrary dimensional fibers and use them to prove the anomaly formula and the functoriality of the equivariant Bismut-Cheeger eta forms.  
	\end{abstract}
	\maketitle

	{\bf Keywords:} Equivariant eta form; equivariant differential K-theory; equivariant spectral section; equivariant higher spectral flow; orbifold.
	
	{\bf 2010 Mathematics Subject Classification: } 58J28, 58J30, 19L50, 19L47, 19K56, 58J20, 58J35.
	
	\tableofcontents

	\setcounter{section}{-1}
	
	\section{Introduction} \label{s00}
			
%	By the de Rham theory, the de Rham cohomology of a smooth manifold can be represented by differential forms, thus getting the global
%	information by means of local data. In a similar way, a generalized differential cohomology theory gives a way to combine the cohomological
%	information with differential geometric objects. An important case is the differential K-theory.

	The differential K-theory is the differential extension of
    the topological K-theory, whose basic idea is to combine the 
    topological K-theory with the
	differential form information.
	It is
	 partly motivated by the study of D-branes in theoretical physics
	 (see e.g., \cite{FH00,Wi98}). 
%	 by Witten \cite{Wi98} in 1998. 
%	 He points out that $D$-branes carry Ramond-Ramond charges that massless Ramond-Ramond fields are differential forms, and that the $D$-brane charges should be understood in terms of K-theory. 
	 Various models of differential K-theory have been given:
	Hopkins-Singer \cite{HS05}, Bunke-Schick \cite{BS09}, Freed-Lott \cite{FL10},  Simons-Sullivan \cite{SS10}, Tradler-Wilson-Zeinalian \cite{TWZ13}, 
Gorokhovsky-Lott \cite{GL18}, etc. For a detailed survey, see \cite{BS12}.
%	We note that in the model of Bunke-Schick \cite{BS09}, the differential K-theory is constructed using the geometric families and Bismut-Cheeger eta forms with taming.
	
	Until now, the equivariant version of the differential K-theory is not well understood yet.
	When the group is finite, the equivariant differential K-theory was studied by Szabo-Valentino \cite{SV10} and Ortiz \cite{Or09}.
	In \cite{BS13}, Bunke and Schick extended their model to the orbifold case using the language of stacks, which could be regarded as a model of the equivariant differential K-theory when the action has finite stabilizers only.
		
	Inspired by the model of Bunke-Schick \cite{BS13},
	as a parallel version, in this paper,
	 we will construct a purely analytic model of the equivariant differential K-theory for compact manifolds when the action has finite stabilizers only, using the local index technique developed in \cite{BL92}. Moreover, a detailed proof of the well-definedness of
	the push-forward map is given here, which is a question proposed in \cite{BS09,BS13} and is the main motivation for this new construction. This model is a direct generalization of \cite{BS09} without using the language of stacks and could also be regarded as an analytic model of the differential K-theory for compact orbifolds.
%	Note that when restricted to the non-equivariant case, our construction is a little different from that in \cite{BS09} although they are isomorphic. We use the analytic tools: spectral sections and higher spectral flows instead of the taming and Kasporov KK-theory in \cite{BS09}.
		
	The study of the differential K-theory is always related to the
	Bismut-Cheeger eta form \cite{BC89}, which is defined for a family 
	of Dirac operators and is the family 
	extension of the 
	famous Atiyah-Patodi-Singer eta invariant. 
	Usually, the well-definedness of the eta form needs one of the following 
	additional conditions: 
	\begin{enumerate}
	\item the kernels of the 
	family of Dirac operators form a vector bundle over the base 
	manifold \cite{BC89,D91};
	\item the family index of the family of Dirac operators vanishes
	as an element of the K-group of the base manifold \cite{Bu09,MP97a,MP97b}.
	\end{enumerate} 

   In the model of Freed-Lott \cite{FL10}, the eta form with the first condition
    is used to define the push-forward map. 
    In
    the model of Bunke-Schick \cite{BS09}, the eta form 
    under the second condition, which is defined by Bunke \cite{Bu09}
    using the taming, is used to define the differential K-group.
%    
%    In \cite{Bu09}, under the second condition, Bunke
%    defined his $\eta$-form using the taming and use it
%     to define the differential $K$-group in
%    the model of Bunke-Schick \cite{BS09}. 
    From this point of view,
    in order to extend the differential K-theory to the equivariant case,
    we firstly need to extend the Bismut-Cheeger eta form
    %and its main properties, the anomaly formula and the functoriality, 
    to the equivariant case.
    
    In \cite{Liu17}, the author systematically studied the equivariant
    eta form under the first condition and prove the anomaly formula
    and the functoriality of them, which should be used to establish 
    an equivariant version of the Freed-Lott model. In this paper,
    we will establish the properties of the eta form 
    %based on \cite{MP97a,MP97b}
    under the second condition, extend them
    to the equivariant case and use them to construct our model.
    In order to do finer spectral analysis, we use the notion
    of the spectral section developed by Melrose and Piazza in
    \cite{MP97a,MP97b} and the 
    Dai-Zhang higher spectral flow \cite{DZ98} instead of the taming
    and the Kasporov KK-theory in \cite{Bu09,BS09}.
    
    %But our definition is .
    
%		
%	The key tools in our model are the equivariant Bismut-Cheeger eta forms \cite{BC89} with the equivariant version of Melrose-Piazza spectral sections \cite{MP97a,MP97b} and Dai-Zhang higher spectral flows \cite{DZ98}.
	
    In \cite{MP97a,MP97b}, in order to prove the family index theorem for manifolds with boundary, Melrose and Piazza defined the spectral section 
    and the eta form under the second condition.
%    for a
%	family of self-adjoint (resp. odd $\Z_2$-graded self-adjoint) 1st-order elliptic pseudodifferential operators
%	on a family of odd (resp. even) dimensional manifolds 
	%when the family index of the operators vanishes.
	%and define the $\eta$-form.
	In \cite{DZ98}, using the spectral section, Dai and Zhang introduced the higher spectral flow for a family of Dirac type operators
	on a family of odd dimensional manifolds. 	
	In this paper, we will extend the spectral section, the higher spectral flow and the eta form to the equivariant case. 
	Especially, we will define the higher spectral flow for a family of even dimensional manifolds. Furthermore, we will prove the anomaly formula and the functoriality of equivariant eta forms using the language of equivariant higher spectral flow, which is an analogue of the results in \cite{Be09,BM04,Liu17} and using the techniques in \cite{D91,Ma99,Ma00,Ma02}. Note that our proof of the anomaly formula of the eta forms for a family of even dimensional manifolds relies on the
	 funtoriality of equivariant eta forms Theorem \ref{d188}, which is highly nontrivial and is the main technical difficulty of this paper. %It is a question proposed in \cite{BS13}.
	 Since the second condition is a topological condition,
	 there is no additional rigidity assumption in the formulas here.
	 % as in \cite{Liu17}.

	Let $\pi: W\rightarrow B$ be a proper smooth submersion of  compact manifolds
	with orientable fibers $Z$. Let $TZ=\ker(d\pi)$ be the relative tangent bundle
	to the fibers $Z$ with Riemannian metric $g^{TZ}$ and $T^HW$ be a horizontal subbundle of $TW$, such that $TW=T^HW\oplus TZ$. Let $o$ be an orientation of $TZ$. 
	Let $\nabla^{TZ}$ be the Euclidean connection on $TZ$ defined in (\ref{e01014}).
	We assume that $TZ$ has a Spin$^c$ structure. Let $L_Z$ be the complex line bundle associated with the Spin$^c$ structure of $TZ$ with a Hermitian metric $h^{L_Z}$ and a Hermitian connection $\nabla^{L_Z}$.
	Let $(E,h^E)$ be a $\Z_2$-graded Hermitian vector bundle with a Hermitian connection $\nabla^E$.
	Let $G$ be a compact Lie group which acts on $W$ and $B$ such that $\pi\circ g=g\circ \pi$ for any $g\in G$. We assume that the $G$-action preserves everything.
	 The family of $G$-equivariant geometric data $\mF=(W, L_Z, E, o, T_{}^HW, g^{TZ}, h^{L_Z}, \nabla^{L_Z}, h^E, \nabla^{E})$ is enough to define
	 the equivariant Bismut superconnection. We call $\mF$ an equivariant geometric family over $B$ for short. Let $D(\mF)$ be the fiberwise Dirac operators of $\mF$ defined in (\ref{e01029}). 
	 Let $K_G^i(B)$, $i=0,1$, be the equivariant K-group of $B$.
	 Then the family index map $\ind(D(\mF))\in K_G^*(B)$, where $*=0$ or $1$ corresponds to the even or odd dimensions of fibers $Z$.
	 
	 Let $\mathrm{F}_G^0(B)$ (resp. $\mathrm{F}_G^1(B)$) be the set of equivalence classes of isomorphic equivariant geometric families such that the dimensions of all fibers are even (resp. odd). We denote by $\mF\sim \mF'$ if $\ind(D(\mF))=\ind(D(\mF'))$. The following proposition is proved in \cite{BS13}. 
	 %It could be regarded as a geometric version of the equivariant Atiyah-J\"anich theorem.
	 
	 \begin{prop}\label{D01}
	  There is a ring isomorphism
	 	\begin{align}\label{D02}
	 	\mathrm{F}_G^*(B)/\sim\ \simeq K_G^*(B).
	 	\end{align}
	 \end{prop}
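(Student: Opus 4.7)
My plan is to show that the map $[\mF]\mapsto \ind(D(\mF))$ is the desired ring isomorphism. Well-definedness on the quotient and injectivity are tautological from the definition of $\sim$: the family index is invariant under isomorphism of equivariant geometric families, and the relation $\sim$ is designed precisely so that $\ind$ separates classes. The ring structure on $\mathrm{F}_G^*(B)$ is given by disjoint union of families (for addition) and by fiber product over $B$ combined with the external $\Z_2$-graded tensor product of the auxiliary bundles $E$ (for multiplication); standard multiplicativity of the family index then upgrades $\ind$ to a ring homomorphism.

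The substance is surjectivity. In the even case ($*=0$), any element of $K_G^0(B)$ may be written as $[E_+]-[E_-]$ for a pair of $G$-equivariant Hermitian bundles $E_\pm$ on $B$. I would realize it by the family with $W=B$, $0$-dimensional fibers and $E=E_+\oplus E_-$ with its natural $\Z_2$-grading: the fiberwise Dirac operator is identically zero and its equivariant family index is exactly $[E_+]-[E_-]\in K_G^0(B)$. For the odd case ($*=1$), I would invoke the equivariant suspension isomorphism $K_G^1(B)\simeq \widetilde{K}_G^0(B\times S^1)$ (with $G$ acting trivially on $S^1$) to represent a class by a graded $G$-equivariant bundle $F_+\oplus F_-$ on $B\times S^1$ trivialized near $B\times\{*\}$. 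The resulting geometric family over $B$ has fiber $S^1$ with its standard Spin$^c$ structure and auxiliary bundle $F_+\oplus F_-$; the corresponding family of twisted $S^1$-Dirac operators has equivariant family index in $K_G^1(B)$ recovering the chosen class.

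I expect the main technical issue to be surjectivity in the odd case, together with a careful check that the product on families matches the product in equivariant $K$-theory (including sign and grading conventions when pairing an even family with an odd family). The almost free hypothesis is used throughout to guarantee that the quotient behaves like a closed orbifold and that all analytic constructions descend well to it. The complete argument in the stack-theoretic formulation is carried out in \cite{MR3182258}; the outline above is its direct analytic translation in the setting of manifolds with almost free $G$-action.
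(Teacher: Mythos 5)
Your proof is correct and follows essentially the same route as the paper's: identify $\mathrm{F}_G^*(B)/\!\sim$ with $K_G^*(B)$ via $[\mF]\mapsto\ind(D(\mF))$, note that injectivity and well-definedness are built into the definition of $\sim$, observe that the ring structure is transported by the additivity and multiplicativity of the family index, and then argue surjectivity separately in each degree. Your even case is identical to the paper's (zero-dimensional fibers, $Z=\mathrm{pt}$, auxiliary bundle $E_+\oplus E_-$, index $[E_+]-[E_-]$). Your odd case is a mild variant: the paper invokes Phillips' theorem to write a $K_G^1$-class as a $G$-invariant unitary $F\in\End(V)\otimes\cC^0(B,\C)$ for a finite-dimensional $G$-representation $V$, builds $W$ over $B\times S^1$ by the clutching construction together with the trivial comparison $U=B\times S^1\times V$, and realizes the class by the $S^1$-fibered geometric family with $\Z_2$-graded bundle $W\oplus U$; you instead go directly through the suspension isomorphism $K_G^1(B)\simeq\ker(i^*:K_G^0(B\times S^1)\to K_G^0(B))$ and represent the class by a graded bundle over $B\times S^1$. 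Both are standard and land in the same place; Phillips' unitary picture is what guarantees in the equivariant setting that a finite-dimensional $V$ suffices, and your route implicitly relies on the analogous stabilization fact for equivariant bundles on $B\times S^1$.

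Two small corrections. First, your description ``trivialized near $B\times\{*\}$'' is not quite what membership in $\ker i^*$ gives; it gives that the two bundles become stably isomorphic over $B\times\{*\}$, not that either is trivial there. Second, the assertion that ``the almost free hypothesis is used throughout'' is wrong for this particular proposition: Proposition \ref{D01}/\ref{d128} is proved in Section 1 with no assumption on the $G$-action on $B$; the almost free hypothesis is only introduced in Section 3, where it is needed for the delocalized cohomology and the Chern character isomorphism underlying the differential refinement.
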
 
	 	
		Let $D$ be a family of first-order pseudodifferential operators on the fibers of $\mF$, which is
		self-adjoint, fiberwise elliptic and commutes with the $G$-action. Furthermore, 
		if $\mF\in \mathrm{F}_G^1(B)$, we assume that $D$ preserves
		the $\Z_2$-grading of $E$; if $\mF\in \mathrm{F}_G^0(B)$,
		we assume that $D$ anti-commutes with the $\Z_2$-grading
		of $\mS(TZ,L_Z)\widehat{\otimes}E$, where
		$\mS(TZ,L_Z)$ is the spinor with respect to the 
		Spin$^c$ structure of $TZ$.
%		if $\mF$ is even (resp. odd), we assume that $D$ anti-commutes (resp. commutes) with the corresponding $\Z_2$-grading.
		As in \cite{DZ98}, we call such $D$ an equivariant $B$-family on $\mF$ (see Definition \ref{d199}). If $\ind(D)=0\in K_G^{*}(B)$ and at least one component of the fiber has nonzero dimension, there exists an equivariant spectral section $P$ (see Definition \ref{d003}) and a family of smoothing operators $A_P$ associated with $P$, such that $D+A_P$ is an invertible equivariant $B$-family (see Proposition \ref{d006}). Let $P$, $Q$ be equivariant spectral sections, we could define the difference $[P-Q]\in K_G^{*}(B)$ (see (\ref{d009}) and (\ref{d138})).
	
		Let $\mF, \mF'\in \mathrm{F}_G^1(B)$ (resp. $\mathrm{F}_G^0(B)$) which have the same topological structure, that is, the only differences between them are horizontal subbundles, metrics and connections. Let $D_0$, $D_1$ be two equivariant $B$-families on $\mF$, $\mF'$ respectively.
		Let $Q_0$, $Q_1$ be equivariant spectral sections of $D_0$, $D_1$ respectively. We define the equivariant higher spectral flow $\mathrm{sf}_G\{(D_0, Q_0), (D_1, Q_1)\}$ between the pairs
		$(D_0, Q_0)$ and $(D_1, Q_1)$ to be an element in $K_G^0(B)$ (resp. $K_G^1(B)$) in Definitions \ref{d152} and \ref{d15}. Note that when $\mF$ is odd, it is the direct extension of the Dai-Zhang higher spectral flow in \cite{DZ98}; when $\mF$ is even, it is defined by adding an additional dimension.
	
	Moreover, besides the equivariant geometric family, we could also represent the elements of equivariant K-group as equivariant higher spectral flows
	(see Proposition 2.7).	
%	\begin{prop}\label{D05}
%		For any $x\in K_G^0(B)$ (resp. $K_G^1(B)$),
%		there exists $\mF\in \mathrm{F}_G^1(B)$ (resp. $\mathrm{F}_G^0(B)$) and equivariant spectral sections $P$, $Q$ with respect to $D(\mF)$, such that  $[P-Q]=x$.
%	\end{prop}	
	From this point of view, the equivariant higher spectral flow here is the same as the term $\ind((\mE\times I)_{bt})$ in \cite[2.5.8]{BS13}, which 
	is studied using the KK-theory there. This enable us to replace the techniques of KK-theory in \cite{BS13} by that of equivariant higher spectral flow, which is purely analytic.
	
		Let $D$ be an equivariant $B$-family on $\mF$. A perturbation operator with respect to $D$ is a family of bounded pseudodifferential operators $A$ such that $D+A$ is an invertible equivariant $B$-family on $\mF$, which is a generalization of $A_P$.		
	Note that if at least one component of the fibers of $\mF$ has nonzero dimension, a perturbation operator exists with respect to $D$ if and only if $\ind D=0\in K_G^*(B)$. 	
		
	If the $G$-action on $B$ is trivial, for any $g\in G$, we define the equivariant eta form $\tilde{\eta}_g(\mF, A)
	\in \Omega^*(B,\C)/d\Omega^*(B,\C)$ with respect to a perturbation operator $A$ in Definition \ref{d017}. If the equivariant geometric families $\mF$ and $\mF'$ have the same topological structure,  we prove the anomaly formula as follows.
	
	\begin{thm}\label{D06}
		Assume that the $G$-action on $B$ is trivial.
		Let $\mF$, $\mF'\in \mathrm{F}_G^*(B)$ which have the same topological structure.	Let $A$, $A'$ be perturbation operators with respect to $D(\mF)$, $D(\mF')$
		and $P$, $P'$ be the APS projections onto the eigenspaces of the positive spectrum of $D(\mF)+A$, $D(\mF')+A'$ respectively.
		For any $g\in G$, modulo exact forms on $B$, we have
		\begin{multline}\label{D07}
		\tilde{\eta}_g(\mF', A')-\tilde{\eta}_g(\mF, A)=\int_{Z^g}\widetilde{\td}_g(\nabla^{TZ},\nabla^{L_Z},\nabla^{' TZ},  \nabla^{'L_Z} )
		\, \ch_g(E, \nabla^{E})
		\\
		+\int_{Z^g}\td_g(\nabla^{'TZ}, \nabla^{'L_Z} )\, \widetilde{\ch}_g(\nabla^{E},\nabla^{'E})
		+
		\ch_g\left(\mathrm{sf}_G\{(D(\mF)+A, P), (D(\mF')+A', P')\}\right),
		\end{multline}
		where $Z^g$ is the fixed point set of $g$ on the fibers $Z$ and
		the characteristic forms $\ch_g(\cdot)$, $\td_g(\cdot)$ 
		and the Chern-Simons forms $\widetilde{\ch}_g(\cdot)$, $\widetilde{\td}_g(\cdot)$ are defined in Section 2. 
	\end{thm}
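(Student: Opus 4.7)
The strategy is the standard double-transgression for eta forms, combined with the equivariant higher spectral flow constructed earlier in the paper. I would first form the interpolating fibration $\hat{\pi}\colon W \times I \to B \times I$ with $I=[0,1]$ and trivial $G$-action on $I$, and linearly interpolate the geometric data $(T^HW, g^{TZ}, h^{L_Z}, \nabla^{L_Z}, h^E, \nabla^E)$ between those of $\mF$ and $\mF'$. This produces an equivariant geometric family $\hat{\mF}$ over $B \times I$ whose fiberwise Dirac operator $D(\hat{\mF})$ restricts to $D(\mF)$ at $s=0$ and $D(\mF')$ at $s=1$, and whose interpolated curvatures, after integration along $I$, yield precisely the Chern--Simons forms $\widetilde{\td}_g$ and $\widetilde{\ch}_g$ appearing in \eqref{D07}.

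The core analytic input is the local equivariant family index theorem for eta forms: for any perturbation $\hat{A}$ making $D(\hat{\mF}) + \hat{A}$ invertible, the equivariant eta form on $B \times I$ satisfies
\begin{equation*}
d \tilde{\eta}_g(\hat{\mF}, \hat{A}) \equiv \int_{Z^g} \td_g(\nabla^{T\hat{Z}}, \nabla^{L_{\hat{Z}}}) \wedge \ch_g(\nabla^{E}) \mod \text{exact forms},
\end{equation*}
since the fixed-point set is unchanged under the trivial $G$-action on $I$. Decomposing $d = d_B + ds\, \partial_s$ and integrating along $I$ gives on the left $\tilde{\eta}_g(\mF',A') - \tilde{\eta}_g(\mF,A)$ modulo $d_B$-exact terms, while the right-hand side, through the Chern--Simons construction applied to the linear interpolation of connections, produces the first two terms of \eqref{D07}.

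The essential difficulty, and the reason the third term appears, is that in general no single perturbation $\hat{A}$ exists over all of $B \times I$ restricting to $A$ and $A'$ at the endpoints and keeping $D(\hat{\mF}) + \hat{A}$ invertible throughout; the obstruction is precisely $\mathrm{sf}_G\{(D(\mF)+A, P), (D(\mF')+A', P')\}$. To handle this I would subdivide $I$ into sub-intervals $[s_i, s_{i+1}]$ on each of which an invertible equivariant perturbation $\hat{A}_i$ exists (using the equivariant spectral sections from Proposition \ref{d006}), apply the transgression argument on each piece, and telescope the contributions at the partition points. These mismatches are measured by pairs of equivariant spectral sections $[P_i - P_{i+1}] \in K_G^\ast(B)$, whose sum equals the equivariant higher spectral flow by construction, and whose combined Chern character contributes the final term $\ch_g(\mathrm{sf}_G\{\ldots\})$ in \eqref{D07}.

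The main obstacle is making this last step rigorous: one must verify that $\ch_g$ of the equivariant higher spectral flow is well-defined as a differential form modulo exact forms, independent of the intermediate spectral sections chosen, and that the telescoping indeed recovers the claimed class. This rests on the equivariant refinement of Melrose--Piazza spectral sections together with a variational argument in the spirit of \cite{MR2072502, Liuetaform, MR1088332, MR1765553, MR1800127, MR1942300}; in the non-equivariant limit it reduces to the formalism of Dai--Zhang. A secondary technical point is ensuring smooth $s$-dependence of $\tilde{\eta}_g(\hat{\mF}, \hat{A})$ on each sub-interval and uniform estimates in $s$ for the local equivariant index theorem, which follow from the Bismut superconnection rescaling combined with Berline--Vergne localization at $Z^g$.
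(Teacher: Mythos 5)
Your proposal captures the right strategy for the \emph{odd} case (Proposition~\ref{d022} in the paper): interpolate the geometric data over $[0,1]\times B$, transgress the local family index formula, and correct by the higher spectral flow. Two remarks on that part. First, the paper does not need to subdivide $I$: since the index class vanishes and is homotopy invariant, Proposition~\ref{d006}(i) furnishes a \emph{single} total equivariant spectral section $\widetilde P=\{P_r\}$ over the whole of $\widetilde B=[0,1]\times B$, so the variational formula \eqref{d025} comes in one stroke and the spectral-flow correction is produced at the endpoints by comparing $P_0,P_1$ with $P,P'$ via Lemma~\ref{d106}. Your telescoping over a partition of $I$ would eventually land in the same place, but it replaces one use of the existence theorem by several and needs an explicit argument that intermediate choices cancel; the paper avoids this. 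Second, the content of Lemma~\ref{d106} (that exchanging spectral sections changes the eta form by $\ch_g([P-Q])$) is not merely a tautology about well-definedness of the Chern character of the spectral flow --- it is proved by an explicit large-$u$ asymptotic analysis of the rescaled superconnection heat kernel (\eqref{d110}--\eqref{d108}); you should not treat it as a formal verification step.

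The more serious gap is that your proposal handles the even and odd cases uniformly, whereas the paper explicitly cannot. Lemma~\ref{d106} and Proposition~\ref{d022} are stated and proved only for $\mF\in\mathrm{F}_G^1(B)$. When $\mF\in\mathrm{F}_G^0(B)$, the equivariant higher spectral flow itself is defined (Definition~\ref{d15}) by first tensoring with the circle family $\mF^L$ of Example~\ref{d129}~c), d), and there is no direct analogue of Lemma~\ref{d106}. Consequently, Theorem~\ref{d176} in the even case is proved by the reduction
$p_1!(p_1^*\mF\cup p_2^*\mF^L)=\mF\cup\mF_0$
in \eqref{d210}, followed by Lemma~\ref{d180} and, crucially, the functoriality Lemma~\ref{d030}, which lets one trade $\tilde\eta_g(\mF,A)$ for $\int_{S^1}\tilde\eta_g(p_1^*\mF\cup p_2^*\mF^L,A\widehat{\otimes}1)$ modulo a Chern--Simons term. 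This functoriality is the hard analytic input of the paper (its proof occupies Sections 2.5--2.6), and the introduction flags exactly this: the even-case anomaly formula uses a special case of functoriality that is highly non-trivial. Your transgression-and-telescope outline never invokes anything like Lemma~\ref{d030} and so has no mechanism to relate the $S^1$-thickened family on which the even higher spectral flow lives back to $\mF$ itself. As written it would go through only for $\mF,\mF'\in\mathrm{F}_G^1(B)$.
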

	
	Note that when $\mF$, $\mF'\in \mathrm{F}_G^0(B)$, the proof of the anomaly formula relies on a special case of functoriality of equivariant eta forms. %which is highly nontrivial. 
	
	If $B=\mathrm{pt}$, $\mF\in \mathrm{F}_G^1(\mathrm{pt})$,
	taking $A=P_{\ker D}$, the orthogonal projection onto the kernel
	of $D(\mF)$, the equivariant eta form here is just the
	classical reduced equivariant APS eta invariant. Using 
	Theorem \ref{D06}, we could write the equivariant spectral flow 
	term in the anomaly formula of eta invariants explicitly. 
	
%	Remark that if $B=\mathrm{pt}$ or $G=\{e\}$, a version of Theorem \ref{D06} could 
%	be naturally derived by the equivariant APS index theorem \cite{Do78}
%	or the family APS index theorem \cite{MP97a,MP97b}. We could expect that
%	the equivariant version of the family APS index theorem holds and then
%	Theorem \ref{D06} will be a corollary of it.
	
	Let $\pi:V\rightarrow B$ be an equivariant surjective
	 proper submersion with compact orientable  fibers $Y$. We assume that $B$ is compact, $G$ acts trivially on $B$ and $TY$ is equivariant Spin$^c$. Let $\mF_X=(W, L_X, E, o_X, T_{\pi_X}^HW, g^{TX}, h^{L_X}, \nabla^{L_X}, h^E, \nabla^{E})$ be an equivariant 
	geometric family over $V$ for an equivariant surjective proper submersion $\pi_X:W\to V$ with compact orientable fibers $X$ (see (\ref{d155})). Then 
	$\pi_Z:=\pi_Y\circ\pi_X:W\to B$ is an equivariant proper 
	submersion with compact orientable fibers $Z$.
	We could obtain a new equivariant geometric family $\mF_Z$ over $B$ in (\ref{d073}). 
	For any $g\in G$, let $Y^g$ and $Z^g$ be the fixed point sets of $g$ on the fibers $Y$ and $Z$ respectively.
	We obtain the functoriality of equivariant eta forms.
	
	\begin{thm}\label{D08}
		Let $A_Z$ and $A_X$ be perturbation operators with respect to $D(\mF_Z)$ and $D(\mF_X)$.
		Then there exists $T_0\geq 1$, such that for any $T\geq T_0$ and any $g\in G$, modulo exact forms on $B$, we have
		\begin{multline}\label{D09}
		\widetilde{\eta}_g(\mF_{Z}, A_{Z})=\int_{Y^g}\td_g(\nabla^{TY}, \nabla^{L_Y})\, \widetilde{\eta}_g(\mF_{X},A_X)
		\\
		+\int_{Z^g}\widetilde{\td}_g(\nabla^{TY,TX},\nabla^{L_Z},\nabla^{TZ},  \nabla^{L_Z} )
		\, \ch_g(E, \nabla^{E})
		\\
		+ \ch_g(\mathrm{sf}_G\{(D(\mF_{Z,T})+ 1\widehat{\otimes}TA_X, P), (D(\mF_Z)+A_Z, P') \}),
		\end{multline}
		where $\mF_{Z,T}$ is the equivariant geometric family defined in (\ref{d028}), $\nabla^{TY,TX}$ is defined in (\ref{e02114}) and $P$, $P'$ are the associated APS projections respectively.
	\end{thm}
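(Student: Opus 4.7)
The plan reduces Theorem \ref{D08} to a two-step comparison: apply the anomaly formula of Theorem \ref{D06} to bridge $\widetilde{\eta}_g(\mF_Z,A_Z)$ and $\widetilde{\eta}_g(\mF_{Z,T}, 1\widehat{\otimes}TA_X)$, and then identify the latter, for $T$ large, with $\int_{Y^g}\td_g(\nabla^{TY},\nabla^{L_Y})\wedge \widetilde{\eta}_g(\mF_X,A_X)$ via an adiabatic-limit argument. Since $\mF_Z$ and $\mF_{Z,T}$ share the same underlying topological data and the same bundle $(E,\nabla^E)$, differing only by a rescaling of the horizontal subbundle and metric in the $Y$-direction, Theorem \ref{D06} kills the $\widetilde{\ch}_g$-contribution and yields, modulo exact forms,
\begin{equation*}
\widetilde{\eta}_g(\mF_{Z,T},1\widehat{\otimes}TA_X)-\widetilde{\eta}_g(\mF_Z,A_Z)=\int_{Z^g}\widetilde{\td}_g(\nabla^{TZ},\nabla^{L_Z},\nabla^{TY,TX},\nabla^{L_Z})\wedge \ch_g(E,\nabla^E)+\ch_g(\mathrm{sf}_G\{\cdot\}),
\end{equation*}
with the same spectral-flow term as in \eqref{D09}. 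Subtracting the adiabatic-limit identity from this produces \eqref{D09}.

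For the adiabatic-limit identity I would follow the Bismut-Cheeger strategy and its extensions \cite{MR1088332,MR1765553,MR1800127,MR1942300,Liuetaform}. Under the rescaling $g^{TY}\mapsto T^{-2}g^{TY}$ encoded in $\mF_{Z,T}$, the vertical Dirac operator decouples, in the limit, into the fiberwise $X$-Dirac operator of $\mF_X$ plus the pulled-back $Y$-Dirac operator from the base; the coefficient $T$ in front of $A_X$ is precisely the scaling that makes the perturbation homogeneous in $T$. Substituting $u\mapsto u/T^2$ in the $u$-integral defining $\widetilde{\eta}_g(\mF_{Z,T},1\widehat{\otimes}TA_X)$ and splitting it into the regions $(0,\varepsilon]$, $[\varepsilon,T^{1-\delta}]$, $[T^{1-\delta},\infty)$, the middle region is the dominant one: equivariant Getzler rescaling localizes the supertrace to $Z^g$, the adiabatic factorization further localizes to $Y^g$ and decouples the $X$- and $Y$-factors, and the integral over $Y^g$ of the local Todd form against the residual $X$-integrand reproduces $\int_{Y^g}\td_g(\nabla^{TY},\nabla^{L_Y})\wedge \widetilde{\eta}_g(\mF_X,A_X)$. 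The small-$u$ and large-$u$ contributions are shown to be exact or negligible modulo exact forms.

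The main obstacle is the uniform-in-$T$ heat-kernel analysis underpinning this decomposition. Combining Getzler rescaling at $Z^g$ with the adiabatic factorization along the $Y$-direction requires finite-propagation and Duhamel estimates with constants uniform in $T$ and in $g\in G$, and the equivariance of the rescaling must be tracked at every stage. In the large-$u$ region one must establish uniform invertibility of $D(\mF_{Z,T})+1\widehat{\otimes}TA_X$ with exponentially decaying resolvent; because $A_X$ is nonlocal, being a smoothing operator coming from an equivariant spectral section, a purely local elliptic argument fails and one must exploit the specific structure of spectral-section perturbations developed earlier in the paper. Once these uniform bounds are secured, invertibility at both endpoints ensures that $\ch_g(\mathrm{sf}_G\{\cdot\})$ is well-defined and locally constant in $T$ beyond a threshold, which is precisely the content of the hypothesis ``for $T\geq 1$ large enough'' in the statement.
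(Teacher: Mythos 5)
Your overall strategy is the same as the paper's: reduce Theorem \ref{D08} to (a) the anomaly formula Theorem \ref{D06} applied between $\mF_Z$ and $\mF_{Z,T}$, and (b) a "functoriality" lemma identifying $\widetilde{\eta}_g(\mF_{Z,T}, 1\widehat{\otimes}TA_X)$ with the base/fiber expression. However, \emph{both} of your intermediate formulas are individually incorrect, and you get \eqref{D09} only because the two mistakes cancel.

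Concretely, the Euclidean connection of the geometric family $\mF_{Z,T}$ is $\nabla^{TZ}_T$, the connection \eqref{e01014} built from the rescaled metric $g_T^{TZ}$, \emph{not} the direct-sum connection $\nabla^{TY,TX}$ defined in \eqref{e02114}. The latter is only the $T\to\infty$ limit of the former (Proposition \ref{e03047}). Therefore Theorem \ref{D06} applied to $\mF_Z$ versus $\mF_{Z,T}$ gives the Chern--Simons form
$\int_{Z^g}\widetilde{\td}_g(\nabla^{TZ},\nabla^{L_Z},\nabla^{TZ}_T,\nabla^{L_Z})\wedge\ch_g(E,\nabla^E)$, and not the form you wrote. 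Correspondingly, your "adiabatic-limit identity"
$\widetilde{\eta}_g(\mF_{Z,T},1\widehat{\otimes}TA_X)=\int_{Y^g}\td_g(\nabla^{TY},\nabla^{L_Y})\wedge\widetilde{\eta}_g(\mF_X,A_X)$
at a fixed finite $T$ is false: already the differentials of the two sides disagree, since by \eqref{d019} $d$ of the left side is $\int_{Z^g}\td_g(\nabla^{TZ}_T,\nabla^{L_Z})\wedge\ch_g(E,\nabla^E)$ while $d$ of the right side is $\int_{Z^g}\td_g(\nabla^{TY,TX},\nabla^{L_Z})\wedge\ch_g(E,\nabla^E)$. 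The correct statement (the paper's Lemma \ref{d030}) carries the missing Chern--Simons correction
$-\widetilde{\mathrm{FLI}}_g(\nabla^{TZ}_T,\nabla^{L_Z},\nabla^{TY,TX},\nabla^{L_Z})$. When you combine the \emph{correct} versions of the two steps via the transitivity of equivariant Chern--Simons forms, the $\nabla^{TZ}_T$ terms merge into $\widetilde{\td}_g(\nabla^{TZ},\nabla^{L_Z},\nabla^{TY,TX},\nabla^{L_Z})$ and \eqref{D09} follows; your version skips this recombination because it silently conflates $\nabla^{TZ}_T$ with $\nabla^{TY,TX}$.

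A further, more minor, methodological point: the paper proves its key Lemma \ref{d030} by a two-parameter Stokes argument on a rectangular contour $\Gamma_{\var,A,T_0}$ in the $(T,u)$-plane, whose four sides produce the two eta forms, the Chern--Simons correction, and a vanishing term, with the limits $A\to\infty$, $T_0\to\infty$, $\var\to 0$ taken in that order. Your proposed single-variable decomposition (rescale $u\mapsto u/T^2$ and split into three $u$-regions at fixed large $T$) is in the spirit of classic Bismut--Cheeger adiabatic limits, but it is precisely the kind of argument the paper avoids; if you carry out the small-$u$ analysis carefully you will find the Todd form built from $\nabla^{TZ}_T$, not from $\nabla^{TY,TX}$, which is the source of the correction term you are missing. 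Your observation about uniform invertibility making $\ch_g(\mathrm{sf}_G\{\cdot\})$ locally constant in $T$ for $T$ large is correct and is what justifies the hypothesis "for $T\geq 1$ large enough."
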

	
	In the last section, inspired by \cite{BS09,BS13,Or09}, we use the results above to define the equivariant differential K-theory for the compact manifolds when $G$
	acts with finite stabilizers and study the properties of it.
	
	Essential to our definition is that when the group action has finite stabilizers, $K_G^*(B)\otimes \R$ is isomorphic to the delocalized cohomology $H_{deloc, G}^*(B,\R)$ defined in (\ref{d102}), which is the cohomology of complex $(\Omega_{deloc, G}^*(B,\R), d)$ of differential forms on the disjoint union of the fixed point set of a representative element in the conjugacy classes. Furthermore, we could define $\widetilde{\eta}_G(\mF, A)\in \Omega_{deloc, G}^*(B,\R)/\Im d$ (see Definition \ref{d122}) when $G$ acts with finite stabilizers on $B$.
	
	A cycle for an equivariant differential K-theory class over $B$
	is a pair $(\mF, \rho)$, where $\mF\in \mathrm{F}_G^*(B)$ and $\rho\in \Omega^{*}_{deloc, G}(B,\R)/\Im \,d$.
	The cycle $(\mF, \rho)$ is called even (resp. odd) if $\mF$ is even (resp. odd) and
	$\rho\in \Omega^{\mathrm{odd}}_{deloc, G}(B,\R)/\Im \,d$ (resp. $\rho\in \Omega^{\mathrm{even}}_{deloc, G}(B,\R)/\Im \,d$).
	Two cycles ($\mF, \rho$) and ($\mF', \rho'$) are called isomorphic if $\mF$ and $\mF'$ are isomorphic
	and $\rho=\rho'$. 
	Let $\widehat{\mathrm{IC}}_G^0(B)$ (resp. $\widehat{\mathrm{IC}}_G^1(B)$) denote the set of isomorphism classes of even (resp. odd) cycles over $B$. 
	Let $\mF^{\mathrm{op}}$ be the equivariant geometric family reversing 
	the $\Z_2$-grading of $E$ in $\mF$, which implies that $\ind(D(\mF^{\mathrm{op}}))=-\ind(D(\mF))$.
		We call two cycles ($\mF, \rho$) and ($\mF', \rho'$) paired if \begin{align}\label{eq:0.04}
		\ind(D(\mF))=\ind(D(\mF')),
		\end{align}
		and there exists
	a perturbation operator $A$ with respect to $D(\mF+\mF^{'\mathrm{op}})$ such that
	\begin{align}\label{D10}
	\rho-\rho'= \widetilde{\eta}_G(\mF+\mF^{'\mathrm{op}}, A).
	\end{align}
	Note that from (\ref{eq:0.04}),  $\ind(\mF+\mF^{'\mathrm{op}})
	=0\in K_G^*(B)$. So $\widetilde{\eta}_G(\mF+\mF^{'\mathrm{op}}, A)$ is well defined.
	Let $\sim$ denote the equivalence relation generated by the relation "paired".
	
	\begin{defn}\label{D11}(Compare with \cite[Definition 2.14]{BS09})
		The equivariant differential K-theory $\widehat{K}_G^0(B)$ (resp. $\widehat{K}_G^{1}(B)$) is the group completion
		of the abelian semigroup $\widehat{\mathrm{IC}}_G^{0}(B)/\sim$ (resp. $\widehat{\mathrm{IC}}_G^{1}(B)/\sim$).
	\end{defn}
	
	Let $\pi_Y: V\rightarrow B$ be an equivariant proper submersion of compact smooth $G$-manifolds with compact fibers $Y$ such that $TY$ is oriented and equivariant Spin$^c$.
	We assume that the $G$-action on $B$ has only finite stabilizers. 
	Thus, so is the action on $V$. As in \cite{BS09}, we define the equivariant differential K-orientation with respect to $\pi_Y$ in Definition \ref{d081} and the map $\widehat{\pi}_Y!:\widehat{\mathrm{IC}}_G^{*}(V)\rightarrow \widehat{\mathrm{IC}}_G^{*}(B)$ in (\ref{d082}). Then using Theorems \ref{D06} and \ref{D08}, we prove that
	
	\begin{thm}
		The map $\widehat{\pi}_Y!:\widehat{K}_G^*(V)\rightarrow \widehat{K}_G^*(B)$ is well-defined.
	\end{thm}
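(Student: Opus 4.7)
The plan is to reduce well-definedness to two checks at the cycle level on $\widehat{\mathrm{IC}}_G^*(V)$: (a) independence of the auxiliary choices in the definition of $\widehat{\pi}_Y!$, most importantly the perturbation operator used to produce the eta-form correction term; and (b) compatibility with the generating ``paired'' relation of $\sim$. Both together imply that $\widehat{\pi}_Y!$ descends to a semigroup homomorphism $\widehat{\mathrm{IC}}_G^*(V)/{\sim}\to\widehat{\mathrm{IC}}_G^*(B)/{\sim}$, hence to a well-defined group homomorphism on the completions. So the core task reduces to the following: given paired cycles $(\mF_1,\rho_1)$ and $(\mF_2,\rho_2)$ over $V$, produce a perturbation operator $A'$ on $\pi_Y^!\mF_1+(\pi_Y^!\mF_2)^{\mathrm{op}}$ that witnesses the pairing of $\widehat{\pi}_Y!(\mF_i,\rho_i)$ in $\widehat{\mathrm{IC}}_G^*(B)$.

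The topological requirement $\ind(D(\pi_Y^!\mF_1))=\ind(D(\pi_Y^!\mF_2))$ in $K_G^*(B)$ is immediate: push forward the analogous equality in $K_G^*(V)$ under $\pi_{Y!}$, using functoriality of the equivariant family index and the identification of the composed family of Dirac operators with $D(\pi_Y^!\mF_i)$ recalled near (\ref{d073}).

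The analytic requirement is the heart of the proof. Writing the $\rho$-component of $\widehat{\pi}_Y!(\mF_i,\rho_i)$ schematically as
\[
\int_{Y^g}\td_g(\nabla^{TY},\nabla^{L_Y})\wedge\rho_i\;-\;\widetilde{\eta}_G(\pi_Y^!\mF_i,A_{Z,i}),
\]
the difference between the two pushforward-$\rho$'s is, modulo exact forms,
\[
\int_{Y^g}\td_g\wedge(\rho_1-\rho_2)\;+\;\widetilde{\eta}_G(\pi_Y^!\mF_2,A_{Z,2})\;-\;\widetilde{\eta}_G(\pi_Y^!\mF_1,A_{Z,1}).
\]
Substituting the pairing relation $\rho_1-\rho_2=\widetilde{\eta}_G(\mF_1+\mF_2^{\mathrm{op}},A)$ and applying the functoriality theorem (Theorem \ref{D08}) to $\int_{Y^g}\td_g\wedge\widetilde{\eta}_G(\mF_1+\mF_2^{\mathrm{op}},A)$ converts it into $\widetilde{\eta}_G(\pi_Y^!(\mF_1+\mF_2^{\mathrm{op}}),1\widehat{\otimes}TA)$ on $B$, up to a $\widetilde{\td}_g$ Chern-Simons term and the Chern character of an equivariant higher spectral flow class. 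Then the anomaly formula (Theorem \ref{D06}) recombines the three remaining $\widetilde{\eta}_G$ terms on $B$ into a single $\widetilde{\eta}_G(\pi_Y^!\mF_1+(\pi_Y^!\mF_2)^{\mathrm{op}},A')$ modulo exact forms, where $A'$ is assembled from the three perturbations with matched spectral sections.

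The main obstacle is the accumulated Chern-character-of-spectral-flow residuals produced by Theorems \ref{D06} and \ref{D08}: they are classes in $K_G^*(B)\otimes\R$, not forms, and do not cancel automatically. I would absorb them by invoking Proposition \ref{D05} to realise the combined residual as $[P-Q]$ for spectral sections on an auxiliary equivariant geometric family, and then modifying $A'$ by the corresponding smoothing operator so that the induced shift in $\widetilde{\eta}_G$ (governed by Definition \ref{d017} together with the variation formula for eta forms under change of spectral section) cancels the residual exactly. Step (a), independence from the $A_{Z,i}$, reduces to the same argument applied to the trivial pairing $(\mF,\rho)\sim(\mF,\rho)$, and compatibility with disjoint union is routine from the additivity of all ingredients in the construction.
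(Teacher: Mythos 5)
Your proposal follows essentially the same route as the paper's proof of Theorem \ref{d083}: reduce via additivity to showing $\widehat{\pi}_Y!([\mF_X,\widetilde{\eta}_G(\mF_X,A_X)])=0$, apply the functoriality theorem (in the form of Theorem \ref{d188}) to rewrite the pushed-forward eta form as $\widetilde{\eta}_G(\mF_Z,A_Z)-\ch_G(x)$ with $x\in K_G^*(B)$, realize $x=[P-Q]$ via Proposition \ref{d123}, convert $\ch_G(x)$ into a difference of eta forms via the anomaly formula (Theorem \ref{d176}), and finally identify the result with $[\mF+\mF^{\mathrm{op}},0]=0$. Every ingredient you list (the two main theorems, Proposition \ref{D05}, and the absorption of the spectral-flow residual by a change of spectral section) matches the paper's argument step for step.
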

	 
	By Theorems \ref{D06} and \ref{D08}, in Section 3, we also prove that our model is a ring valued functor with the usual properties of the differential extension of a generalized cohomology. Finally, we explain that this model could be naturally regarded as a model of differential K-theory for orbifolds.

	Note that there is no adiabatic limit in Theorem \ref{D08}. So in non-equivariant case, our proofs in the construction of the differential
	K-theory simplify that in \cite{BS09} a little.
		
	This paper is organized as follows.
	
	In Section 1, we give a geometric description of the equivariant K-theory.
	In Section 2, we extend the spectral section to the equivariant case, introduce the equivariant higher spectral flow for arbitrary dimensional fibers and use them to obtain the anomaly formula and the functoriality of the equivariant eta forms.
	In Section 3, we construct an analytic model for the equivariant differential K-theory and prove some properties.
	
	\textbf{Notation}: All manifolds in this paper are smooth and without boundary.
	We denote by $d$ the exterior differential operator and 
	$d^B$ when we like to insist the base manifold $B$.
	
	We use the Einstein summation convention in this paper: when an index variable appears twice in a single 
	term and is not otherwise defined, it implies summation of that 
	term over all the values of the index.
	
	We also use the superconnection formalism of Quillen 
	\cite{Qu85} and Bismut-Cheeger \cite{BC89}. 
	If $\mA$ is a $\Z_2$-graded algebra, and if $a,b\in \mA$, 
	then we will note $[a,b]$ as the supercommutator of $a$, $b$.
	In the whole paper, if $E$, $E'$ are two $\Z_2$-graded spaces
	we will note $E\widehat{\otimes}E'$ as the $\Z_2$-graded 
	tensor product as in \cite[\S 1.3]{BGV04}.
	If one of $E, E'$ is ungraded, we understand 
	it as $\Z_2$-graded by taking its odd part as zero.
	Let $P$ be a trace class operator on a $\Z_2$-graded space
	$E=E_+\oplus E_-$. Let $P|_{E_+}$ and $P|_{E_-}$ be the restrictions of $P$ on $E_+$ and $E_-$ respectively.
%	For trace class operators $P_+$, $P_-$ acting on spaces $E_+$, $ E_-$ respectively, $P=P_+\oplus P_-$ is a trace class operator
%	on $\Z_2$-graded space $E=E_+\oplus E_-$.
	We denote by
	\begin{align}\label{i22}
	\tr_s[P]=\tr[P|_{E_+}]-\tr[P|_{E_-}].
	\end{align}
%	If $A$ is a $\Z_2$-graded algebra, and if
%	$a,b\in A$, then we will note $[a,b]$ as the supercommutator of $a$, $b$. 
%%	If $X=X_+\oplus X_-$ be a $\Z_2$-graded representation of $A$,
%%	\begin{align}\label{i22}
%%	\tr_s[a]=\tr|_{X_+}[a]-\tr|_{X_-}[a], \quad a\in A
%%	\end{align}
%%	defines a supertrace on $A$, that is a linear map whose kernel contains all supercommutators.
%	If $B$ is another $\Z_2$-graded algebra,
%	we will note $A\widehat{\otimes}B$ as the $\Z_2$-graded tensor product.
%%	If $X$ and $Y$ are two representations of $A$ and $B$ respectively, put
%%	\begin{align}\label{i11}
%%	X\widehat{\otimes}Y=X\otimes Y\otimes\C^2.
%%	\end{align}
%%	If one of $X$ and $Y$ is $\Z_2$-graded, we define $X\widehat{\otimes}Y$ in usual way. Following \cite{BC89}, $X\widehat{\otimes}Y$ is a $\Z_2$-representation of $A\widehat{\otimes}B$. 
	
For the fiber bundle $\pi: W\rightarrow B$, we will often use 
the integration of the differential forms along the oriented 
fibres $Z$ in this paper. Since the fibers may be odd 
dimensional, we must make precisely our sign conventions: 
for $\alpha\in \Omega^{\bullet}(B)$ and 
$\beta\in \Omega^{\bullet}(W)$, then
	\begin{align}\label{e01136}
	\int_Z(\pi^*\alpha)\wedge\beta=\alpha\wedge \int_Z\beta.
	\end{align}
		
	\section{Equivariant K-theory}\label{s01} 
	
	In this section, we explain a geometric description of the equivariant K-theory in \cite{BS09,BS13} for any compact Lie group action and define the push-forward map of equivariant
	K-groups in this point of view. In Section \ref{s0101}, we recall some elementary results of 
	the Clifford algebra. In Section \ref{s0102}, we introduce the equivariant geometric 
	family. In Section \ref{s0103}, we give a geometric description of the equivariant
	K-theory. In Section \ref{s0104}, we study the push-forward map in equivariant K-theory using equivariant geometric families. 
	
	\subsection{Clifford algebra}\label{s0101}
	Let $E$ be an oriented Euclidean space of dimension $n$.
	Let $C(E)$ denote the complex Clifford algebra of $E$.
	Relative to an orthonormal basis of $E$,
	 $\{e_i\}_{1\leq i\leq n}$, $C(E)$ is
	defined by the relations
	\begin{align}\label{e01001}
	e_ie_j+e_je_i=-2\delta_{ij}.
	\end{align}
	To avoid ambiguity, we denote by $c(e_i)$ the element of $C(E)$ corresponding to $e_i$. 
	The Clifford algebra $C(E)$ is naturally $\Z_2$-graded from the $\N$-grading of the 
	tensor algebra after reduction mod $2$. We denote by
	$C(E)=C_0(E)\oplus C_1(E)$.
	Let $\mathrm{Spin}_n^c$ be the Spin$^c$ group associated with $C(E)$ (cf. 
	\cite[Appendix D]{LM89}).

If $n=2k$ is even, up to isomorphism, $C(E)$ has a unique irreducible representation, 
the spinor $\mS(E)$, which has a $\Z_2$-grading obtained from the chirality operator
$$\tau_E=(\sqrt{-1})^kc(e_1)\cdots c(e_{2k}).$$ 
We write $\mS(E)=\mS_+(E)\oplus \mS_-(E)$ with respect to this $\Z_2$-grading.
In fact, there are isomorphisms of $\Z_2$-graded algebras
\begin{align}\label{bl0006}
C(E)\simeq \End(\mS(E))\simeq \mS(E)\widehat{\otimes} \mS(E)^*.
\end{align}
For any $A\in C(E)$, we write $\tr_s[A]:=\tr[\tau_EA]$ the supertrace on $\mS(E)$.
Note that $\mS(E)$ is also a representation of 
$\mathrm{Spin}_n^c$ induced by the Clifford action.

If $n=2k-1$ is odd, 
$C(E)$ has only two inequivalent irreducible representations. 
For arbitrary $n$, $c(e_j)\mapsto c(e_j)c(e_{n+1})$, $1\leq j\leq n$,
defines an isomorphism $C(E)\simeq C_0(E\oplus \R)$
of algebras. Since $n$ is odd,
we can regard $\mS_{\pm}(E\oplus \R)$ as the two inequivalent irreducible 
representations of $C(E)$. 
Their restrictions to $\mathrm{Spin}_n^c$ are equivalent. 
In the following, we may and we take $\mS_{+}(E\oplus \R)$
as the spinor for $C(E)$, also denoted by $\mS(E)$ for the convenience. 
In particular, the notation $\tr[\cdot]$ 
on the spinor refers to 
the representation $\mS_{+}(E\oplus \R)$.
%We have
%\begin{align}
%C(E)\simeq \End(\mS(E))\oplus \End(\mS(E)).
%\end{align}

Let $F$ be another oriented Euclidean space.  
Let $\mS(E)\widehat{\otimes}\mS(F)$ be the $\Z_2$-graded
tensor product of $\mS(E)$ and $\mS(F)$.
Then it is a $\Z_2$-graded representation of $C(E)\widehat{\otimes}C(F)$
defined by
\begin{align}\label{eq:0.03}
(a_1\widehat{\otimes} a_2)(s_1\widehat{\otimes}s_2)
=(-1)^{|a_2|\cdot |s_1|}(a_1s_1)\widehat{\otimes}(a_2s_2),
\end{align}
where $a_1\in C(E)$, $a_2\in C(F)$, $s_1\in \mS(E)$, $s_2\in 
\mS(F)$ and $|a_2|$, $|s_1|$ are degrees of $a_2$, $s_1$
associated with the $\Z_2$-gradings of $C(F)$, $\mS(E)$
respectively. 
We express $\mathcal{S}(E) \widehat{\otimes} 
\mathcal{S}(F)$ by $\mS(E)$ and $\mS(F)$ using ungraded tensor product as follows (cf. \cite[(1.10), (1.11)]{BC89}).
%
%Let us introduce the convention for the tensor product of Clifford 
%modules 
%which will be used in the whole paper. 
%Let $\mathcal{X}$ and $\mathcal{Y}$ be two representations of $C(E)$ 
%and $C(F)$. 

If both $\dim E$ and $\dim F$ are odd, let $\C^2=\C\oplus \C$ define
the grading on $\C^2$ and let $J,K\in \End(\C^2)$ denote the involutions
\begin{align}\label{eq:1.04}
J=\left(\begin{array}{cc}
0 & -\sqrt{-1} \\
\sqrt{-1} & 0
\end{array}
\right),\quad
K=\left(\begin{array}{cc}
0 & 1 \\
1 & 0
\end{array}
\right).
\end{align}
Note that $J^2=K^2=1$, $JK=-KJ$. Then 
 $\mS(E)\otimes \mS(F)\otimes \C^2$ with involution
$1\otimes 1\otimes \sqrt{-1} JK$ is the unique irreducible $\Z_2$-graded 
representation of $C(E)\widehat{\otimes}C(F)$ defined by
\begin{align}\label{eq:1.05}
a_i\widehat{\otimes} b_j\rightarrow a_i\otimes b_j\otimes J^iK^j,\quad 
a_i\in C_i(E), b_j\in C_j(F).
\end{align}
It is isomorphic to $\mathcal{S}(E) \widehat{\otimes} 
\mathcal{S}(F)$ as $\Z_2$-graded $C(E)\widehat{\otimes}C(F)$-representations.

If $\dim E$ is even and $\dim F$ is odd, 
then as representations,
$\mS(E)\widehat{\otimes}\mS(F)$ is isomorphic to
$\mS(E)\otimes \mS(F)$ with 
$C(E)\widehat{\otimes}C(F)$-action defined by 
\begin{align}\label{eq:1.07}
a\widehat{\otimes} b_i\rightarrow a \tau_E^i\otimes b_i,\quad 
a\in C(E), b_i\in C_i(F).
\end{align}

If $\dim E$ is odd and  $\dim F$ is even, 
then as representations,
$\mS(E)\widehat{\otimes}\mS(F)$ is isomorphic to
$\mS(E)\otimes \mS(F)$ with 
$C(E)\widehat{\otimes}C(F)$-action defined by 
\begin{align}\label{eq:1.06}
a_i\widehat{\otimes} b\rightarrow a_i\otimes \tau_F^i b,\quad 
a_i\in C_i(E), b\in C(F).
\end{align}

%defines $\mS(E)\otimes \mS(F)$ an irreducible representation 
%of $C(E)\widehat{\otimes}C(F)$, which is isomorphic to 
%$\mS(E)\otimes \mS(F)$.
%\begin{align}\label{eqBC}
%\mathcal{X}\widehat{\otimes}\mathcal{Y}=\mathcal{X}\otimes\mathcal{Y}.
%\end{align}
%By [9, (1.11)], 
%$\mathcal{X}\widehat{\otimes}\mathcal{Y}$ is a representation of 
%$C(E)\widehat{\otimes} C(F).$

If both $\dim E$ and $\dim F$ are even, the representation $\mS(E)\otimes \mS(F)$ with $C(E)\widehat{\otimes}C(F)$-action
defined by (\ref{eq:1.06})
is the unique irreducible one and $\Z_2$-graded for the tensor product
grading on $\mS(E)\otimes \mS(F)$. It is isomorphic to 
$\mS(E)\widehat{\otimes} \mS(F)$ as representations.

Since
\begin{align}\label{e01087}
C(E\oplus F)\simeq C(E)\widehat{\otimes}C(F),
\end{align}
$\mS(E)\widehat{\otimes}\mS(F)$ is also a $\Z_2$-graded representation
of $C(E\oplus F)$.
By (\ref{e01087}), we have the isomorphism of representations
\begin{align}
\mathcal{S}(E\oplus F)\simeq \mathcal{S}(E) \widehat{\otimes} 
\mathcal{S}(F). 
\end{align}

	\subsection{Equivariant geometric family}\label{s0102}
	
	In this subsection, we introduce the equivariant geometric family
	(cf. \cite{BS09,BS13}).
	
	Let $\pi:W\rightarrow B$ be a smooth surjective proper submersion of compact manifolds with compact fibers $Z$ (possibly non-connected). 
	Let $B=\sqcup_i B_i$ be a finite disjoint union of compact connected manifolds.
	Let $W_i$ be the restriction of $W$ on $B_i$. Let $W_i=\sqcup_jW_{ij}$ be a finite disjoint union of compact connected manifolds. Let $Z_{ij}$ be the fibers of the submersions restricted on $W_{ij}$.
	 We note here that the dimension of $Z_{ij}$ might be zero. In the sequel, we will often omit the subscripts $i,j$.
	
	Let $TZ=\ker(d\pi)$ be the relative tangent bundle to the fibers $Z$ over $W$, which is a subbundle of $TW$.
	We assume that $TZ$ is orientable and carries an orientation $o\in H^0(W,\Z_2)$.
	Let $T_{\pi}^HW$ be a horizontal subbundle of $TW$ such that
	\begin{align}\label{e01010}
	TW=T_{\pi}^HW\oplus TZ.
	\end{align}
	The splitting (\ref{e01010}) gives an identification
	\begin{align}\label{e01011}
	T_{\pi}^HW\cong \pi^*TB.
	\end{align}
	If there is no ambiguity, we will omit the subscript $\pi$ in $T_{\pi}^HW$.
	Let $P^{TZ}$ be the projection
	\begin{align}\label{e01012}
	P^{TZ}:TW=T_{}^HW\oplus TZ \rightarrow TZ.
	\end{align}
	
	Let $g^{TZ}$, $g^{TB}$ be Riemannian metrics on $TZ$, $TB$. We equip $TW=T_{}^HW\oplus TZ$ with the Riemannian metric
	\begin{align}\label{e01013}
	g^{TW}=\pi^*g^{TB}\oplus g^{TZ}.
	\end{align}
	
	Let $\nabla^{TW}$ be the Levi-Civita connection on $(W, g^{TW})$. Set
	\begin{align}\label{e01014}
	\nabla^{TZ}=P^{TZ}\nabla^{TW}P^{TZ}.
	\end{align}
	Then $\nabla^{TZ}$ is a Euclidean connection on $TZ$. By \cite[Theorem 1.9]{Bi86}, we know that
	$\nabla^{TZ}$ only depends on $(T_{}^HW, g^{TZ})$.
		
	Let $C(TZ)$ be the Clifford algebra bundle of $(TZ, g^{TZ})$, whose fiber at $x\in W$ is the Clifford algebra
	$C(T_xZ)$ of the Euclidean space $(T_xZ, g^{T_xZ})$.
	
	\textbf{We make the assumption that the oriented vector
		bundle $(TZ, o)$ has a Spin$^c$ structure.} Then there exists a complex line bundle $L_Z$
	over $W$ such that $\omega_2(TZ)= c_1(L_Z) \mod 2$,
	where $\omega_2$ denotes the second Stiefel-Whitney class
	and $c_1$ denotes the first Chern class.
	Let $\mS(TZ,L_Z)$ be the fundamental
	complex spinor bundle for $(TZ,L_Z)$, which
	has a smooth action of $C(TZ)$ (cf. \cite[Appendix D.9]{LM89}).
	Locally, the spinor bundle $\mS(TZ,L_Z)$ may be written
	as
	\begin{align}\label{e01071}
	\mS(TZ,L_Z) = \mS(TZ)\otimes L_Z^{1/2},
	\end{align}
	where $\mS(TZ)$ is the fundamental spinor bundle for the (possibly non-existent) spin
	structure on $TZ$, and $L_Z^{1/2}$ is the (possibly non-existent) square root of $L_Z$.
	Let $h^{L_Z}$ be a Hermitian metric on $L_Z$.
	Then from (\ref{e01071}), the metrics $g^{TZ}$ and $h^{L_Z}$
	induce a Hermitian metric on $\mS(TZ,L_Z)$, which we denote by 
	$h^{\mS_Z}$ for simplicity. 
	Let $\nabla^{L_Z}$ be a Hermitian connection on $(L_Z, h^{L_Z})$. Similarly, we denote by $\nabla^{\mS_Z}$ the 
	connection on $\mS(TZ,L_Z)$ induced by $\nabla^{TZ}$
	and $\nabla^L$ from (\ref{e01071}).	
%	Let $h^{\mS_Z}$ be the Hermitian metric on $\mS(TZ,L_Z)$ induced by $g^{TZ}$ and $h^{L_Z}$
%	and $\nabla^{\mS_Z}$ be the connection on $\mS(TZ,L_Z)$ induced by $\nabla^{TZ}$ and $\nabla^{L_Z}$ from (\ref{e01071}).
	Then $\nabla^{\mS_Z}$ is a Hermitian connection on ($\mS(TZ,L_Z), h^{\mS_Z}$).
	Moreover, it is a Clifford connection associated with $\nabla^{TZ}$, i.e., for any $U\in TW$, $V\in \cC^{\infty}(W,TZ)$,
	\begin{align}\label{e01024}
	\left[\nabla_U^{\mS_Z}, c(V)\right]=c\left(\nabla^{TZ}_UV\right).
	\end{align}
	In the following, we often simply denote the spinor bundle
	$\mS(TZ, L_Z)$ by $\mS_Z$.
	If $n=\dim Z$ is even, $\mS_Z$ is $\Z_2$-graded and the action of $TZ$ exchanges the $\Z_2$-grading. 
	
	Let $E=E_+\oplus E_-$ be a $\Z_2$-graded smooth complex vector bundle over $W$ with Hermitian metric $h^E$, for which $E_+$ and $E_-$ are orthogonal, and let $\nabla^E$ be a Hermitian connection on $(E, h^E)$ preserving the $\Z_2$-grading. Set
	\begin{align}
	\nabla^{\mS_Z\widehat{\otimes} E}:=\nabla^{\mS_Z}\widehat{\otimes} 1+1\widehat{\otimes} \nabla^E.
	\end{align}
	Then $\nabla^{\mS_Z\widehat{\otimes} E}$ is a Hermitian connection on $(\mS_Z\widehat{\otimes} E, h^{\mS_Z}\otimes h^E)$.
		
	Let $G$ be a compact Lie group which acts on $W$ and $B$ such that for any $g\in G$, $\pi\circ g=g\circ\pi$.
	We assume that the $G$-action preserves the splitting (\ref{e01010}) and the $\mathrm{Spin}^c$ structure of $TZ$. Thus $TZ$, $L_Z$, $\mS_Z$ are 
	$G$-equivariant vector bundles.
	We assume that $g^{TZ}$, $h^{L_Z}$, $\nabla^{L_Z}$ are $G$-invariant.
	We further assume that $E$ is a $G$-equivariant $\Z_2$-graded complex vector bundle and $h^E$, $\nabla^E$ are $G$-invariant.
	Note that the $G$-action here may be nontrivial on $B$.
	
	\begin{defn}\label{d001}(Compare with \cite[Definition 2.2]{BS09})
		An equivariant geometric family $\mF$ over $B$ is a family of $G$-equivariant geometric data
		\begin{align}\label{d002}
		\mF=(W, L_Z, E, o, T_{}^HW, g^{TZ}, h^{L_Z}, \nabla^{L_Z}, h^E, \nabla^{E})
		\end{align}
		described above. We call the equivariant geometric family $\mF$ is even (resp. odd) if
		for any connected component of fibers, the dimension is even (resp. odd).
	\end{defn}
	
	\begin{defn}\label{d125}(Compare with \cite[\S 2.1.7]{BS09})
		Let $\mF$ and $\mF'$ be two equivariant geometric families over $B$. An isomorphism $\mF\overset{\sim}{\rightarrow}\mF'$
		consists of the following data:
		\begin{center}\label{d044}
			\begin{tikzpicture}[>=angle 90]
			\matrix(a)[matrix of math nodes,
			row sep=2em, column sep=2.5em,
			text height=1.5ex, text depth=0.25ex]
			{ E & & E'\\
				W& &W'\\
				& B & \\};
			\path[->](a-1-1) edge node[above]{\footnotesize{$f_E
					$}} (a-1-3);
			\path[->](a-2-1) edge node[above]{\footnotesize{$f$}} (a-2-3);
			\path[->](a-1-1) edge node[right]{} (a-2-1);
			\path[->](a-1-3) edge node[above]{} (a-2-3);
			\path[->](a-2-1) edge node[above]{\footnotesize{$\pi$}} (a-3-2);
			\path[->](a-2-3) edge node[above]{\footnotesize{$\pi'$}} (a-3-2);
			%\path[->](a-1-3) edge node[above]{\footnotesize{$\pi_3$}} (a-2-4);
			\end{tikzpicture}
		\end{center}
		where
		
		1. $f$ is a diffeomorphism commuting with the $G$-action such that $\pi'\circ f=\pi$, which implies  that 
		$f$ preserves the relative tangent bundle;
		
		2. $f$ preserves the orientation and the Spin$^c$ structure of the relative tangent bundle, which implies that there exists an equivariant complex line bundle isomorphism
		$f_L: L_Z\to L_Z'$;
%		 and lifts to an equivariant isomorphism
%		$f^*:\mS(TZ', L_Z')\to \mS(TZ, L_Z)$ which commutes with 
%		the Clifford multiplication;
		
		3. $f_E:E\to E'$ is an equivariant vector bundle isomorphism over $f$, which preserves the $\Z_2$-grading;
			
		4. $f$ preserves the horizontal subbundle and the vertical metric;
		
		5. $f_L$ and $f_E$ preserve the metrics and the connections on the vector bundles.
		
		If only the first three conditions hold, we say that $\mF$ and $\mF'$ have\textbf{ the same topological structure}.
				
		Let $\mathrm{F}_G^0(B)$ (resp. $\mathrm{F}_G^1(B)$) be the set of equivalence classes of even (resp. odd) equivariant geometric families over $B$.
	\end{defn}
		
	For two equivariant geometric families $\mF,\mF'$ over $B$, we can form their sum $\mF + \mF'$ over $B$ as a new equivariant geometric family:
	the underlying fibration of the sum is $\pi\sqcup \pi':W\sqcup W'\rightarrow B$, where $\sqcup$
	is the disjoint union and the remaining structures of $\mF+\mF'$ are induced in the obvious way.  Let $\mathrm{F}_G^*(B)=\mathrm{F}_G^0(B)\oplus \mathrm{F}_G^1(B)$. It is an 	additive abelian semigroup.

	%We will discuss the general case of pullback in Section 3.3 ????????? later.
	
		For $\mF,\mF'\in \mathrm{F}_G^*(B)$, we can also form their product $\mF \times_B \mF'$ over $B$.
	The total space of the underlying fibration of $\mF \times_B \mF'$ is $ W\times_B W':=\{(w,w')\in W\times W' : \pi(w)=\pi'(w') \}$ and the fiber is $Z\times Z'$. 
	Let $\mathrm{pr}_W:W\times_B W'\rightarrow W$ and $\mathrm{pr}_{W'}:W\times_B W'\rightarrow W'$
	be the obvious projections.
	The complex vector bundle now is $\mathrm{pr}_W^*E \widehat{\otimes} \mathrm{pr}_{W'}^*E'$. 
	%Here the spinor $\mS_Z\widehat{\otimes}\mS_{Z'}$ is constructed as in Section 1.1. 		
	The remaining structures of $\mF\times_B \mF'$ are induced in the obvious way. 	

	Let $B$, $B'$ be two compact manifolds with smooth $G$-action.
Let $f:B\times B'\rightarrow B$ be the projection onto the first part. For any $\mF\in \mathrm{F}_G^*(B)$, we could construct the pullback $f^*\mF\in \mathrm{F}_G^*(B\times B')$ in a natural way. 
Remark that in general case, for a $G$-equivariant map
$f:B'\to B$ and $\mF\in \mathrm{F}_G^*(B)$, $f^*\mF$
is hard to define canonically because we cannot choose a canonical
horizontal subbundle in $f^*\mF$. We will show more details
in Section \ref{s0303} later.
	
	\begin{defn}\label{d041}
		The opposite family $\mF^{\mathrm{op}}$ of an equivariant geometric family $\mF$ is obtained by reversing 
		the $\Z_2$-grading of $E$.
	\end{defn}

	\subsection{Equivariant K-Theory}\label{s0103}
	
	In this subsection, we give some examples of the equivariant geometric families and a geometric description of the equivariant K-theory.
	
	Let $K_G^0(B)$ be the $G$-equivariant $K^0$-group of $B$, which is the Grothendieck group of the equivalence classes of $G$-equivariant topological complex vector bundles over $B$ (cf. \cite{Se68}). Since $G$ is compact, 
	by Proposition \ref{prop:a04}, it is also the 
	Grothendieck group of the 
	equivalence classes of $G$-equivariant smooth complex vector bundles. Note that the ring structure of the $K^0$-group is
	induced by the tensor product of the
	complex vector bundles.
	
		We lift the $G$-action on $B\times S^1$ such that the $G$-action on $S^1$ is trivial. Take $s\in S^1$ fixed.
		Let $i:B\ni b\rightarrow (b,s)\in B\times S^1$ be the $G$-equivariant inclusion map.
		Let $i^*:K_G^0(B\times S^1)\to K_G^0(B)$ be the induced
		homomorphism.
	Let $K_{G}^{1}(B)$ be the $G$-equivariant
	$K^{1}$-group 
	of $B$.
	By \cite[Definitions 2.7 and 2.8]{Se68}, we have the split short exact sequence
%	\begin{align}\label{eq:3.032} 
%	0\rightarrow K_{S^1}^{1}(Y)\overset{\varsigma}{\rightarrow} 
%	K_{S^1}^0(Y\times \widehat{S^1})\overset{i^*}  %\footnotesize{i^*}}
%	{\rightarrow}K_{S^1}^0(Y)\rightarrow 0,
%	\end{align}
%	where $\widehat{S^1}$ is a copy of $S^1$
%	with trivial $S^1$-action %on $\widehat{S^1}$
%	and there exists $b\in \widehat{S^1}$ such that
%	the map $i$ is given by
%	$i: Y\ni y\rightarrow (y,b)\in Y\times \widehat{S^1}$.
%  It is well known that the equivariant $K^1$-group $K_G^1(B)$
%	 is characterized by the short exact sequence
	\begin{align}\label{d124}
	0\longrightarrow K_G^1(B)\overset{j}{\longrightarrow} K_G^0(B\times S^1)\overset{i^*}{\longrightarrow} K_G^0(B)\longrightarrow 0,
	\end{align}
	where $j$ is induced by the suspension isomorphism $K_G^1(B)\simeq \widetilde{K}_G^0( B\wedge S^1)\simeq \ker(i^*)$ (cf. \cite[p136]{Se68}). Here $B\wedge S^1$ 
	is the smash product of $B$ and $S^1$ and $\widetilde{K}_G^0( B\wedge S^1)$ is the $G$-equivariant reduced $K^0$-group
	of $B\wedge S^1$. 
	
	Now we introduce another explanation of $K_G^1(B)$.
	Let $V$ be a finite dimensional complex unitary 
	representation of $G$. 
	If $F\in\cC^{\infty}(B,\End(V))$ such
	that for any $b\in B$, $F(b)\in \End(V)$ is unitary and for any $g\in G$,
	$v\in V$,
	\begin{align}\label{1121}
	g(F(b)v)=F(gb)(gv),
	\end{align}
	we say $F$ is a $G$-invariant unitary element of $ \cC^{\infty}(B,\End(V))$. In this case,
	for $(b,t,v)\in B\times [0,1]\times V$, the relation $(b,0,v)\sim (b,1, F(b)v)$ forms a $G$-equivariant smooth Hermitian vector bundle $W$ over $B\times S^1$. Let $U=B\times S^1\times V$ be the $G$-equivariant trivial bundle  over $B\times S^1$ as in (\ref{eq:a02}). Then from (\ref{d124}), $[W]-[U]\in \ker(i^*)$ corresponds to an element $[F]\in K_G^1(B)$.

	\begin{lemma}\label{lemma:1.04}
	For any $y\in K_G^1(B)$, there exists a finite dimensional
	complex unitary representation $V$ of $G$, such that $y$ can be represented as a $G$-invariant unitary element of $ \cC^{\infty}(B,\End(V))$. 
	\end{lemma}
	\begin{proof}
		By (\ref{d124}), an element $y\in K_G^1(B)$ can be
		represented as an 
element $x=j(y)\in K_G^0(B\times S^1)$ such that $i^*x=0\in K_G^0(B)$.
We write $x=W-U$, where $W$ and $U$ are equivariant smooth 
complex vector bundles over $B\times S^1$. 
By Proposition \ref{prop:a02}, we may and we will assume that $U$ is an equivariant 
trivial complex vector bundle over $B\times S^1$
associated with a finite dimensional complex $G$-representation $V$ as in (\ref{eq:a02}).
Note that $B\times S^1\simeq B\times \R/\Z$. We assume that $i(B)=B\times \{1/2\}$. Since $i^*x=W|_{B\times \{1/2\}}-U|_{B\times \{1/2\}}
=0\in K_G^0(B)$, by Proposition \ref{prop:a02}, we may and we will assume that $W|_{B\times \{1/2\}}$
is isomorphic to the equivariant trivial bundle $(B\times\{1/2\})\times V$ over $B\times\{1/2\}$ as equivariant smooth complex vector bundles.
Since $(0,1)$ is contractible,  as equivariant smooth complex vector bundles over $B\times (0,1)$, $W|_{B\times (0,1)}\simeq (\Id_B\times p_{1/2})^*(W|_{B\times \{1/2\}})$ where $p_{1/2}:(0,1)\to 1/2$
is the constant map. Since $B\times (0,1)\times V=
(\Id_B\times p_{1/2})^*((B\times\{1/2\})\times V)$ as complex vector
bundles over $B\times (0,1)$,
there exists a $G$-equivariant smooth complex vector bundle isomorphism 
\begin{align}\label{1123}
f:W|_{B\times (0,1)}\rightarrow B\times (0,1)\times V.
\end{align} 
Let $h:B\times (0,1)\times V\to V$ be the obvious projection.
For any $b\in B$, $v\in V$, we could choose a section $s\in \cC^{\infty}(B\times S^1,W)$
such that $\lim_{t\rightarrow 0}h\circ f(s(b,t))=v$. Then we define 
\begin{align}
F(b)v:=\lim_{t\to 1}h\circ f(s(b,t))\in V.
\end{align}
Note that the definition of $F(b)\in \End(V)$ does not depend on the choices 
of the isomorphic map $f$ and the section $s$. 
Take a $G$-invariant Hermitian metric on $W$ which induces a $G$-invariant
Hermitian inner product on $V$.
It is obvious that $F$ is a $G$-invariant unitary element of $\cC^{\infty}(B,\End(V))$ and $[F]=y\in K_G^1(B)$.

The proof of Lemma \ref{lemma:1.04} is completed.
	\end{proof}
%	While this should be well-known for the experts,
%	we were unable to find a proof in the literature. We will explain it in some details as follows.

For an equivariant geometric family $\mF$, the fiberwise Dirac operator $D(\mF)$ associated with $\mF$ is defined by	
%	Let $\{e_i\}$ be a local orthonormal frame of $TZ$. Let $D(\mF)$ be the fiberwise Dirac operator
	\begin{align}\label{e01029}
	D(\mF):=\sum_ic(e_i)\nabla_{e_i}^{\mS_Z\widehat{\otimes} E},
	\end{align}
	where $\{e_i\}$ is a local orthonormal frame of $TZ$.
	Note that the definition of the fiberwise Dirac operator
	is independent of the choice of the local orthonormal frame.
	From (\ref{e01029}), the $G$-action commutes with $D(\mF)$.
	If $\mF$ is isomorphic to $\mF'$, from Definition \ref{d125}
	and (\ref{e01029}),
	the isomorphism preserves the fiberwise Dirac operator. 
	So the fiberwise Dirac operator can be defined on 
	an element of $\mathrm{F}_G^*(B)$. 
	For an even (resp. odd) equivariant geometric family $\mF$,
	the classical construction of Atiyah-Singer
	assigns to this family its equivariant (analytic)
	index $\ind(D(\mF))\in K_G^0(B)$ (resp. $K_G^1(B)$)  (cf. \cite{AS69,AS71}).
	Remark that $\ind(D(\mF))$ depends only on the topological
	structure of $\mF$. It induces a map
		\begin{align}\label{d126}
	\begin{split}
	\ind: \mathrm{F}_G^*(B)&\rightarrow K_G^*(B),
	\\
	\mF\quad&\mapsto \ind(D(\mF)).
	\end{split}
	\end{align}	
%	In particular, if $Z=\mathrm{pt}$, $D(\mF)=0$. In this case, 
%	$\ind(D(\mF))=[E_+]-[E_-]\in K_G^0(B)$.	
	Let $K_G^*(B)=K_G^0(B)\oplus K_G^1(B)$.
	Since 
	\begin{align}\label{d171}
	\ind(D(\mF+\mF'))=\ind (D(\mF))+ \ind(D(\mF'))\in K_G^*(B),
    \end{align}
	the equivariant index map in (\ref{d126}) is a semigroup homomorphism.
		It is well-known that if $\mF$ and $\mF'$ are even,
		\begin{align}\label{d170}
		\ind(D(\mF\times_B \mF'))=\ind (D(\mF))\cdot \ind(D(\mF'))\in K_G^0(B).
		\end{align}

	\begin{example}\label{d129}
		a) Let $(E, h^E)$ be an equivariant $\Z_2$-graded smooth Hermitian vector bundle over $B$ with a $G$-invariant Hermitian connection $\nabla^E$. Then $(E, h^E, \nabla^E)$ can be regarded as an even equivariant geometric family $\mF$ for $Z=\mathrm{pt}$. In this case, $D(\mF)=0$ and $\ind(D(\mF))=[E_+]-[E_-]\in K_G^0(B)$.
		
		b) Let $W=B\times \C P^1$ with $G$-action which acts trivially on $\C P^1$.
		Then the complex line bundle $\mathcal{O}(1)$ over $\C P^1$ can be naturally extended on $W$. Thus $(W, \mathcal{O}(1))$ with canonical metrics, connections, the standard orientation $o$ of $\C P^1$ and the Spin structure on $\C P^1$ form an even equivariant geometric family $\mF_S$ over $B$. 
		Let $D_{\C P^1}^{\mathcal{O}(1)}$ be the Dirac operator on $\C P^1$ associated with $\mathcal{O}(1)$.
		Since $\ind(D_{\C P^1}^{\mathcal{O}(1)})=\la c_1(\mathcal{O}(1)), [\C P^1]\ra=1$, from (\ref{d170}), for even equivariant geometric family $\mF$ in a), we have $\ind(D(\mF\times_B \mF_S))=\ind(D(\mF))\in K_G^0(B)$.
		
		c) (Compare with \cite[\S 5]{MP97b} and \cite[\S 2.2.3.8]{Bu09})
		Let $B=S_{\theta}^1= \R/\Z$,
		$W=S_{\theta}^1\times S_{t}^1$ and $\pi:W\to B$ be the projection onto the first part. We consider the Hermitian line
		bundle $(L,h^L)$ which  is  obtained  by  identifying  
		\begin{align}\label{eq:1.25}
		(\theta=0, t, v),\ 
		(\theta=1, t, \exp(-2\pi t\sqrt{-1})v)\in [0,1]\times S_t^1 \times\C.
		\end{align}		
		Then 		
%		Let $\nabla^L$ be the Hermitian connection on $L$. Then on $S_{\theta}^1\times S_t^1=(-\var, 1/2+\var)\times S_t^1$,
%		\begin{align}
%		\nabla^Ls_{\theta, t}=\omega_{\theta, t}s_{\theta, t}.
%		\end{align}
%		On $(1/2-\var, 1+\var)\times S_t^1$,
%		\begin{align}
%		\nabla^Ls_{\theta, t}'=\omega_{\theta, t}'s_{\theta, t}'.
%		\end{align}
%		Since near $\theta=1/2$, for $t$ fixed, 
%		$\nabla^Ls|_{1/2-\var<\theta<1/2+\var}=\nabla^Ls'|_{1/2-\var<\theta<1/2+\var}$, we have  $\omega|_{1/2-\var<\theta<1/2+\var}=\omega'|_{1/2-\var<\theta<1/2+\var}$.
%		
%		Near $\theta=0\sim \theta=1$, for $t$ fixed,
%		\begin{multline}
%		\omega'|_{1-\var<\theta<1+\var}=\omega|_{-\var<\theta<\var}-\exp(2\pi t\sqrt{-1})d\left(\exp(-2\pi t\sqrt{-1})\right)
%		\\
%		=\omega|_{-\var<\theta<\var}+2\pi \sqrt{-1}dt.
%		\end{multline}
%		We take 
		\begin{align}\label{eq:1.26}
		\nabla^L=d+2\pi \left(\theta-1/2\right)\sqrt{-1}dt
		\end{align} 
		is a Hermitian connection on $(L,h^L)$ (cf. \cite[p.124]{BF86II}).
		We choose the $\Z_2$-grading of $L$ such that $L_+=L$ and $L_-=0$.
	%	In this case, the first Chern form $c_1(L)=dtd\theta$.
		We consider the Spin structure on $S_t^1$ as the desired Spin$^c$ structure. Then we get an odd geometric family $\mF^L$ after choosing the natural geometric data. 
		%$(o^L\in H^0(W,\Z_2), T_{\pi}^H(S^1\times S^1), g^{TS^1})$.
		In fact, since $c_1(L)=dtd\theta$, $\ind(D(\mF^L))$ is a generator of $K^1(S^1)\simeq \Z$ by family index theorem. 

		d) Let $\mF%=(W, \mE, o(TZ), T_{\pi}^HW, g^{TZ}, h^{\mE}, \nabla^{\mE})\in
		\in\mathrm{F}_G^*(B)$.
		Let $p_1$ and $p_2$ be the projections onto the first and second parts of $B\times S^1$ respectively. We take $\mF^L$ as in c). Then $p_1^*\mF\times_{B\times S^1} p_2^*\mF^L$ is an equivariant geometric family over $B\times S^1$.	
		From Proposition \ref{prop:A.01} (cf. also the proof of \cite[Theorem 2.10]{BF86II}), for $\mF\in \mathrm{F}_G^1(B)$, there exists an inclusion $i:B\rightarrow B\times S^1$ such that $i^*\ind(D(p_1^*\mF\times_{B\times S^1} p_2^*\mF^L))=0$. Moreover, as an element of $K_G^1(B)$ in the sense of (\ref{d124}), by 
		an equivariant version of \cite[Proposition 6]{MP97b}, we have
		\begin{align}\label{d127}
		j\big(\ind(D(\mF))\big)=\ind(D(p_1^*\mF\times_{B\times S^1} p_2^*\mF^L)),
		\end{align}
		where $j$ is the map in (\ref{d124}).
		%Since $j$ is a ring homomorphism, we could prove (\ref{d170}) by (\ref{d127}).
		This example is essential in our construction of the higher spectral flow for even case. For the sake of reader's convenience, we will show more details in Appendix B.
	\end{example}
	
	We denote by $\mF\sim \mF'$ if $\ind(D(\mF))=\ind(D(\mF'))$. It is an equivalent relation and compatible with the semigroup structure. So $\mathrm{F}_G^*(B)/\sim$ is a semigroup and the map
		\begin{align}\label{d172}
		\ind: \mathrm{F}_G^*(B)/\sim\ \longrightarrow K_G^*(B)
		\end{align}
		is an injective semigroup homomorphism. 
		
		By Definition \ref{d041}, we have
		\begin{align}\label{d042}
		\ind(D(\mF^{\mathrm{op}}))=-\ind(D(\mF)).
		\end{align}
		After defining $-\mF:=\mF^{\mathrm{op}}$, the semigroup $\mathrm{F}_G^*(B)/\sim$ can be regarded as an abelian group.
		So, by (\ref{d042}), the equivariant index map in (\ref{d172}) is a group homomorphism.
		Note that  $K_G^*(B)$
		has a ring structure \cite{AtK},
		and by (\ref{d127}), (\ref{d170}) holds for any
		$\mF, \mF'\in  \mathrm{F}_G^*(B)$. Thus the equivariant index map in (\ref{d172}) is also a ring homomorphism. 
%In the following proposition, we will prove that
%it is a ring isomorphism.
In fact, it is even a ring isomorphism \cite[\S 2.5.5]{BS13}.
We rewrite the proof
	in our notation here for the completeness.
		
	\begin{prop}\label{d128}
		The equivariant index map $\ind$ in (\ref{d172}) is surjective. In other words, we have the $\Z_2$-graded ring isomorphism
		\begin{align}\label{d045}
		 \mathrm{F}_G^*(B)/\sim\ \simeq K_G^*(B).
		\end{align}
	\end{prop}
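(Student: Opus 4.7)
\emph{Setup.} The injectivity of $\ind$ on $\mathrm{F}_G^*(B)/\sim$ is built into the equivalence relation, so only surjectivity needs attention. I split the argument by parity, reducing the odd case to the even case via (\ref{d124}), and then verify compatibility with the additive inverse and the cup product.

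\emph{Even case.} Given $x\in K_G^0(B)$, write $x=[E_+]-[E_-]$ for some $G$-equivariant Hermitian vector bundles $E_\pm$ over $B$. By Example~\ref{d129}(a), the $\Z_2$-graded bundle $E=E_+\oplus E_-$, equipped with any $G$-invariant metric and connection and regarded as an equivariant geometric family $\mF$ with fiber $Z=\mathrm{pt}$, has vanishing Dirac operator and hence $\ind D(\mF)=[E_+]-[E_-]=x$.

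\emph{Odd case.} Given $x\in K_G^1(B)$, invoke the isomorphism (\ref{d124}), with the trivial $G$-action on $S^1$, to regard $x$ as a class $\tilde x\in\ker(i^*:K_G^0(B\times S^1)\to K_G^0(B))$. Applying the even case to $B\times S^1$ yields an even equivariant geometric family $\widetilde{\mF}\in\mathrm{F}_G^0(B\times S^1)$ with $\ind D(\widetilde{\mF})=\tilde x$. Now view $\widetilde{\mF}$ as an equivariant geometric family $\mF$ over $B$ by composing its projection $\widetilde W\to B\times S^1$ with $p_1:B\times S^1\to B$; the new fibers take the form $S^1\times\widetilde Z$, which are odd-dimensional once $\widetilde Z$ has been arranged to be even-dimensional (if not, cup with a point bundle to shift parity). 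To identify $\ind D(\mF)\in K_G^1(B)$ with $x$, I would cup $\mF$ with $\mF^L$ of Example~\ref{d129}(c), use the multiplicativity (\ref{d170}) to compute $\ind D(p_1^*\mF\cup p_2^*\mF^L)$ in $K_G^0(B\times S^1)$, and then apply (\ref{d127}) from Example~\ref{d129}(d) to match this computation with $\tilde x$ under (\ref{d124}). This identification of the push-forward along the trivial $S^1$-fibration with the inverse of (\ref{d124}) is the delicate step: it is an equivariant family-index-theorem computation that ultimately reduces to the $S^1$-Atiyah--Singer calculation underlying Example~\ref{d129}(c).

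\emph{Compatibility with the ring structure.} Additivity of $\ind$ is (\ref{d171}); setting $-\mF:=\mF^{\mathrm{op}}$ and invoking (\ref{d042}) promotes $\mathrm{F}_G^*(B)/\sim$ to an abelian group on which $\ind$ is a group homomorphism, and multiplicativity with respect to $\cup$ is (\ref{d170}). Combined with the surjectivity just established, this yields the $\Z_2$-graded ring isomorphism (\ref{d045}).
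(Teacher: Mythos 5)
Your even case is exactly the paper's, and your odd-case strategy (suspend $x\in K_G^1(B)$ to $\tilde x\in K_G^0(B\times S^1)$ via (\ref{d124}), realize $\tilde x$ by a $\Z_2$-graded bundle using the even case, then reinterpret this bundle as an odd equivariant geometric family over $B$ with fiber $S^1$) is the same idea as the paper's. The paper is more concrete: it invokes a result of Phillips to represent $x$ by a $G$-invariant unitary $F\in\End(V)\otimes\mathcal{C}^0(B,\C)$ for a finite-dimensional $G$-representation $V$, then builds the clutching bundle $W$ and trivial bundle $U$ over $B\times S^1$ by hand; your $E_+\oplus E_-$ plays the role of $W\oplus U$. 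A small simplification to your write-up: after applying the even case, $\widetilde Z$ is already a point, so the new fiber is exactly $S^1$ and the parity-shift caveat is unnecessary.

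The genuine gap is in the last step, the identification $\ind D(\mF)=x$. You propose to verify it by cupping $\mF$ with $\mF^L$, invoking multiplicativity (\ref{d170}), and then using (\ref{d127}). But (\ref{d127}) is an identity relating two descriptions of $\ind D(\mF)$ itself; it does not compute that class. And multiplicativity expresses $\ind D(p_1^*\mF\cup p_2^*\mF^L)$ in terms of $\ind D(\mF)$, which is what you are trying to determine, so this route is circular as stated. What is actually needed is an independent computation of the $S^1$-family index of $\mF$ (equivalently, that push-forward along the trivial $S^1$-fibration inverts the suspension isomorphism (\ref{d124})); the paper does not prove this either but cites \cite[Section 4.2.3]{MR2191484} for it. You correctly identify this as the delicate point, but the argument you sketch does not close it.
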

	\begin{proof}
		When $*=0$, we can get the proposition directly from Example \ref{d129} a) or b).
		
		When $*=1$, from the proof of Lemma \ref{lemma:1.04}, for any $[F]\in K_G^1(B)$, there exist equivariant complex vector bundles $W$ and $U$ such that
%		s a finite dimensional $G$-representation $V$, such that $[F]$ can be represented as a $G$-invariant unitary element $F\in\End(V)\otimes \cC^{\infty}(B,\C)$. For $(b,t,v)\in B\times [0,1]\times V$, the relation $(b,0,v)\sim (b,1, F(b)v)$ makes a $G$-equivariant smooth vector bundle $W$ over $B\times S^1$. Let $U=B\times S^1\times V$ be the $G$-equivariant trivial bundle. Then 
		$[W]-[U]\in K_G^0(B\times S^1)$ corresponds to $[F]\in K_G^1(B)$.
		Moreover, after taking the natural geometric data, we get an odd equivariant geometric family $\mF$ over $B$ with fibers $S^1$ and $\Z_2$-graded equivariant complex vector bundle $W\oplus U$. As in \cite[Section 2.2.2.3]{Bu09}, we have $\ind(D(\mF))=[F]\in K_G^1(B)$.
		
		The proof of Proposition \ref{d128} is completed.
	\end{proof}
		
	\begin{rem}\label{d153}
		Note that if we replace the Spin$^c$ condition of the geometric family by the general Clifford module condition (which is the setting in \cite{BS09,BS13}) or the Spin condition,
		Proposition \ref{d128} also holds. Since we don't use the language of Clifford modules here, our definition of $\mF^{\mathrm{op}}$ in Definition \ref{d041} is simpler than that in \cite{BS09,BS13}. In fact, in the sense of (\ref{d045}), they are the same.
	\end{rem}
	
	\subsection{Push-forward map}\label{s0104}
	
	In this subsection, we define the push-forward map in equivariant K-theory using the equivariant geometric families.
	
	Let $\pi_Y:V\rightarrow B$ be a $G$-equivariant smooth surjective proper submersion of compact manifolds with compact orientable fibers $Y$. We simply assume that the dimensions of all connected components of $Y$ have the same parity. 
	Let $o_Y\in H^0(V,\Z_2)$ be an orientation of the 
	relative tangent bundle $TY$.
	
	\begin{defn}(Compare with \cite[Definition 3.1]{BS13})\label{d154}
		An equivariant K-orientation of $\pi_Y$ is
	%	an orientation $o_Y\in H^0(V,\Z_2)$ and
		 an equivariant Spin$^c$ structure of $TY$.
%		We call the equivariant K-orientation is even (resp. odd) if for any connected component of fibers $Y$, the dimension of it is
%		even (resp. odd). 
		Let $\mathcal{O}_G(\pi_Y)$ be the set of equivariant K-orientations.
	\end{defn}

	Suppose that $\pi_Y$ has an equivariant K-orientation $\mathcal{O}_Y\in \mathcal{O}_G(\pi_Y)$. For $j=0,1$, let $N(j):=j$ (resp. $(j+1)\,\mathrm{mod}\,2$) if $\dim Y$ is even (resp. odd). We will use Proposition \ref{d128} to define
	the push-forward map of equivariant K-groups $\pi_Y!: K_G^{j}(V)\rightarrow K_G^{N(j)}(B)$ as follows.
	
	Let $\pi_X:W\rightarrow V$ be the submersion with compact orientable fibers $X$. 
	Let
	\begin{align}\label{d155}
	\mF_X=(W, L_X, E, o_X, T_{\pi_X}^HW, g^{TX}, h^{L_X}, \nabla^{L_X}, h^E, \nabla^{E})
	\end{align}
	be a $G$-equivariant geometric family in $\mathrm{F}_G^{j}(V)$. Then $\pi_Z:=\pi_Y\circ \pi_X: W\rightarrow B$ is a smooth
	submersion with compact orientable fibers $Z$. 
	We have the diagram of smooth fibrations:
	
	\begin{center}\label{e02001}
		\begin{tikzpicture}[>=angle 90]
		\matrix(a)[matrix of math nodes,
		row sep=2em, column sep=2.5em,
		text height=1.5ex, text depth=0.25ex]
		{X&Z&W\\
			&Y&V&B.\\};
		\path[->](a-1-1) edge (a-1-2);
		\path[->](a-1-2) edge node[left]{\footnotesize{}} (a-2-2);
		\path[->](a-1-2) edge (a-1-3);
		\path[->](a-2-2) edge (a-2-3);
		\path[->](a-1-3) edge node[left]{\footnotesize{$\pi_X$}} (a-2-3);
		\path[->](a-2-3) edge node[above]{\footnotesize{$\pi_Y$}} (a-2-4);
		\path[->](a-1-3) edge node[above]{\footnotesize{$\pi_Z$}} (a-2-4);
		\end{tikzpicture}
	\end{center}

	Set $T_{\pi_X}^HZ:=T_{\pi_X}^HW\cap TZ$. Then we have the splitting of smooth vector bundles over $W$,
	\begin{align}\label{e02003}
	TZ=T_{\pi_X}^HZ\oplus TX,
	\end{align}
	and
	\begin{align}\label{e02002}
	T_{\pi_X}^HZ\cong \pi_X^*TY.
	\end{align}
		Let $o_Z:=\pi_X^*o_Y\cup o_X\in H^0(W,\Z_2)$.
	Since $TY$ and $TX$ have equivariant Spin$^c$ structures, so is $TZ$. Let $L_Y$ be the equivariant complex line bundle associated with the equivariant Spin$^c$ structure of $TY$.	
	Set
	\begin{align}\label{d156}
	L_Z:=\pi_X^*L_Y\otimes L_X.
	\end{align}
	
	%Then $(W, L_Z, E, o_Z)$ forms a topological structure of a 
	%new equivariant geometric family.
	
	Let $g^{TY}$ be a $G$-invariant Riemannian metric on $TY$.
	Let $h^{L_Y}$ be a $G$-invariant Hermitian metric on $L_Y$
	and $\nabla^{L_Y}$ be a $G$-invariant Hermitian connection
	on $(L_Y, h^{L_Y})$.
	Take geometric data 
	$(T_{\pi_Z}^HW, g^{TZ}, h^{L_Z}, \nabla^{L_Z})$ of $\pi_Z$ such that $T_{\pi_Z}^HW\subset T_{\pi_X}^HW$, $g^{TZ}=\pi_X^*g^{TY}\oplus g^{TX}$, $h^{L_Z}=\pi_X^*h^{L_Y}\otimes h^{L_X}$ and $\nabla^{L_Z}=\pi_X^*\nabla^{L_Y}\otimes 1+1\otimes \nabla^{L_X}$. We get a new equivariant geometric family over $B$,
	\begin{align}\label{d073}
	\mF_{Z}:=(W, L_Z, E, o_Z, T_{\pi_Z}^HW, g^{TZ}, h^{L_Z}, \nabla^{L_Z}, h^E, \nabla^{E}).
	\end{align}
	We write $\mF_Z=\pi_Y!(\mF_X)$.
	%In the sense of (\ref{d045}), we will prove the following theorem.
	
	\begin{thm}\label{d157}
		For equivariant K-orientation $\mathcal{O}_Y\in \mathcal{O}_G(\pi_Y)$ fixed,
		the push-forward map
		\begin{align}\label{d158}
		\begin{split}
		\pi_Y!: K_G^{j}(V)&\rightarrow K_G^{N(j)}(B),
		\\
		[\mF_X]\quad&\mapsto [\mF_Z]
		\end{split}
		\end{align}
		is a well-defined group homomorphism and independent of the geometric data
		$(T_{\pi_Z}^HW, g^{TY}, h^{L_Y}, \nabla^{L_Y})$.
	\end{thm}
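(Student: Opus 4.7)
The plan is to reduce Theorem \ref{d157} to the following multiplicativity identity for the equivariant family index:
\begin{align*}
\ind(D(\mF_Z)) = (\pi_Y)_!\bigl(\ind(D(\mF_X))\bigr) \in K_G^{*'+*}(B),
\end{align*}
where $(\pi_Y)_!$ denotes the topological equivariant Gysin map associated to the equivariant Spin$^c$-oriented submersion $\pi_Y:V\to B$. Since the right-hand side depends only on the class $\ind(D(\mF_X))\in K_G^{*'}(V)$ and on the K-orientation $\mathcal{O}_Y$, this single identity delivers simultaneously the independence from the geometric data $(T_{\pi_Z}^HW, g^{TZ}, h^{L_Z}, \nabla^{L_Z})$ and the well-definedness of the descent to $\mathrm{F}_G^{*'}(V)/\!\sim\,\simeq K_G^{*'}(V)$ via Proposition \ref{d128}: if $\ind(D(\mF_X)) = \ind(D(\mF_X'))$, then $\ind(D(\mF_Z)) = \ind(D(\mF_{Z'}))$, hence $\mF_Z \sim \mF_{Z'}$.

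For independence from the geometric data of $\pi_Z$ alone, a self-contained homotopy argument suffices: the set of admissible quadruples compatible with $\pi_Z$, with $T^H_{\pi_X}W$, and with the prescribed Spin$^c$ structure is convex under $G$-invariant affine combinations, and along any such path the fiberwise Dirac family $D(\mF_Z)$ varies continuously in the gap topology, so the equivariant index is constant. For the multiplicativity identity itself, I would use the adiabatic-limit technique adapted to the equivariant setting: rescale the vertical metric by $g^{TZ}_T = T^2\pi_X^*g^{TY}\oplus g^{TX}$ for $T\geq 1$. The equivariant index of $D(\mF_{Z,T})$ is independent of $T$, and as $T\to\infty$ the fiberwise spectrum splits into a large-eigenvalue part governed by $T\cdot D(\mF_X)$ along $X$ and a small-eigenvalue part which, in the limit, becomes the fiberwise Dirac operator on $V$ twisted by the virtual index bundle of the vertical family $D(\mF_X)$. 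This limiting description identifies $\ind(D(\mF_Z))$ with $(\pi_Y)_!(\ind(D(\mF_X)))$.

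The group-homomorphism property follows independently from (\ref{d171}): the construction (\ref{d073}) respects disjoint unions when one chooses coherent geometric data on $\pi_Z$, so $\mF_X + \mF_X'$ pushes forward to $\mF_Z + \mF_{Z'}$, and hence $\pi_Y!([\mF_X] + [\mF_X']) = \pi_Y!([\mF_X]) + \pi_Y!([\mF_X'])$ in $K_G^{*'+*}(B)$.

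The main obstacle lies in making the adiabatic-limit argument rigorous when the vertical kernel of $D(\mF_X)$ does not form a smooth $G$-equivariant vector bundle over $V$, which is generically the case and is further complicated by the possibly non-constant isotropy of the $G$-action. The standard remedy is a stabilization: add to $\mF_X$ an auxiliary equivariant geometric family so that the sum has a uniform spectral gap with a genuine equivariant kernel bundle, prove the identity for the stabilized family, and recover the general case by additivity. Equivalently, one may simply invoke the equivariant Atiyah-Singer family index theorem for a composition of $G$-equivariant Spin$^c$-oriented submersions, which packages the same result topologically and sidesteps the analytic subtleties altogether.
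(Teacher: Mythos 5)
Your proposal is correct in outline, but it deliberately proves something strictly stronger than the paper does, and the paper's leaner route is worth contrasting. You reduce everything to the full multiplicativity identity $\ind(D(\mF_Z)) = (\pi_Y)_!\bigl(\ind(D(\mF_X))\bigr)$, i.e.\ to identifying the geometric push-forward with the topological equivariant Gysin map under composition of $K$-oriented submersions. That identity does indeed subsume well-definedness, independence of the auxiliary geometric data, and group-homomorphism all at once, and it can be established either by the adiabatic-limit-plus-stabilization argument you sketch or by invoking the (known) composition formula for the equivariant Atiyah--Singer family index. The price is that you must confront the generic failure of $\ker D(\mF_X)$ to form an equivariant vector bundle, which forces the stabilization step; this is nontrivial analytic/geometric overhead.

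The paper avoids this issue entirely by a group-theoretic reduction: since $\pi_Y!$ is additive and $[\mF_X^{\mathrm{op}}]=-[\mF_X]$, well-definedness follows once one shows only the special implication $\ind(D(\mF_X))=0\Rightarrow\ind(D(\mF_Z))=0$ (Lemma~\ref{d089}); the independence-of-geometric-data statement and the homomorphism property come directly from homotopy invariance of the equivariant family index, as you also note. In the $\ind=0$ case, Proposition~\ref{d006} produces a perturbation operator $A_X$ with $\ker(D(\mF_X)+A_X)=0$, so the adiabatic limit $g_T^{TZ}=\pi_X^*g^{TY}\oplus T^{-2}g^{TX}$ sees a uniform spectral gap along $X$ and gives $\ker(D(\mF_{Z,T})+1\widehat{\otimes}TA_X)=0$ for $T$ large; no kernel bundle, no stabilization, no appeal to the full topological multiplicativity. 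Your proof would work, and it buys the explicit identification with the topological Gysin map, but the paper's argument is more economical and self-contained given the machinery (perturbation operators, Proposition~\ref{d006}) already developed.
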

	\begin{proof}
		We firstly assume that the map $\pi_Y!$ is well-defined.
		Then the remaining results follow from the definition
		of the equivariant family index.
		
		The well-defined property of $\pi_Y!$ will be proved in Section \ref{s0209} later.
	\end{proof}
	
	Let $\pi_U:B\rightarrow S$ be a $G$-equivariant smooth surjective proper submersion of compact manifolds with compact oriented fibers $U$ and an
	equivariant K-orientation $\mathcal{O}_U$. 
	Then $\pi_A:=\pi_U\circ \pi_Y: V\rightarrow S$ is a $G$-equivariant smooth submersion with an
	equivariant K-orientation constructed by $\mathcal{O}_Y$ and $\mathcal{O}_U$.
	From the construction of the push-forward map and Theorem \ref{d157}, the following theorem is obvious.
	\begin{thm}\label{d194}
		We have the equality of homomorphisms %$K_G^*(V)\rightarrow K_G^*(S)$,
		\begin{align}\label{d159}
		\pi_A!=\pi_U!\circ\pi_Y!: K_G^*(V)\rightarrow K_G^*(S).
		\end{align}		
	\end{thm}
%
%\begin{prop}\label{prop:1.11}
%The $\Z_2$-graded group isomorphism 
%\begin{align}
%\ind: \mathrm{F}_G^*(B)/\sim\ \longrightarrow K_G^*(B)
%\end{align}
%is a ring isomorphism.
%\end{prop}
%		
	\section{Equivariant higher spectral flow and equivariant eta form}\label{s02}
	
	In this section, we extend the Melrose-Piazza spectral section to the equivariant case, introduce the equivariant version of Dai-Zhang higher spectral flow for arbitrary dimensional fibers and use them to prove the anomaly formula and the functoriality of the equivariant Bismut-Cheeger eta forms. In this section, we use the notation in Section \ref{s01}. 
	
	In Section \ref{s0201}, 
	we introduce the equivariant version of the spectral section
	and prove the main properties.
	In Section \ref{s0209}, we complete the proof of the well-definedness of the push-forward map in Theorem \ref{d157}.
	In Section 2.3, 
	we define the higher 
	spectral flow for fibrations with even-dimensional fibers and extend 
	 the higher spectral flow to the equivariant 
	case. Moreover, we prove that the equivariant K-group could be 
	generated by the equivariant higher spectral flows. In Section \ref{s0202},
	we explain the family local index theorem. In Section \ref{s0203},
	we define the equivariant eta form associated with a perturbation
	operator. In Section \ref{s0204}, we prove the anomaly formula 
	of equivariant eta forms in odd case. In Sections \ref{s0205}-\ref{s0207},
	we prove the functoriality of equivariant eta forms and use it to 
	prove the anomaly formula in even case.
	
	\subsection{Equivariant spectral section}\label{s0201}
	
	In this subsection, we extend the spectral section
	of Melrose-Piazza \cite{MP97a,MP97b} and the main properties of 
	them to the equivariant case.
%	 and give a model of equivariant K-theory using the equivariant higher spectral flow.
		
	\begin{defn}(Compare with \cite[Definition 1.6]{DZ98})\label{d199}
			Let $\mF\in \mathrm{F}_G^*(B)$ and
			at least one component of the fiber has nonzero dimension.
			An \textbf{equivariant $B$-family} on $\mF$ is a smooth family of 
			self-adjoint pseudodifferential operators
			$D=\{D_b \}_{b\in B}$ on the fibers 
			of $\mF$, which commutes with the $G$-action
			and is first order on nonzero dimensional fibers, such that
%			
%			We call an operator $D$ is an equivariant $B$-family on $\mF$ if it is a 1st-order pseudodifferential operator on $\mF$ along the fibers, which is
%			self-adjoint, fiberwise elliptic and commutes with the $G$-action, such that
			
			(a) it preserves the $\Z_2$-grading of $E$ when the fiber is odd dimensional;
			
			(b) it anti-commutes with the $\Z_2$-grading of $\mS_Z\widehat{\otimes}E$ when the fiber is even dimensional.
			
	If the dimension of the fiber is zero,
	an equivariant $B$-family is a
	self-adjoint endomorphism of $E$ which
	commutes with the $G$-action and
	anti-commutes with the $\Z_2$-grading of $E$.

	\end{defn}
	
%	Remark that in the non-equivariant odd dimensional case, 
%	our definition here is slightly different from 
%	\cite[Definition 1.6]{DZ98} that
%	we use the pseudodifferential operator instead of the differential operator.
	
%	Obviously, the fiberwise Dirac operator $D(\mF)$ is an equivariant $B$-family
%	and for any  $B$-family of equivariant Dirac type operator $D$,
%	if all the dimensions of the fibers are nonzero,
%	 we have $\ind(D)=\ind(D(\mF))\in K_G^*(B)$.
	 	
	 If the principal symbol of $D_b$ is the same as that of
	 the fiberwise Dirac operator $D(\mF)|_{Z_b}$
	 for any $b\in B$, we call this equivariant $B$-family $D$ a $B$-family
	 of equivariant Dirac type operator.
	 In this case,
	 we have $\ind(D)=\ind(D(\mF))\in K_G^*(B)$.
	 Recall that if the fiber is a point, the fiberwise
	 Dirac operator is zero.
		
	\begin{defn}\label{d003}(Compare with \cite[Definition 1]{MP97a} and \cite[Definition 1]{MP97b})
		An equivariant Melrose-Piazza spectral section of
		 an equivariant $B$-family
		$D=\{D_b \}_{b\in B}$ is a continuous family of self-adjoint pseudodifferential
		projections $P_b$ on the $L^2$-completion of the domain of $D_b$,
%		
%		The equivariant Melrose-Piazza spectral section is a family of
%		self-adjoint pseudodifferential projections $P$ with respect to an equivariant $B$-family $D$, 
		which commutes with the $G$-action,
		such that 
		
		(a) for some smooth function $f: B\rightarrow \R$ (depending on $P$)
		and every $b\in B$,
		\begin{align}\label{d004}
		D_b u=\lambda u\Longrightarrow
		\left\{
		\begin{aligned}%{ll}
		&P_bu=u,  &\hbox{if $\lambda> f(b)$,} \\
		&P_bu=0,  &\hbox{if $\lambda<-f(b)$;}
		\end{aligned}
		\right.
		\end{align}
				
		(b) when the fiber is odd dimensional, $P$ commutes with the $\Z_2$-grading of $E$;
		
		(c) when the fiber is even dimensional,
		\begin{align}\label{d005}
		\tau^{\mS_Z\widehat{\otimes}E}\circ P+P\circ\tau^{\mS_Z\widehat{\otimes}E}=\tau^{\mS_Z\widehat{\otimes}E},
		\end{align}
		where $\tau^{\mS_Z\widehat{\otimes}E}$ is the $\Z_2$-grading of $\mS_Z\widehat{\otimes}E$.
	\end{defn}

The following proposition is the equivariant extension of the results in \cite{MP97a,MP97b}. Remark that in our setting the dimension of the fiber
might be zero. In the proof of this proposition, we will use the equivariant
version of the Fredholm theory for fiberwise elliptic operators. For the 
references, see \cite{ASe04} and \cite[Appendix A.5]{FHT11}.
We will also show some details in Appendix B.
	
	\begin{prop}\label{d006}
		Let $\mF\in\mathrm{F}_G^*(B)$ and $D$ be an equivariant $B$-family on $\mF$. %If $\mF\in\mathrm{F}_G^1(B)$, we assume that $E_-=0$ in $\mF$.
		
		(i) (Compare with \cite[Proposition 1]{MP97a} and \cite[Proposition 2]{MP97b})
		If there exists an equivariant spectral section of $D$ on $\mF\in \mathrm{F}_G^0(B)$ (resp. $\mathrm{F}_G^1(B)$), then  $\ind (D)=0 \in K_G^0(B)$ (resp. $K_G^1(B)$). Conversely, if $\mF\in \mathrm{F}_G^0(B)$ (resp. $\mathrm{F}_G^1(B)$), $\ind (D)=0 \in K_G^0(B)$ (resp. $K_G^1(B)$) and at least one component of the fibers has the nonzero dimension,  there exists an equivariant spectral section of $D$.
		
		(ii) (Compare with \cite[Proposition 2]{MP97a})
		For $\mathcal{F}\in \mathrm{F}_G^1(B)$,
		given equivariant spectral sections $P$, $Q$ of $D$, there exists an equivariant spectral section $R$ of $D$ such
		that $PR=R$ and $QR=R$. %Such an equivariant spectral section will be called a \textbf{majorizing equivariant		spectral section.}   
		We say that $R$ majors $P$, $Q$.
				
		(iii) (Compare with \cite[Lemma 8]{MP97a} and \cite[Lemma 1]{MP97b})
		If there exists an equivariant spectral section $P$ of $D$,
		then there exists a family of self-adjoint equivariant smoothing operators $A_{P}$
		(when the dimension of the fibers are zero, it descends to a
		self-adjoint equivariant endmorphism
		of the complex vector bundle) with range in a finite
		sum of eigenspaces of $D$ such that $D+A_{P}$
		is an \textbf{invertible} 
		equivariant $B$-family and $P$ is the Atiyah-Patodi-Singer (APS) projection onto the eigenspaces of the positive spectrum of $D+A_P$.

	\end{prop}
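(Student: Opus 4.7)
The plan is to follow Melrose-Piazza \cite{MR1472895,MR1484046} closely and equivariantize each step. The key structural fact is that since $D$ commutes with the $G$-action, every spectral projection $P_{(-\infty,c]}(D_b)$ is automatically $G$-equivariant; hence whenever the Melrose-Piazza construction produces a projection from spectral data, one gets an equivariant projection for free, provided the auxiliary choices (open covers, partitions of unity, correction operators) are made $G$-invariantly. Invariant open covers exist by saturating under the $G$-action, invariant partitions of unity by Haar averaging, and any local bundle-theoretic gadget can be symmetrized the same way.

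For part (iii), I would start with the defining function $f$ of $P$ and choose a $G$-invariant smooth $c\geq f+1$ with $\pm c(b)\notin\mathrm{spec}(D_b)$ for all $b\in B$. Then $\Pi:=P_{[-c,c]}(D)$ is a $G$-equivariant finite-rank smooth family, and Definition~\ref{d003}(a) gives a $G$-equivariant splitting $\mathrm{Im}(\Pi)=\mathrm{Im}(P\Pi)\oplus\mathrm{Im}((1-P)\Pi)$. A standard recipe, such as $A_P:=C\,\Pi\bigl(2P-1\bigr)\Pi$ with $C>\sup c$, yields a $G$-equivariant smoothing operator such that $D+A_P$ is invertible with APS projection equal to $P$; the $\Z_2$-grading conditions (b) and (c) propagate from $P$ to $A_P$ automatically. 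The ``only if'' direction of (i) is then immediate: $D+A_P$ is invertible and $A_P$ is compact, so $\ind(D)=\ind(D+A_P)=0$ in $K_G^*(B)$. For part (ii), given two equivariant spectral sections $P,Q$ on an odd family, I pick a $G$-invariant $c$ strictly larger than both defining functions with $\pm c\notin\mathrm{spec}(D_b)$ everywhere; the $G$-equivariant projection whose range is $\mathrm{Im}(P_{(c,\infty)}(D))$ together with $(\mathrm{Im}(P)+\mathrm{Im}(Q))\cap\mathrm{Im}(\Pi)$, for $\Pi:=P_{[-c,c]}(D)$, majorizes both by construction.

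The converse of part (i) is the main technical content. Given $\ind(D)=0$ together with a fiber component of positive dimension, I would choose a finite $G$-invariant cover $\{U_\alpha\}$ of $B$ and invariant constants $c_\alpha$ with $\pm c_\alpha\notin\mathrm{spec}(D_b)$ for $b\in U_\alpha$. The projections $\Pi_\alpha^+:=P_{(c_\alpha,\infty)}(D)$ are $G$-equivariant local candidates, differing on overlaps by finite-rank equivariant spectral bundles; the resulting \v{C}ech-type obstruction to globally reconciling them is precisely $\ind(D)\in K_G^*(B)$. The positive-dimensional fiber assumption supplies, via an equivariant Swan-type argument, enough equivariant spectral eigenbundles in the spectral corridor to absorb any finite-dimensional $G$-representation; combined with vanishing of $\ind(D)$, this allows equivariant stabilization and gluing to a global $G$-equivariant projection $P$ satisfying (\ref{d004}). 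Conditions (b) in the odd case and (\ref{d005}) in the even case are then imposed by symmetrizing with respect to $\tau$. The main obstacle in the whole proof is precisely this gluing step: checking that the local obstruction really represents the equivariant family index, and that the positive-dimensional fiber hypothesis makes equivariant stabilization possible. Once those are in hand, everything else reduces to routine equivariantization of the arguments in \cite{MR1472895,MR1484046}.
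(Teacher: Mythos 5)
Your approach for parts (ii) and (iii) rests on choosing a $G$-invariant smooth function $c\geq f+1$ with $\pm c(b)\notin\mathrm{spec}(D_b)$ for every $b\in B$ simultaneously. This is a genuine gap: such a $c$ need not exist even when an equivariant spectral section does. The concept of a spectral section was introduced by Melrose--Piazza precisely to handle families of operators for which no smooth family of spectral cuts exists. A concrete obstruction: take any equivariant $B$-family $D_0$ with $\ind(D_0)\neq 0$ and form $D=D_0\oplus(-D_0)$; then $\ind(D)=0$ so an equivariant spectral section exists, but any continuous $c$ avoiding $\mathrm{spec}(D_b)$ would also avoid $\mathrm{spec}(D_{0,b})$, making $P_{(c,\infty)}(D_0)$ a spectral section for $D_0$ and forcing $\ind(D_0)=0$, a contradiction. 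Consequently $\Pi:=P_{[-c,c]}(D)$ in your formula $A_P:=C\,\Pi(2P-1)\Pi$ is in general not a smooth (or even continuous) family of finite-rank operators, so the construction breaks down. The paper's proof of (iii) avoids this by constructing auxiliary equivariant spectral sections $Q$, $R$ via part (ii) (taking $Q_b=0$ on $\mathcal{P}_{\lambda\leq s,b}$ and $R_b=I$ on $\mathcal{P}_{\lambda\geq -s,b}$ for suitable constant $s$), then forming the explicit smooth perturbation $\widetilde{D}=QDQ+sPR(I-Q)+(I-R)D(I-R)-s(I-P)R(I-Q)$, no spectral-gap assumption required; and the paper's proof of (ii) works directly on the finitely many eigenfunctions of $PDP$ that are not eigenfunctions of $D$, again without a global gap.

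Two further points. In your (ii), even granting the existence of $c$, the projection with range $\mathrm{Im}(P_{(c,\infty)}(D))\oplus\bigl[(\mathrm{Im}(P)+\mathrm{Im}(Q))\cap\mathrm{Im}(\Pi)\bigr]$ would not satisfy $PR=R$: a vector lying in $\mathrm{Im}(Q)\cap\mathrm{Im}(\Pi)$ but not in $\mathrm{Im}(P)$ would violate the condition. The majorizing $R$ must satisfy $\mathrm{Im}(R)\subseteq\mathrm{Im}(P)\cap\mathrm{Im}(Q)$, so if anything you would want an intersection rather than a sum, and in fact $R=P_{(c,\infty)}(D)$ alone would suffice under your hypothesis. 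Finally, for the converse direction of (i) your outline is broadly aligned with the paper's (both reduce to the equivariant version of Melrose--Piazza's Proposition 1, with all auxiliary choices made $G$-invariantly by averaging); the paper simply invokes the $G$-homotopy of $D/(1+D^2)^{1/2}$ to a constant when $\ind=0$, and the even case further uses the odd case together with an explicit reduction of $\ind(D)$ to a difference class $[N_+]-[N_-]$ and a stabilization argument, which you gesture at but do not carry out.
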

	\begin{proof}
%		The proofs of these properties are almost the same as those in \cite{MP97a,MP97b}.
%		We sketch them here for the completeness.
		
		Case 1: Let $\mathcal{F}\in \mathrm{F}_G^1(B)$.
We use the notation of Appendix B in this part of the proof.

%		Since the Hilbert bundle is trivial, we can regard $Q$ as a continuous family of self-adjoint bounded operators $\{Q_b\}_{b\in B}$ on $L^2(Z,\mS_Z\widehat{\otimes} E)$.
We write $\mF=(W,o,E)$, $E=E_+\oplus E_-$ and omit other data for simplicity. 
Set $\mF_{+}=(W,o,E_{+}\oplus \{0\})$ and $\mF_-=(W,o, \{0\}\oplus E_-)$. Then 
$\ind(D(\mF))=\ind(D(\mF_++\mF_-))=\ind(D(\mF_+))+\ind(D(\mF_-))$.
%$\mF=\mF_+\oplus\mF_-$.
Set $\mF^{\mathrm{re}}_-:=(W,-o, E_-\oplus \{0\})$, where $-o$ is the reversion of the 
orientation $o$ and $E_-$ is equipped with the positive $\Z_2$-grading. 
Then there is a natural bijective map
$\mF_-\rightarrow \mF_-^{\mathrm{re}}$ by identification.
Note that $\ind(D(\mF_-))=\ind(D(\mF_-^{\mathrm{re}}))$.
We have $\ind(D(\mF))=\ind(D(\mF_++ \mF_-^{\mathrm{re}}))$.
So using this bijective map, we only need to prove our proposition
in odd case
when $E_-=0$ in $\mF$.

		(i) Let $T=D/(1+D^2)^{1/2}$. Then $T$ is bounded,  $G$-equivariant
				 and $\ind(T)=\ind(D)\in K_G^1(B)$. 
		As in Appendix B, $\sqrt{-1}T$ can be extended to an equivariant map from $B$ to
		 $\mathrm{Fred}^1(L^2(G)\otimes H)$, where $H$ is a separable Hilbert space.
%		by taking the identity map on the complement of $\mE$
%		in $B\times L^2(G)\otimes C(\R)\otimes H$.
		Assume that there exists an equivariant spectral section $P$ of $D$.
		It could be extended on $L^2(G)\otimes C(\R)\otimes H$
		in the same way as $T$, which is also denoted by $P$.
		From the definition of the equivariant spectral section,
	    $PT(1-P)$ and $(1-P)TP$ are $G$-equivariant self-adjoint 
		finite rank operators. Let $K=(1+D^2)^{-1/2}$ on $\mE$. Then $K$ is a $G$-equivariant
		self-adjoint compact operator on $L^2(G)\otimes C(\R)\otimes H$
		by taking zero on the complement of $\mE$.
		Thus from the $G$-homotopy invariance, the equivariant family index of $T$ in $K_G^1(B)$ is the same as
		that of
		$P(T+rK)P+(1-P)(T-rK)(1-P)$ for $r>0$. When $r$ is large enough, for any $b\in B$, $P_b(T_b+rK_b)P_b$ is positive and $(1-P_b)(T_b-rK_b)(1-P_b)$ is negative.
		Therefore we have $\ind(T)=0\in K_G^1(B)$.
		
		If $\ind(D)=\ind(T)=0\in K_G^1(B)$, we modify the construction
		of the spectral section in the proof of
		 \cite[Proposition 1]{MP97a}. 
		All equivariant operators here are regarded as 
		families of equivariant operators
		acting on $L^2(G)\otimes H$.
		Since $\ind(T)=0\in K_G^1(B)$, from (\ref{eq:2.03}),
		$\sqrt{-1}T$ is $G$-homotopic to an invertible element
		in $\mathrm{Fred}^1(L^2(G)\otimes H)$ through 
		$\sqrt{-1}T_t\in \mathrm{Fred}^1(L^2(G)\otimes H)$, $t\in [0,1]$.
		As in \cite{MP97a}, all operators in these families have discrete
		spectrum in some fixed open interval $(-\var,\var)$, $0<\var<1$.
		Choose $\chi\in \cC^{\infty}(\R)$ with $\chi(\lambda)=0$ if
		$\lambda<0$ and $\chi(\lambda)=1$ if $\lambda>\var/2$.
		Set $J=\chi(T)$. Then $J$ is $G$-equivariant and smooth on
		$b\in B$. From the $G$-homotopy above, we could construct a smooth
		family of $G$-equivariant projections $P'$ on $L^2(G)\otimes H$
		in the same way as in the proof of \cite[Proposition 1]{MP97a}
		such that $J-P'$ has finite rank and
		the range of  $J-P'$ lies in $\mE$.
		By taking spectral cuts as in the proof of \cite[Theorem 3]{LP03},
		we could obtain an equivariant projection $P$ which differs
		from $J$ by an equivariant operator whose range lies
		 in the span of a finite
		number of eigenfunctions of $T$ on $\mE$ for each $b\in B$
		(see also the proof of \cite[Proposition 3.7]{Z05}).
		So $P|_{\mE}$ is an equivariant spectral section.

%		
%		Note that all operators in this process commute with the $G$-action.
%		
%		The only thing we need to take care is the smoothing truncating 
%		part commutes with the $G$-action.   
%		 Thus this 
%		spectral section is $G$-equivariant.
		
		(ii) We extend the equivariant spectral section $P$ on
		the equivariant trivial Hilbert bundle as before and
		 consider the family of operators $PTP$ on the range of $P$. 
		These are equivariant self-adjoint operators and from (\ref{d004}),
		there exists $N>0$, such that all but the first $N$ eigenfunctions of $PTP$ are eigenfunctions of $D$. Since $B$ is compact,
		we could take $0<a_1<1$ such that the first $N$ eigenvalues
		of $PTP$ are all less than $a_1$ for any $b\in B$.	
		Take $a_2\in (a_1,1)$ 
		and choose $\chi_1\in \cC^{\infty}(\R)$ with $\chi_1(\lambda)=0$ if
		$\lambda\leq a_1$ and $\chi_1(\lambda)=1$ if $\lambda\geq a_2$.
Then for $M$ large enough, the range of $P-\chi_1(T)$
is an equivariant subbundle of the range of $P$
(cf. \cite[Lemma 9.9]{BGV04}) 
 such that it contains the first $N$ eigenfunctions and is 
 contained in the span of the first $M$ eigenfunctions
  of $PTP$. 
% In fact, since the group action is along the fiber and commutes with $PDP$,
% we only need to construct the subbundle for each weight, which is already
% done in the non-equivariant case in the proof of \cite[Proposition 2]{MP97a}.
% Remark that since $N$ is finite, we only need to consider finite numbers of 
% weights.
% \footnote{In fact, for any $b\in B$, there exists $\lambda>N$ such that $\lambda$
% is not an eigenvalue of $PDP$ on $Z_b$. Then there exists an open 
% neighborhood $U_{\lambda}$ of $b$, diffeomorphic to a ball, such that
% for any $b'\in U_{\lambda}$, $\lambda$
% is not an eigenvalue of $PDP$ on $Z_{b'}$.
% Since $B$ is compact, we could take a finite open covering of $B$
% by such open subsets: $B=\cup U_{\lambda_i}$, $N<\lambda_1\leq\cdots
% \leq \lambda_m$. Let $\{\phi_i\}$ be a corresponding partition of unity. 
% Let $H_{\lambda_i}$ be the vector bundle over 
% $U_{\lambda_i}$ spanned by the eigenfunctions whose corresponding 
% eigenvalue is smaller than $\lambda_i$.
% It is an equivariant trivial vector bundle. Let $k_i=\dim H_{\lambda_i}$.
%  Let $s_{i,1},\cdots,s_{i,k_i}$ be nonzero sections of $H_{\lambda_i}$,
%  which span $H_{\lambda_i}|_{b}$ for any $b\in U_{\lambda_i}$.}
  Let $R$ be the orthogonal projection on the complement of this subbundle
  in $\mE$. Then $R$ is an equivariant spectral section such that $PR=R$.
		If the integer $N$ is chosen large enough, then the projection $R$ will have range contained in the intersection of the ranges of any two given equivariant spectral sections $P$ and $Q$. So $QR=R$. 
				
		(iii) Let $\mathcal{P}_{\lambda\in [a_1, a_2],b}(D_b)$ be the span of the eigenfunctions corresponding to the eigenvalues $\lambda\in [a_1,a_2]$ of $D_b$.
		Since $B$ is compact, we can choose $s>0$, such that $P$ is an equivariant spectral section for $f(b)\equiv s$. By the proof of (ii), we can choose equivariant spectral sections $R'$, $R''$, such that for any $b\in B$, $R'_b=0$ on $\mathcal{P}_{\lambda\leq s,b}(D_b)$ and
		$R''_b=I$ on $\mathcal{P}_{\lambda\geq -s,b}(D_b)$. Then the operator
		\begin{align}\label{d173}
		\widetilde{D}=R'DR'+sPR''(I-R')+(I-R'')D(I-R'')-s(I-P)R''(I-R')
		\end{align}
		is an invertible equivariant $B$-family
		(cf. \cite[(8.3)]{MP97a}). Then $A_P=\widetilde{D}-D$
		satisfies all conditions.
		
		\
		
		Case 2: Let $\mathcal{F}\in \mathrm{F}_G^0(B)$ and
		at least one component of the fibers has nonzero dimension.
		
		Let 
%		$\mS_Z\widehat{\otimes}E=(\mS_Z\widehat{\otimes}E)_+
%		\oplus (\mS_Z\widehat{\otimes}E)_-$ and 
		$D_{\pm}:=
		D|_{(\mS_Z\widehat{\otimes}E)_{\pm}}$. Let $S$ be a 
		first order positive equivariant elliptic pseudodifferential
		operator. Then in the sense of (\ref{eq:2.03}),
		 $D$ is $G$-homotopic to 
		$
		\left(\begin{array}{cc}
		S & D_- \\
		D_+ & -S
		\end{array}
		\right),
		$
		which is invertible. 
		Thus the equivariant $K^1$-index of the whole self-adjoint family $D$ vanishes. By the same process in the proof of (i) in the odd case, there exists an equivariant spectral section $P'$ in the odd sense, which means that it is an equivariant spectral section without the condition (\ref{d005}).
		
		(iii)
		By the proof of (ii) for the odd case, we could choose $P'$, which is an equivariant spectral section in the odd sense, such that $P'DP'$ is positive on the range of $P$. 
		We simply denote by $\tau=\tau^{\mS_Z\widehat{\otimes}E}$.
		 Then the operator
		\begin{align}
		\label{d107}
		A_P=P-P'-\tau(P-P')\tau+P'DP'+\tau P'\tau D \tau P'\tau -D
		\end{align}
		satisfies all conditions (cf. \cite[(2.11), (2.12)]{MP97b}).

		(i) Assume that $\ind(D)=0\in K_G^0(B)$. As in the proof of (ii) for the odd case, for $r>0$ fixed, we can choose an equivariant spectral section $P'$ in the odd sense such that $P'=0$ on $\mathcal{P}_{\lambda\leq r}(D)$. From Definition \ref{d199} (b), we have $\tau P'\tau=0$ on $\mathcal{P}_{\lambda\geq -r}(D)$. Let $V=\ker(P'+\tau P'\tau)$. Then $V$ is a finite dimensional equivariant complex vector bundle over $B$. We split the complex vector bundle by $V=V_+\oplus V_-$ with respect to $\tau$. Then $\ind(D)=[V_+]-[V_-]\in K_G^0(B)$. The assumption $\ind(D)=0\in K^0(B)$ implies that there exists a complex vector bundle $U$ such that $V_{+}\oplus U\simeq V_{-}\oplus U$ as complex vector bundles.
		
		We choose another equivariant spectral section $P''$ in the odd sense such that the range of $P'-P''$ is an equivariant complex vector bundle whose rank is large enough.
		Let $V'=\ker(P''+\tau P''\tau )$ and $V'=V'_+\oplus V'_-$ with respect 
		to $\tau$.
		Let $W_{\pm}$ be the complex vector bundles such that $V_{\pm}'=V_{\pm}\oplus W_{\pm}$. Then $D_+$ induces an isomorphism between $W_+$ and $W_-$.
		Since the rank of $W_{\pm}$ is large enough,
		there exist subbundles $U_{+}\subset W_{+}$ and $U_{-}\subset W_{-}$ such that $U_{+}\simeq U_{-}\simeq U_{}$ as complex vector bundles and $D_+(U_+)=U_-$. So $V'_+\simeq V'_-$ as complex vector bundles.
		Since $\ind(D)=0\in K_G^0(B)$, this isomorphism is $G$-equivariant. 
%		Let $\phi:V'_+\rightarrow V'_-$ be an equivariant unitary bundle isomorphism. 
%		Using this to write operators on $V'$ as $2\times 2$ matrices
Let
		\begin{align}\label{eq:2.04}
		P_V=\frac{1}{2}
		\left(\begin{array}{cc}
		1 & \sqrt{-1} \\
		-\sqrt{-1} & 1
		\end{array}
		\right)
		\end{align}
		on $V'=V_+'\oplus V_-'$. Then $P=P''+P_V$
		is an equivariant spectral section.
		
		The other direction follows from (iii) easily.
		
		\
		
		Case 3:
		The dimensions of all the fibers are zero.
		
		In this case, for any self-adjoint projection $P\in \End(E)$ 
		commuting with the $G$-action,
		(\ref{d004}) holds. Thus the only restriction for $P$ as an equivariant 
		spectral section is (\ref{d005}). 
		If there exists an equivariant spectral section $P$, 
		we take 
		$A_P=P-\tau P\tau -D$. Thus $D+A_P=2P-\Id$ is invertible 
		and $P$ is the projection onto the eigenspaces of the positive 
		eigenvalues of $D+A_P$. Thus $\ind(D)=0\in K_G^0(B)$.
				
		The proof of Proposition \ref{d006} is completed.
				
	\end{proof}
	
	\begin{rem}	\label{d134}
%		From Proposition \ref{d006} (i), we note that for any $\mF\in\mathrm{F}_G^*(B)$, there exists an equivariant spectral section with respect to $D(\mF+\mF^{\mathrm{op}})$. 
		In zero dimensional case, we could also
		construct an equivariant spectral section
		of $D(\mF+\mF^{\mathrm{op}})$ as in (\ref{eq:2.04}).
	\end{rem}

\begin{defn}\label{d179}
	Let $D$ be an equivariant $B$-family on $\mF$. A perturbation operator with respect to $D$ is a family of bounded pseudodifferential operators $A$ such that $D+A$ is an \textbf{invertible} equivariant $B$-family on $\mF$.
\end{defn}

Note that if there exists an equivariant spectral section of $D$, the smoothing operator associated with it is a perturbation operator.

Remark that the tamings in \cite{Bu09,BS09,BS13} are perturbation operators when the manifolds there are smooth, compact and without boundary.

\subsection{Well-defined property for the push-forward map}\label{s0209}

In this subsection, we show that the push-forward map defined in Theorem \ref{d157} is well-defined. We use the notation in Section 1.4.

\begin{lemma}\label{d089}
	If $\ind(D(\mF_X))=0\in K_G^{j}(V)$, then $\ind(D(\mF_Z))=0\in K_G^{N(j)}(B)$, $j=0,1$. 	
\end{lemma}
\begin{proof}
	We only need to prove this lemma when the dimensions of the fibers
	are nonzero.		
	Let
	\begin{align}\label{d035}
	g_T^{TZ}=\pi_X^* g^{TY}\oplus \frac{1}{T^2}g^{TX}.
	\end{align}
	We denote by $C_T(TZ)$ the Clifford algebra bundle of $TZ$ with respect to $g_T^{TZ}$.
	If $U\in TV$, let $U^{H}\in T_{\pi_X}^HW$
	be the horizontal lift of $U$, such that $\pi_{X,*}(U^{H})=U$.
	Let $\{e_i\}$, $\{f_{p}\}$ be local orthonormal frames of $(TX, g^{TX})$, $(TY, g^{TY})$.
	Then $\{f_{p,1}^H\}\cup\{Te_i\}$ is a local orthonormal frame of $(TZ, g_T^{TZ})$.
	We define a Clifford algebra isomorphism
	\begin{align}\label{e02027}
	\mG_T:C_T(TZ)\rightarrow C(TZ)
	\end{align}
	by
	\begin{align}\label{e02028}
	\begin{split}
	\mG_T(c(f_{p,1}^H))=c(f_{p,1}^H),\quad \mG_T(c_T(Te_i))=c(e_i).
	\end{split}
	\end{align}
%	Recall that $\pi_1^*\mS_Y\widehat{\otimes}\mS_X$ is defined in Section 1.1. 
	Under this isomorphism, we can consider $((\pi_X^*\mS_Y\widehat{\otimes}\mS_X)\widehat{\otimes} E, h^{\pi_X^*\mS_Y\widehat{\otimes} \mS_X}\otimes h^{E})$
	as a self-adjoint Hermitian equivariant Clifford module of $C_T(TZ)$. 
%	Let $\nabla_T^{TZ}$ be the connection associated with $(T_{\pi_Z}^HW, g_T^{TZ})$ as in (\ref{e01014}).
	So
	\begin{align}\label{d028}
	\mF_{Z,T}=(W, L_Z, E, o_Z, T_{\pi_Z}^HW, g_T^{TZ},
	h^{L_Z}, \nabla^{L_Z}, h^E, \nabla^{E})
	\end{align}
	is an equivariant geometric family over $B$ and $\mF_{Z,1}=\mF_Z$
	in (\ref{d073}).
	
	If $\ind(D(\mF_{X}))=0\in K_G^*(V)$, from Proposition \ref{d006} (i),
	there exists a perturbation operator $A_X$ such that 
	$\ker (D(\mF_{X})+A_{X})=0$. 
	We extend $A_X$ to a pseudodifferential operator acting on $\cC^{\infty}(W,(\pi_X^*\mS_Y\widehat{\otimes}\mS_X)\widehat{\otimes} E)$ the same way as the extension of $c(e_i)$ in Section 1.1, denoted
	by $1\widehat{\otimes}A_X$.
	From the proof of \cite[Lemma 5.3]{Liu17},
	there exists $T'\geq 1$, such that when $T\geq T'$, $\ker (D(\mF_{Z,T})+1\widehat{\otimes}TA_{X})=0$.
	So by the homotopy invariance of the equivariant family index, for any $T\geq 1$, we have  $\ind(D(\mF_{Z,T}))=0$.
	
	%	If $\dim X=0$, it is obvious.
	
	The proof of Lemma \ref{d089} is completed.
\end{proof}

\subsection{Equivariant higher spectral flow}

In \cite{DZ98}, Dai and Zhang introduced the higher spectral flow
for odd dimensional fibers. In this subsection, we extend the
Dai-Zhang higher spectral flow to the equivariant case and 
define the equivariant higher spectral flow for even dimensional 
fibers inspired by \cite[Proposition 4]{MP97b}.
%In this subsection, we introduce the equivariant version
%of higher spectral flow when the dimension of the fibers
%are arbitrary. 

	Note that a horizontal subbundle on $W$ is simply a splitting of the exact sequence
\begin{align}\label{e01145}
0\rightarrow TZ\rightarrow TW \rightarrow \pi^*TB \rightarrow 0.
\end{align}
As the space of the splitting map is affine, 
since the $G$-action preserves (\ref{e01010}),
it follows that any pair of equivariant horizontal subbundles
can be connected by a smooth path of equivariant horizontal distributions. 

Assume that $\mF,\mF'\in \mathrm{F}_G^*(B)$ have the same topological structure, i.e., they satisfy the first three conditions 
in Definition \ref{d125}. Let $r\in I$, $I=[0,1]$, parametrize 
a smooth path of equivariant horizontal subbundles
$\{T_{\pi,r}^HW\}_{r\in [0,1]}$ such that $T_{\pi,0}^HW=T_{\pi}^HW$ and $T_{\pi,1}^HW=T_{\pi}^{'H}W$.
Let $g_r^{TZ}$, $h_r^{L_Z}$ and $h_r^{E}$
be the $G$-invariant metrics on $TZ$, $L_Z$ and $E$, depending smoothly on $r\in I$, which coincide with $g^{TZ}$, $h^{L_Z}$ and $h^{E}$
at $r=0$ and with $g^{'TZ}$, $h^{'L_Z}$ and $h^{'E}$ at $r=1$. By the same reason, we can choose $G$-invariant Hermitian connection $\nabla_r^{L_Z}$ and $\nabla_r^{E}$
on $L_Z$ and $E$,
such that $\nabla_0^{E}=\nabla^{E}$,
$\nabla_1^{E}=\nabla^{'E}$, $\nabla_0^{L_Z}=\nabla^{L_Z}$,
$\nabla_1^{L_Z}=\nabla^{'L_Z}$. 

Let $\widetilde{B}=B\times I$.
% and $\mathrm{pr}:\widetilde{B}\rightarrow B$ be the projection.
We consider the bundle $\widetilde{\pi}:\widetilde{W}:= W\times I\rightarrow\widetilde{B}$ together with the natural projection
$\mathrm{Pr}:\widetilde{W}\rightarrow W$.
Then the fiberwise $G$-action can be naturally extended to $\widetilde{\pi}:\widetilde{W}\rightarrow\widetilde{B}$
such that $G$ acts as identity on $I$.
Thus $T_{\widetilde{\pi}}^H\widetilde{W}_{(r, \cdot)}=\R\times T_{\pi,r}^HW$ defines an equivariant horizontal subbundle of $T\widetilde{W}$,
and $T\widetilde{Z}:=\mathrm{Pr}^*TZ$, $\widetilde{L_Z}:=\mathrm{Pr}^*{L_Z}$ and $\widetilde{E}:=\mathrm{Pr}^*{E}$ are naturally equipped with $G$-invariant
metrics $g^{T\widetilde{Z}}$, $h^{\widetilde{L_Z}}$, $h^{\widetilde{E}}$ and $G$-invariant Hermitian connections $\nabla^{\widetilde{L_Z}}$, $\nabla^{\widetilde{E}}$.
Let $\tilde{o}=\mathrm{Pr}^*o$.
%	 and $g^{T\widetilde{Z}}$, $h^{\widetilde{L_Z}}$, $h^{\widetilde{E}}$, $\nabla^{\widetilde{L_Z}}$,
%	$\nabla^{\widetilde{E}}$
%	are $G$-invariant. 
Then we obtain equivariant geometric families
\begin{align}\label{eq:2.10}
\mF_r=(W, L_Z, E, o, T_{\pi,r}^{H}W, g_r^{TZ}, h_r^{L_Z}, \nabla_r^{L_Z}, h_r^{E}, \nabla_r^{E})
\end{align}
over $B$ and
\begin{align}\label{d212}
\widetilde{\mF}=(\widetilde{W}, \widetilde{L_Z}, \widetilde{E}, \tilde{o}, T_{\widetilde{\pi}}^{H}\widetilde{W}, g^{T\widetilde{Z}}, h^{\widetilde{L_Z}}, \nabla^{\widetilde{L_Z}}, h^{\widetilde{E}}, \nabla^{\widetilde{E}})
\end{align}
over $\wi{B}$ such that $\mF_0=\mF$ and $\mF_1=\mF'$.

		If $\mF\in\mathrm{F}_G^1(B)$ and $R$, $P$ are two equivariant spectral sections of an equivariant $B$-family $D$ such that $PR=R$, then 
	the cokernel of  $P_bR_b: \Im(R_b)\rightarrow \Im(P_b)$ for $b\in B$
	forms an equivariant complex vector bundle over $B$, denoted by $[P-R]$. Hence for any two equivariant spectral
	sections $P$, $Q$, the difference element $[P-Q]$ can be defined as an element
	in $K_G^0(B)$ as follows:
	\begin{align}\label{d009}
	[P-Q]:=[P-R]-[Q-R]\in K_G^0(B),
	\end{align}
	where $R$ is an equivariant spectral section which majors $P$, $Q$ as in Proposition \ref{d006} (ii) . 	
	From (\ref{d009}), we can obtain that if $P_1$, $P_2$, $P_3$ are equivariant spectral sections of $D$, then
	\begin{align}\label{d011}
	[P_3-P_1]=[P_3-P_2]+[P_2-P_1]\in K_G^0(B).
	\end{align}
	Thus the class in (\ref{d009}) is independent of the choice
	of $R$.
%	Recall that in Definition \ref{d125}, $\mF\simeq\mF'$ if they satisfy five conditions.
%	If only the first three conditions hold,
%	we say that
%	$\mF$ and $\mF'$  have the same topological structure.

Let $\mF,\mF'\in \mathrm{F}_G^1(B)$ which have the same topological structure. Let $\mF_r$ and $\wi{\mF}$
are equivariant geometric families in (\ref{eq:2.10}) and (\ref{d212}).
	Now we consider a continuous family of operators $D_r$ on $\mF_r$ for $r\in I$ such that $D_r$ is an equivariant $B$-family on $\mF_r$. Assume that $\ind(D_0)=0\in K_G^1(B)$.
	Then the homotopy invariance of the equivariant family index implies that 
	the equivariant indice of $D_r$ vanish.
	Let $Q_0$ and $Q_1$ be equivariant spectral sections of $D_0$
	and $D_1$ respectively.
	If we consider the
	total family $\widetilde{D} =\{D_r\}$ parametrized by $B\times I$, then there exists a total equivariant spectral
	section $\widetilde{P}$. Let $P_r$ be the restriction of $\widetilde{P}$ over $B\times\{r\}$. Thus we have the natural equivariant extension of
	the higher spectral flow in \cite[Definition 1.5]{DZ98}.
	
	\begin{defn}\label{d152}
%		Let $\mF,\mF'\in \mathrm{F}_G^1(B)$ which have the same topological structure. 
%%		Let $D_r$, a continuous family of operators for $r\in I$,  be equivariant $B$-families on $\mF_r$.
%		If $\ind(D(\mF))=0\in K_G^1(B)$, 
%		For a continuous family of operators $D_r$ for $r\in [0,1]$, which are equivariant $B$-families on $\mF_r$,
		The equivariant Dai-Zhang higher spectral flow $\mathrm{sf}_G\{(D_0, Q_0),$ $ (D_1, Q_1)\}$ between the pairs
		$(D_0, Q_0)$, $(D_1, Q_1)$ is an element in $K_G^0(B)$ defined by
		\begin{align}\label{d103}
		\mathrm{sf}_G\{(D_0, Q_0), (D_1, Q_1)\}=[Q_1-P_1]-[Q_0-P_0]\in K_G^0(B).
		\end{align}
	\end{defn}
	
	From (\ref{d011}), we know that this definition is independent of
	the choice of the total equivariant spectral section
	$\widetilde{P}$. 
	
	\
		
	In the following, we define the equivariant higher spectral flow for the even case.
		
		Let $\mF\in \mathrm{F}_G^0(B)$.
		Let $D$ be an equivariant $B$-family on $\mF$. \textbf{We assume that there exists an equivariant spectral section $P$ with respect to $D$.}
		Let $A_P$ be the family of self-adjoint equivariant smoothing operators associated with $P$ by Proposition \ref{d006} (iii).
						
		Now we use the notation in Example \ref{d129} d). 
		Let $p_1^*\mF\times_{B\times S^1} p_2^*\mF^L$ be the odd equivariant geometric family  in Example \ref{d129} d) with fibers $Z\times S^1$. 		
		Let $\tau$ be the $\Z_2$-grading of the $\mS_Z\widehat{\otimes}E$ in $\mF$.
%		Let $D_L=\tau\otimes D(\mF^L)$ on $L$ over $p_1^*W\times_{B\times S^1}p_2^*(S^1\times S^1)$ (compare with the notation of Clifford algebra in Section 1.1).
%		
We consider the vector bundle part in $p_1^*\mF\times_{B\times S^1} p_2^*\mF^L$
as an ungraded one.
		Then from Definition \ref{d199}, 
		\begin{align}\label{eq:2.11}
		D_P=(D+A_P)\otimes 1+\tau\otimes D(\mF^L)
		\end{align}
		is an equivariant $B\times S^1$-family on the odd geometric family $p_1^*\mF\times_{B\times S^1} p_2^*\mF^L $ and commutes with the group action.
		
%		(\textbf{There is a problem!! $D_P$ does not preserve the $\Z_2$-grading
%		of $E\widehat{\otimes}L$. We need to change definition there or revise here!})
				
		Since $D$ and $A_P$ anti-commute with $\tau$,  
		\begin{align}\label{eq:2.12}
		D_P^2=((D+A_P)\otimes 1+ \tau\otimes D(\mF^L))^2=(D+A_P)^2\otimes 1+1\otimes D(\mF^L)^2>0.
		\end{align}
		It implies that $D_P$ is invertible. 
		Thus the APS projection $P'$ is an equivariant spectral section of $D_P$. 			 
		 Similarly, let $Q$ be another equivariant spectral section of $D$, we can construct the equivariant spectral section $Q'$ of $D_Q$ as above.
		Since $p_1^*\mF\times_{B\times S^1} p_2^*\mF^L\in \mathrm{F}_G^1(B)$, from Definition \ref{d152}, we could define $\mathrm{sf}_G\{(D_P,P'), (D_Q, Q')\}\in K_G^0(B\times S^1)$.
		
		Now we  consider Example \ref{d129} c) more explicitly. It is easy to calculate that for $\theta\in[0,1)$ fixed, the eigenvalues of $D(\mF^L)$ are $\lambda_k(\theta)=2\pi k+2\pi(\theta-1/2),\ k\in \Z$.
		So for $\theta\in [0,1)$, $\theta\neq 1/2$, we have $D(\mF^L)^2>0$. 
		Thus as in (\ref{eq:2.12}), for any $s\in [0,1]$, $\theta\neq 1/2$, restricted on $B\times \{\theta\}$,  $(1-s)D_P+sD_Q$ is invertible. From Definition \ref{d152}, it means that for $\theta\neq 1/2$, $\mathrm{sf}_G\{(D_P,P'), (D_Q, Q')\}|_{B\times \{\theta\}}=0\in K_G^0(B\times \{\theta\})$. From (\ref{d124}), there exists
		an element in $K_G^1(B)$, which we denote by $[Q-P]$, such that 
		\begin{align}\label{d138}
		j([Q-P])=\mathrm{sf}_G\{(D_P,P'), (D_Q, Q')\}\in K_G^0( B\times S^1).
		\end{align}
		
		The idea for this construction comes from \cite[Proposition 4]{MP97b}.
		We note that when the group $G$ is trivial, this definition is equivalent to that there.
	
	Similarly, if $P_1$, $P_2$, $P_3$ are equivariant spectral sections of $D$, then
	\begin{align}\label{d012}
	[P_3-P_1]=[P_3-P_2]+[P_2-P_1]\in K_G^1( B).
	\end{align}
		
	Now we extend the difference $[Q-P]$ to the equivariant higher spectral flow.
	Let $\mF,\mF'\in \mathrm{F}_G^0(B)$, which have the same topological structure, and $D_0$, $D_1$ be two equivariant $B$-families on $\mF$, $\mF'$ respectively. For $i=0,1$,
	let $Q_i$ be an equivariant spectral section of $D_i$
	with corresponding smoothing operators $A_{Q_i}$. Let $D(r)$, $r\in [0,1]$ be a continuous curve of equivariant $B$-families on $\mF_r$ such that $D(i)=D_i+A_{Q_i}$, $i=0,1$.
	Let 
	\begin{align}\label{eq:2.15}
	D_{i,Q_i}=(D_i+A_{Q_i})\otimes 1+\tau\otimes D(\mF^L).
	\end{align}
	By (\ref{eq:2.12}), they are invertible. Let $Q_i'$ be their
	APS projections. 	
	 Let $\widetilde{D} =\{D(r)\otimes 1+\tau\otimes D(\mF^L)\}$ parametrized by $B\times S^1\times I$. 
	 By (\ref{eq:2.12}), $D_{0,Q_0}$ is invertible. Thus
	  $\ind(D_{0,Q_0})=0\in K_G^1(B\times S^1)$. So $\ind(\widetilde{D})=0\in K_G^1(B\times S^1\times I)$.
	Let $\widetilde{P} =\{P(r)\}_{r\in [0,1]}$ be an equivariant spectral
	section with respect to $\widetilde{D}$ such that $P(i)$ majors $Q_i'$
	for $i=0,1$.
	Then from Definition \ref{d152}, 
	\begin{align}\label{eq:2.16}
	\mathrm{sf}_G\{(D_{0,Q_0}, Q_0'), (D_{1,Q_1}, Q_1')\}=[Q_1'-P(1)]-[Q_0'-P(0)]\in K_G^0(B\times S^1).
	\end{align}
	
	Furthermore, we could obtain that this equivariant higher
	spectral flow lies in the image of $j$ in (\ref{d124}).
	In fact, when restricted on $B\times \{\theta\}\times I$ for $\theta\neq 1/2$, as in (\ref{eq:2.12}),  $\widetilde{D}|_{B\times \{\theta\}\times I}$ is invertible. For $\theta\neq 1/2$, let $\{P'(r)_{\theta}\}_{r\in [0,1]}$ be the APS projection of $\widetilde{D}|_{B\times \{\theta\}\times I}$. Then $P'(0)_{\theta}=Q_0'|_{B\times \{\theta\}}$ and $P'(1)_{\theta}=Q_1'|_{B\times \{\theta\}}$. Since 
	$P'(r)_{\theta}$ and $P(r)|_{B\times \{\theta\}\times I}$ are two equivariant
	spectral sections of $\widetilde{D}|_{B\times \{\theta\}\times I}$ and $P(i)|_{B\times \{\theta\}}$ majors $Q_i'|_{B\times \{\theta\}}=P'(i)_{\theta}$ for $i=0,1$, we see that
	$[P'(r)_{\theta}-P(r)|_{B\times \{\theta\}\times I}]$ forms an equivariant complex vector bundle over $B\times \{\theta\}\times I$. Thus
	we have $([Q_1'-P(1)]-[Q_0'-P(0)])|_{B\times \{\theta\}}=0\in K_G^0(B\times \{\theta\})$. It implies that $\mathrm{sf}_G\{(D_{0,Q_0}, Q_0'), (D_{1,Q_1}, Q_1')\}\in \Im (j)$.
%	\begin{align}
%	\mathrm{sf}_G\{(D_0\otimes 1+D^L, Q_0'), (D_1\otimes 1+D^L, Q_1')\}=[Q_0'-P(0)]-[Q_1'-P(1)]\in K_G^1(B).
%	\end{align}
	
	\begin{defn}\label{d15}
	If	$\mF,\mF'\in \mathrm{F}_G^0(B)$,
the equivariant higher spectral flow $\mathrm{sf}_G\{(D_0, Q_0), (D_1, Q_1)\}$ between the pairs
		$(D_0, Q_0)$, $(D_1, Q_1)$ is an element in $K_G^1(B)$ defined by
		\begin{align}\label{d174}
		j\big(\mathrm{sf}_G\{(D_0, Q_0), (D_1, Q_1)\}\big)=\mathrm{sf}_G\{(D_{0,Q_0}, Q_0'), (D_{1,Q_1}, Q_1')\}.
		\end{align}
		%Here $*=0$ or $1$ when $\mF$ is odd or even.
	\end{defn}
	
	Note that when $\mF=\mF'$, $D_0=D_1=D$, the equivariant higher spectral flow $\mathrm{sf}_G\{(D, Q_0), (D, Q_1)\} =[Q_1-Q_0]$.
%	From (\ref{d011}), we know that this definition is independent of the total equivariant spectral section
%	$\widetilde{P}$.
	
	The following proposition says that any element of equivariant K-group
	could be generated by equivariant higher spectral flows. Our proof is constructive.
	
	\begin{prop}\label{d123}
		(i) For any $x\in K_G^0(B)$, there exist $\mF_1, \mF_2\in 
		\mathrm{F}_G^1(B)$ and equivariant spectral sections $P_i$, $Q_i$ with respect to $D(\mF_i)$ for $i=1,2$, such that  $x=[P_1-Q_1]-[P_2-Q_2]$.
		
(ii) For any $x\in K_G^1(B)$,
		there exist $\mF\in \mathrm{F}_G^0(B)$ and equivariant spectral sections $P$, $Q$ with respect to $D(\mF)$, such that  $x=[P-Q]$.
	\end{prop}
	\begin{proof}
%		Any element of $K_G^0(B)$ could be represented as an equivariant virtual bundle $E_+-E_-$. 
		Let $(E,h^E)$ be a Hermitian vector bundle and $\nabla^{E}$
		be a Hermitian connection on $(E, h^{E})$.
		Let $\pi:B\times S^1\rightarrow B$ be the projection onto the first part. Let $\mF=(B\times S^1, \pi^*E, o, T^H(B\times S^1), g^{TS^1}, \pi^*h^{E}, \pi^*\nabla^{E})\in \mathrm{F}_G^1(B)$, where $o$, $g^{TS^1}$ are the canonical orientation and metric on $S^1$ and $T^H(B\times S^1)=TB\times S^1$. Let $\partial_t$ be the generator of $TS^1$. Then $D(\mF)=-\sqrt{-1}\partial_t\otimes \Id_{E}$. We could calculate that the eigenvalues of $D(\mF)$ are $\lambda_k=k$ for $k\in\Z$. We denote by $P_{\lambda\geq k}$ the orthogonal projection onto the union of the eigenspaces of $\lambda\geq k$. Then for any $k$, $P_{\lambda\geq k}$ is an equivariant spectral section of $D(\mF)$. In particular, we have $[P_{\lambda\geq k}-P_{\lambda\geq k+1}]=[E]\in K_G^0(B)$. 
		Thus we obtain Proposition \ref{d123} in the even case.
%		Note that here we use the orientation
%		reversing trick at the beginning of the proof of Proposition
%		\ref{d006}.
		
		For any $x\in K_G^1(B)$, from Lemma \ref{lemma:1.04}, there exists a finite dimensional complex unitary representation $V$ of $G$, such that $x$ can be represented as a $G$-invariant unitary element  $F\in \cC^{\infty}(B,\End(V))$. Let $\mF_1=(B, E_+=E_-=B\times V)\in \mathrm{F}_G^0(B)$, with fiber $Z=\mathrm{pt}$ and trivial metric and connection on $E_{\pm}$. Let  
		\begin{align*}	
			A_0=
			\left(\begin{array}{cc}
			0 & I \\
			I & 0
			\end{array}
			\right),\quad
			A_1=
			\left(\begin{array}{cc}
			0 & F(b)^* \\
			F(b) & 0
			\end{array}
			\right)
		\end{align*}
		be endomorphisms of $V\oplus V$.
		Let $P_i$ be the orthogonal projection onto the positive part of the spectrum of $A_i$ for $i=0,1$.
		It is easy to calculate that 
%		for $(v,w)\in V\oplus V$, 
%		$P_0(v,w)=((v+F(b)^*w)/2, (F(b)v+w)/2)$. 
%		So 
		for $i=0,1$, $P_i\tau+\tau P_i=\tau$.
		From Definition \ref{d003}, we know that $P_0$ and $P_1$ are equivariant spectral sections with respect to $D(\mF_1)=0$ on $\mF_1$. Let $D_i=A_i\otimes 1+\tau\otimes D(\mF^L)$ on $p_1^*\mF_1\times_{B\times S^1} p_2^*\mF^L$ and $P_i'$ be the APS projections of $D_i$. Let $D_s=(1-s)D_0+sD_1$ for $s\in [0,1]$. We claim that
		\begin{align}\label{d175}
		\mathrm{sf}_G\{(D_0, P_0'), (D_1, P_1')\}=[W]-[U]\in K_G^0(B\times S^1),
		\end{align}
		where $W$ and $U$ are bundles constructed above Lemma \ref{lemma:1.04}.
		Then from (\ref{d138}) and (\ref{d175}), we obtain Proposition \ref{d123} in the odd case.
		
		We prove the claim (\ref{d175}) constructively.
		Let $\lambda_{b,i}$ be the eigenvalues of $F(b)$ on $V$ with
		unitary eigenvectors $v_{b,i}$. Then $\ov{\lambda}_{b,i}$ are the eigenvalues of $F^*(b)$ on $V$ with the same eigenvectors. Let $v_{b,i}^{\pm}$ be the corresponding vectors in $E_{\pm,b}$. Let $v_k$ be the eigenvector of $\lambda_k(\theta)$ with respect to $D(\mF^L)$ (see Appendix B).
		From (\ref{eq:2.12}),
		it is easy to calculate that the nonnegative eigenvalues of $D_s$ are
		\begin{align}\label{eq:2.19}
		\lambda_{s,b,i,k}(\theta)=\sqrt{\lambda_k(\theta)^2+(1-2s)^2
			+s(1-s)(\lambda_{b,i}+\ov{\lambda}_{b,i}+2)}.
		\end{align}
		Since $F$ is unitary, $|\lambda_{b,i}|=1$. So 
		$\lambda_{s,b,i,k}(\theta)=0$ if and only if $k=0, \theta=\frac{1}{2},
		s=\frac{1}{2}$ and $\lambda_{b,i}=-1$.
%		Let $\lambda_{s,b,i,k}(\theta)=\sqrt{\lambda_k(\theta)^2+s^2+(1-s)^2+s(1-s)(\lambda_{b,i}+\ov{\lambda}_{b,i})}$.  $\{\lambda_{s,b,i,k}\}$ are the set of nonnegative eigenvalues of $D_s$.
		From (\ref{eq:2.11}), we calculate that the
		eigenfunctions of $\lambda_{s,b,i,1}(\theta)$ with respect to $D_s$
		are
		\begin{align}\label{eq:2.20}
		\begin{split}
		&u_{s,b,i}^{(1)}(\theta)=((s\ov{\lambda}_{b,i}+1-s)v_{b,i}^++
		(\lambda_{s,b,i,1}(\theta)-\lambda_1(\theta))
		v_{b,i}^-)\otimes v_1,
		\\
		& u_{s,b,i}^{(2)}(\theta)=((\lambda_{s,b,i,-1}(\theta)+\lambda_{-1}(\theta))
		v_{b,i}^++(s\lambda_{b,i}+1-s)v_{b,i}^-)\otimes v_{-1}.
		\end{split}
		\end{align}
Let	
\begin{align}\label{eq:2.21}
\begin{split}
&u_{s,b,i}^{(3)}(\theta)=((s\ov{\lambda}_{b,i}+1-s)v_{b,i}^++
(\lambda_{s,b,i,0}(\theta)-\lambda_0(\theta))
v_{b,i}^-)\otimes v_0,\quad \quad  0\leq \theta\leq 1/2,
\\
& u_{s,b,i}^{(4)}(\theta)=((\lambda_{s,b,i,0}(\theta)+\lambda_{0}(\theta))
v_{b,i}^++(s\lambda_{b,i}+1-s)v_{b,i}^-)\otimes v_{0}, \quad\ 
1/2\leq \theta\leq 1.
\end{split}
\end{align}	
Then  $u_{s,b,i}^{(3)}(\theta)$ and $u_{s,b,i}^{(4)}(\theta)$ are the
eigenfunctions of $\lambda_{s,b,i,0}(\theta)$ with respect to $D_s$.
Let 
\begin{align}\label{eq:2.23}
w_{s,b,i}^{(j)}(\theta)=u_{s,b,i}^{(j)}(\theta)/\|u_{s,b,i}^{(j)}(\theta)
\|,\quad j=1,2,3,4.
\end{align}
Remark that when $\theta=1/2$, if $s=1/2$ and $\lambda_{b,i}=-1$, then 
$u_{s,b,i}^{(3)}(1/2)=u_{s,b,i}^{(4)}(1/2)=0$. In this case, we define
\begin{align}
w_{s,b,i}^{(3)}(1/2)=\lim_{\theta\rightarrow 1/2^-}
u_{s,b,i}^{(3)}(\theta)/\|u_{s,b,i}^{(3)}(\theta)\|,\quad
w_{s,b,i}^{(4)}(1/2)=\lim_{\theta\rightarrow 1/2^+}
u_{s,b,i}^{(4)}(\theta)/\|u_{s,b,i}^{(4)}(\theta)\|.
\end{align}

		Choose $\chi(\theta)\in \cC^{\infty}([0,1/2])$ with $\chi(\theta)=1/2$ near $\theta=0$ and $\chi(\theta)=0$ near $\theta=1/2$. Let
		\begin{align}\label{eq:2.25}
		\begin{split}
		&w_{s,b,i}^{(5)}(\theta)=\chi(\theta)w_{s,b,i}^{(1)}(\theta)
		+(1-\chi(\theta))w_{s,b,i}^{(3)}(\theta),
		\quad\quad\quad\quad 0\leq \theta\leq 1/2,
		\\
		&w_{s,b,i}^{(6)}(\theta)=\chi(1-\theta)w_{s,b,i}^{(2)}(\theta)
		+(1-\chi(1-\theta))w_{s,b,i}^{(4)}(\theta),
		\quad 1/2\leq \theta\leq 1.
		\end{split}
		\end{align}
Since
$v_1(\theta=0)=v_0(\theta=1)$, $v_0(\theta=0)=v_{-1}(\theta=1)$,
$\lambda_0(0)=\lambda_{-1}(1)=-\pi$ and  $\lambda_0(1)=\lambda_{1}(0)=\pi$,
from (\ref{eq:2.20})-(\ref{eq:2.23}), we have
$w_{s,b,i}^{(1)}(0)=w_{s,b,i}^{(3)}(1)$ and $w_{s,b,i}^{(2)}(1)=w_{s,b,i}^{(4)}(0)$.
By (\ref{eq:2.25}), we have
\begin{align}\label{eq:2.26}
w_{s,b,i}^{(5)}(0)=w_{s,b,i}^{(6)}(1).
\end{align}	
So $\bigoplus_i\C\{w_{s,b,i}^{(5)}(\theta), 0\leq \theta< 1/2\}$ and 
$\bigoplus_i\C\{w_{s,b,i}^{(6)}(\theta), 1/2< \theta\leq 1\}$ 
can be connected as a trivial equivariant complex vector bundle over $B\times (S^1_{\theta}\backslash\{1/2\})\times [0,1]_s$. Then
we could glue $w_{s,b,i}^{(5)}(1/2)$ and $w_{s,b,i}^{(6)}(1/2)$ for any $i$ 
to get an equivariant complex vector bundle $\widetilde{W}$ over $B\times S^1_{\theta}\times [0,1]_s$. Let $\widetilde{R}$ be the orthogonal projection onto the sum of $\widetilde{W}$ and the eigenspaces with non-positive eigenvalues of $\widetilde{D}=\{D_s\}$. Then $\widetilde{Q}=1-\widetilde{R}$ is an equivariant spectral section with respect to $\widetilde{D}$. 
Since $\Ker D_s\neq \emptyset$ only when $s=1/2$, 
from (\ref{eq:2.16}), we have 
\begin{align}\label{eq:2.27}
\mathrm{sf}_G\{(D_{0}, P_0'), (D_{1}, P_1')\}=[P_1'-\widetilde{Q}|_{s=1}]
-[P_0'-\widetilde{Q}|_{s=0}]=[\widetilde{W}|_{s=1}]-[\widetilde{W}|_{s=0}].
\end{align}

For $s=1$, from (\ref{eq:2.21}), (\ref{eq:2.23}) and (\ref{eq:2.25}), we have $w_{0,b,i}^{(5)}(1/2)=w_{0,b,i}^{(3)}(1/2)=(\bar{\lambda}_{b,i}v_{b,i}^+
+v_{b,i}^-)/\sqrt{2}$ and $w_{0,b,i}^{(6)}(1/2)=w_{0,b,i}^{(4)}(1/2)=(
v_{b,i}^++\lambda_{b,i}v_{b,i}^-)/\sqrt{2}$. So $w_{0,b,i}^{(6)}(1/2)
=\lambda_{b,i}\cdot w_{0,b,i}^{(5)}(1/2)$. From the construction
before Lemma \ref{lemma:1.04}, we have
$[\widetilde{W}|_{s=1}]=[W]$.

For $s=0$, in the same way, we calculate that $w_{1,b,i}^{(6)}(1/2)
=w_{1,b,i}^{(5)}(1/2)=(v_{b,i}^+
+v_{b,i}^-)/\sqrt{2}$. So $[\widetilde{W}|_{s=0}]=[U]$.

Therefore, we obtain the claim (\ref{d175}) from (\ref{eq:2.27}).
			
		The proof of Proposition \ref{d123} is completed.
	\end{proof}
		
		Note that the proof of Proposition \ref{d123} in odd case gives a nontrivial example of the equivariant higher spectral flow for even dimensional fibers and an example of the equivariant spectral section without the spectral gap.
	
	\begin{rem}
		In non-equivariant case, there is a stronger version of Proposition \ref{d123} in \cite[Proposition 12]{MP97b}.
%		: for any %$x\in K^0(B)$ (resp. $K^1(B)$),
%		$\mF\in \mathrm{F}^*(B)$ and spectral section $P$ of $D(\mF)$, 
%		as $Q$ ranges over spectral sections, $[P-Q]$ 
%		exhausts $K^*(B)$.
%		
%		there exists spectral section $Q$ of $D(\mF)$, such that  $[P-Q]=x$. 
	\end{rem}

	\subsection{Equivariant local family index theorem}\label{s0202}
	
	In this subsection, we use the notation in Section 1.2 to
	describe the equivariant local index theorem for $\mF\in \mathrm{F}_G^*(B)$ when \textbf{the $G$-action on $B$ is trivial}.
	
	For $b\in B$, let $\cE_{b}$ be the set of smooth sections over $Z_b$ of $\mS_Z\widehat{\otimes} E|_{Z_b}$. As in \cite{Bi86},
	we will regard $\cE$ as
	an infinite dimensional vector bundle over $B$.
	
	Let $\nabla^{TB}$ be the Levi-Civita connection on 
	$(B, g^{TB})$.
	Let $\,^0\nabla^{TW}$ be the connection on $TW=T_{}^HW\oplus TZ$ defined by
	\begin{align}\label{e01015}
	\,^0\nabla^{TW}=\pi^*\nabla^{TB}\oplus\nabla^{TZ}.
	\end{align}
	Then $\,^0\nabla^{TW}$ preserves the metric $g^{TW}$ in (\ref{e01013}).
	Set
	\begin{align}\label{e01017}
	S=\nabla^{TW}-\,^0\nabla^{TW}.
	\end{align}
		
	If $V\in TB$, let $V^H\in T_{\pi}^HW$ be its horizontal lift in $T_{\pi}^HW$ so that $\pi_*V^H=V$.
	For any $V\in TB$, $s\in \cC^{\infty}(B,\cE)=\cC^{\infty}(W,\mS_Z\widehat{\otimes} E)$,
	by \cite[Proposition 1.4]{BF86I}, the connection
	\begin{align}\label{e01028}
	\nabla_V^{\cE}s:=\nabla_{V^H}^{\mS_Z\widehat{\otimes} E}s-\frac{1}{2}\la S(e_i)e_i, V^H \ra \,s
	\end{align}
	preserves the $L^2$-product on $\cE$.
	
	Let $\{f_p\}$ be a local orthonormal frame of $TB$ and $\{f^p\}$ be its dual.
	We denote by $\nabla^{\cE}=f^p\wedge \nabla^{\cE}_{f_p}$.
	Let $T$ be the torsion of $\,^0\nabla^{TW}$. Then $T(f_p^H, f_q^H)\in TZ$.
	We denote by
	\begin{align}\label{e01030}
	c(T)=\frac{1}{2}\,c\left(T(f_p^H, f_q^H)\right)f^p\wedge f^q\wedge.
	\end{align}
	
	By \cite[(3.18)]{Bi86}, the (rescaled) Bismut superconnection
	\begin{align}\label{e01031}
	\mathbb{B}_u:\cC^{\infty}(B,\Lambda(T^*B)\widehat{\otimes}\cE)\rightarrow\cC^{\infty}(B, \Lambda(T^*B)\widehat{\otimes}\cE)
	\end{align}
	is defined by
	\begin{align}\label{e01032}
	\mathbb{B}_u=\sqrt{u}D(\mF)+\nabla^{\cE}-\frac{1}{4\sqrt{u}}c(T).
	\end{align}
	Obviously, the Bismut superconnection $\mathbb{B}_u$ commutes with the $G$-action.
	Moreover, $\mathbb{B}_u^2$ is a $2$-order elliptic differential operator along the fibers $Z$ (cf. \cite[(3.4)]{Bi86}).
	Let $\exp(-\mathbb{B}_u^2)$ be the family of heat operators associated with the fiberwise elliptic operator $\mathbb{B}_u^2$.
	From \cite[Theorem 9.50]{BGV04}, $\exp(-\mathbb{B}_u^2)$ is a smooth family of smoothing operators.
		
If $P$ is a trace class operator acting on $\Lambda(T^*B)\widehat{\otimes}\End(\cE)$ which takes values in $\Lambda(T^*B)$,
	we use the convention that if $\omega\in \Lambda(T^*B)$,
	\begin{align}\label{e01056}
	\tr_s[\omega P]=\omega\tr_s[P].
	\end{align}
	 We denote by $\tr^{\mathrm{odd/even}}_s[P]$ the part of $\tr_s[P]$
	 which takes values in odd or even forms. Set
	\begin{align}\label{i16}
	\widetilde{\tr}[P]=
	\left\{
	\begin{array}{ll}
	\tr_s[P], & \hbox{if $\dim Z$ is even;} \\
	\tr_s^{\mathrm{odd}}[P], & \hbox{if $\dim Z$ is odd.}
	\end{array}
	\right.
	\end{align}
	
	Recall that in this subsection
	we assume that $G$ acts trivially on $B$.
	Take $g\in G$.
	Let $W^g$ be the fixed point set of $g$ on $W$.
	Then $W^g$ is a submanifold of $W$ and $\pi:W^g\rightarrow B$ is a fiber bundle with compact fibers $Z^g$.
	Set
	\begin{align}\label{e01132}
	\ch_g(E, \nabla^E)=
	\tr_s\left[g\exp\left(\frac{\sqrt{-1}}{2\pi}R^E|_{W^g}\right)\right].
	\end{align}
	Let $\ch_g(E)\in H^{even}(W^g,\C)$ denote the cohomology class of $\ch_g(E, \nabla^E)$.
	When the fiber $Z$ is a point, it descends to the equivariant Chern character map
	\begin{align}\label{d500}
	\ch_g:K_G^0(B)\longrightarrow H^{even}(B,\C).
	\end{align}
	By (\ref{d124}), for $x\in K_G^1(B)$,  $j(x)\in K_G^0(B\times S^1)$. The odd equivariant Chern character map
	\begin{align}\label{d130}
	\ch_g:K_G^1(B)\longrightarrow H^{odd}(B,\C)
	\end{align}
	is defined by
	\begin{align}\label{d131}
	\ch_g(x):=\int_{S^1}\ch_g(j(x)).
	\end{align}
	We adopt the sign notation in the integral as in (\ref{e01136}). 
	This is just the equivariant version of the odd Chern character in 
	\cite{Getzler93} and \cite[(1.50)]{Z01} (see e.g., \cite[(3.10)]{LM18a}).
	
	Let $N$ be the normal bundle of $W^g$ in $W$. As $G$ is compact,
	there is an orthonormal decomposition of real vector bundles
	over $W^g$,
	\begin{align}\label{eq:2.40}
	TZ|_{W^g}=TZ^g\oplus N.
	\end{align}
	Let $\nabla$ be a Euclidean connection on $(TZ,g^{TZ})$ commuting with
	the $G$-action. Then its restriction on $W^g$ preserves the 
	decomposition (\ref{eq:2.40}). Let $\nabla^{TZ^g}$ and $\nabla^N$
	be the corresponding induced connections on $TZ^g$ and $N$,
	with curvatures $R^{TZ^g}$ and $R^N$ respectively. Set
	\begin{multline}\label{eq:07}
	\widehat{\mathrm{A}}_{g}(TZ,\nabla)
	:=\mathrm{det}^{1/2}\left(\frac{\frac{\sqrt{-1}}{4\pi}
		R^{TZ^g}}{\sinh \left(\frac{\sqrt{-1}}{4\pi}R^{TZ^g}\right)}\right)
	\\
\times\left(\sqrt{-1}^{\frac{1}{2}\dim
		N}\left.\mathrm{det}^{1/2}\right|_{N}\left(1-
	g \exp\left(\frac{\sqrt{-1}}{2\pi}R^{N}\right)\right)\right)^{-1}.
	\end{multline}	
	If $g$ acts on $L|_{W^g}$ by multiplying by $e^{\sqrt{-1}v}$, we write
%	\begin{align}\label{e01138}
%	\ch_g(\underline{L^{1/2}}):=\exp\left(\frac{i}{4\pi}R^{L}|_{Y^g}+\pi i v t\right)\in
%	\Omega^{*}(Y^g, \C).
%	\end{align}
%	Set
	\begin{align}\label{bl0659}
	\ch_g(L_Z^{1/2}, \nabla^{L_Z^{1/2}}):=
	\exp\left(\frac{\sqrt{-1}}{4\pi}R^{L}|_{W^g}+\frac{\sqrt{-1}}{2}v \right).
	\end{align}
	%Let $\nabla$ be a Euclidean connection on $(TZ, g^{TZ})$.
	We denote by
	\begin{align}\label{bl0660} 
	\td_g(\nabla, \nabla^{L_Z}):=\widehat{\mathrm{A}}_g(TZ,\nabla) \ch_g(L_Z^{1/2}, \nabla^{L_Z^{1/2}}).
	\end{align}
	%where $\widehat{\mathrm{A}}_g$ is the equivariant $\widehat{\mathrm{A}}$ form
	%(See \cite[(1.44)]{Liu17} for the precise definition).
	Let $\td_g(TZ, L_Z)\in H^{even}(W^g,\C)$ denote the cohomology class of $\td_g(\nabla, \nabla^{L_Z})$.
		
	For $\alpha\in \Omega^i(B)$, set
	\begin{align}\label{e01059}
	\psi_B(\alpha)=\left\{
	\begin{array}{ll}
	\left(\frac{1}{2\pi \sqrt{-1}}\right)^{\frac{i}{2}}\cdot \alpha, & \hbox{if $i$ is even;} \\
	\frac{1}{\sqrt{\pi}}\left(\frac{1}{2\pi \sqrt{-1}}\right)^{\frac{i-1}{2}}\cdot \alpha, & \hbox{if $i$ is odd.}
	\end{array}
	\right.
	\end{align}	
	We state the equivariant family local index theorem here
	(cf. e.g., \cite[Theorem 4.17]{Bi86}, \cite[Theorem 2.10]{BF86II},
	\cite[Theorem 2.2]{Liu17}, \cite[Theorem 2.2]{Liu19} and \cite[Theorem 1.3]{LM00}). Note that
	from \cite[Lemma 4.1]{LMZ00},
	$Z^g$ is naturally oriented.
	\begin{thm}\label{d069}
		For any $u>0$ and $g\in G$, the differential form $\psi_{B}\widetilde{\tr}
		[g\exp(-\mathbb{B}_u^2)]\in \Omega^{*}(B,\C)$
		is closed and its cohomology class represents $\ch_g(\ind(D(\mF)))\in H^*(B, \C)$.
		As $u\rightarrow 0$, we have
		\begin{align}\label{d501}
		\lim_{u\rightarrow 0}\psi_{B}\widetilde{\tr}
		[g\exp(-\mathbb{B}_u^2)]
		=\int_{Z^g}\td_g(\nabla^{TZ},\nabla^{L_Z})\,
		\ch_g(E, \nabla^{E}).
		\end{align}
	\end{thm}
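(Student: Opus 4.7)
The theorem is the equivariant family local index theorem, proved in detail in the author's \cite{Liuetaform}; I outline the strategy I would follow.

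\emph{Closedness and identification of the cohomology class.} A direct calculation with the Bismut superconnection gives $d\,\widetilde{\tr}[g\exp(-\mathbb{B}_u^2)] = \widetilde{\tr}[g\,[\mathbb{B}_u,\exp(-\mathbb{B}_u^2)]] = 0$, using that $\mathbb{B}_u$ commutes with $g$ and that the (appropriately parity-graded) supertrace $\widetilde{\tr}$ kills supercommutators; the normalization $\psi_B$ ensures the form is real. The standard Bismut--Cheeger transgression formula then shows $\partial_u\widetilde{\tr}[g\exp(-\mathbb{B}_u^2)]$ is exact, so the cohomology class is independent of $u$. To identify it with $\ch_g(\ind D(\mF))$, I pass to $u\to\infty$: for even fibers, after a small $G$-equivariant perturbation of $D(\mF)$ (e.g.\ using Proposition \ref{d006} to arrange that $\ker D(\mF)$ is a $G$-equivariant vector bundle), $\exp(-uD(\mF)^2)$ projects onto $\ker D(\mF)$ and $\widetilde{\tr}[g\exp(-\mathbb{B}_u^2)]$ converges to the equivariant Chern character form of the induced connection on the index bundle. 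For odd fibers I reduce to the even case by the suspension trick of Example \ref{d129}\,d)---pull back to $B\times S^1$, couple with $\mF^L$, and apply (\ref{d127}) and (\ref{d131}).

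\emph{Small-time limit via Getzler rescaling.} This is the heart of the theorem. I localize at the fixed-point submanifold $W^g\subset W$. At a point $x_0\in Z^g\subset Z_b$, split $T_{x_0}Z = T_{x_0}Z^g\oplus N_{x_0}$, where $g$ acts on $N_{x_0}$ as an orthogonal automorphism without the eigenvalue $1$. In Fermi coordinates for $Z^g\subset Z$, with $\mS_Z\widehat\otimes E$ trivialized by parallel transport along normal geodesics, I apply Getzler rescaling: dilate the normal coordinates by $\sqrt u$ and rescale the tangential Clifford variables by $1/\sqrt u$. Then $u\mathbb{B}_u^2$ converges as $u\to 0$ to a generalized harmonic oscillator on $T_{x_0}Z^g\oplus N_{x_0}$ whose coefficients are $R^{TZ}|_{x_0}$, $R^{L_Z}|_{x_0}$, $R^E|_{x_0}$ together with the rotation $g|_{N_{x_0}}$. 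Mehler's formula gives the limiting heat kernel in closed form; the Gaussian integrations in the $N_{x_0}$ directions produce exactly the equivariant Todd factor $\td_g(\nabla^{TZ},\nabla^{L_Z})$, while the tangential data give $\ch_g(E,\nabla^E)$, yielding (\ref{d501}) after fiber integration over $Z^g$.

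\emph{Main obstacle.} The delicate step is the uniform localization to $W^g$: one must verify that the heat kernel contribution away from a fixed neighborhood of $W^g$ decays like $O(e^{-c/u})$ uniformly in the base and compatibly with the form degrees introduced by the horizontal part of the superconnection, and one must justify Getzler rescaling in the presence of the normal $g$-action. These are off-diagonal Gaussian estimates of Berline--Getzler--Vergne type, adapted to the Bismut superconnection; once they are in hand, the Getzler convergence and Mehler evaluation are by now standard and produce the required characteristic forms.
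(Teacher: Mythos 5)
The paper does not prove Theorem~\ref{d069}; it is stated as a quotation of \cite[Theorem 1.2]{Liuetaform}, and the argument in that reference is the Bismut superconnection heat-kernel proof you outline (transgression for closedness and $u$-independence, large-time limit for the identification with $\ch_g(\ind D(\mF))$, and Getzler rescaling with localization at $W^g$ for the small-time limit), so your plan matches the intended proof. One small inaccuracy: the normalization $\psi_B$ does not make $\psi_B\widetilde{\tr}[g\exp(-\mathbb{B}_u^2)]$ real-valued when $g\neq e$; the paper correctly places it in $\Omega^*(B,\C)$, and reality only appears after combining the $g$ and $g^{-1}$ contributions as in~(\ref{d102}). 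Also note that $\widetilde{\tr}$ in~(\ref{i16}) already builds in the odd/even distinction, so the suspension trick from Example~\ref{d129}~d) is not strictly needed for the small-time limit, though it is a legitimate device for the cohomology-class identification in the odd case, which is how the paper organizes the rest of its analysis.
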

		
	To simplify the notations, we set
	\begin{align}\label{d015}
	\mathrm{FLI}_g(\mF)=\int_{Z^g}\td_g(\nabla^{TZ},\nabla^{L_Z})\,
	\ch_g(E, \nabla^{E})\in \Omega^{*}(B,\C).
	\end{align}
	%Then modulo exact forms,  we have
	So Theorem \ref{d069} says that for $\mF\in \mathrm{F}_G^{0/1}(B)$,
	\begin{align}\label{d062}
	[\mathrm{FLI}_g(\mF)]=\ch_g(\ind(D(\mF)))\in H^{even/odd}(B,\C).
	\end{align}
		
		When $\mF$ is the equivariant geometric family in Example \ref{d129} a),
		$Z=\mathrm{pt}$, the equivariant family local index theorem degenerates to the equivariant Chern-Weil theory:
		\begin{align}\label{d177}
		\psi_{B}\widetilde{\tr}
		[g\exp(-\mathbb{B}_u^2)]=\psi_{B}\tr_s[g\exp(-(\nabla^{E})^2)]
		=
		\ch_g(E, \nabla^{E}).
		\end{align}
		In this case, $\mathrm{FLI}_g(\mF)=
		\ch_g(E, \nabla^{E})=\ch_g(E_+, \nabla^{E_+})-\ch_g(E_-, \nabla^{E_-}).$
		
			If $\alpha\in \Lambda(T^*(\R_+\times B))$, 
		\begin{align}\label{e01003}
		\alpha=\alpha_0+ds\wedge \alpha_1,\quad \alpha_0, \alpha_1\in \Lambda (T^*B).
		\end{align}
		Set
		\begin{align}\label{e01005}
		[\alpha]^{ds}=\alpha_1.
		\end{align}
		
			Let $\mF, \mF'\in \mathrm{F}_G^*(B)$ which have the same topological structure. 	By (\ref{d062}), we have $[\mathrm{FLI}_g(\mF)]=[\mathrm{FLI}_g(\mF')]\in H^{*}(B,\C)$. 
		
		We use the notation in (\ref{eq:2.10}) and (\ref{d212}).
		By \cite[Theorem B.5.4]{MM07}, modulo exact forms on $W^g$, the
		equivariant Chern-Simons forms
		\begin{align}\label{e01127}
		\begin{split}
		&\widetilde{\td}_g(\nabla^{TZ}, \nabla^{L_Z},\nabla^{' TZ}, \nabla^{'L_Z} ):=-\int_0^1[\td_g(\nabla^{T\widetilde{Z}}, \nabla^{\widetilde{L_Z}})]^{ds}ds,
		\\
		&\widetilde{\ch}_g(\nabla^{E}, \nabla^{'E}):=-\int_0^1[\ch_g(\widetilde{E},
		\nabla^{\widetilde{E}})]^{ds}ds
		\end{split}
		\end{align}
		depend only on the connections in $\mF$ and $\mF'$.  Moreover,
		\begin{align}\label{e01110}
		\begin{split}
		&d^{W^g}\,\widetilde{\td}_g(\nabla^{TZ},\nabla^{L_Z},\nabla^{' TZ},  \nabla^{'L_Z} )=\td_g(\nabla^{' TZ}, \nabla^{'L_Z} )
		-\td_g(\nabla^{TZ}, \nabla^{L_Z} ),
		\\
		&d^{W^g}\,\widetilde{\ch}_g(\nabla^{E}, \nabla^{'E})=
		\ch_g(E,
		\nabla^{'E})-\ch_g(E,
		\nabla^{E}).
		\end{split}
		\end{align}

		Set
		\begin{multline}\label{d112} 
		\widetilde{\mathrm{FLI}}_g(\mF, \mF')=\int_{Z^g}\widetilde{\td}_g(\nabla^{TZ},\nabla^{L_Z},\nabla^{' TZ},  \nabla^{'L_Z} )
		\, \ch_g(E, \nabla^{E})
		\\
		+\int_{Z^g}\td_g( \nabla^{'TZ}, \nabla^{'L_Z} )
		\, \widetilde{\ch}_g(\nabla^{E},\nabla^{'E})\in \Omega^*(B,\C)/d\Omega^*(B,\C).
		\end{multline}
		From \cite[(1.7)]{BGV04}, for $\sigma\in \Omega^{*}(W^g)$,
		using the sign convention in (\ref{e01136}), we have
		\begin{align}\label{eq:3.23}
		d^B\int_{Z^g}\sigma=\int_{Z^g}d^{W^g}\sigma.
		\end{align}
		From (\ref{e01110}) and (\ref{eq:3.23}), we have
		\begin{align}
		d^B\,\widetilde{\mathrm{FLI}}_g(\mF, \mF')=\mathrm{FLI}_g(\mF')-\mathrm{FLI}_g(\mF).
		\end{align}

	\subsection{Equivariant eta form}\label{s0203}
	
	In this subsection, we also assume that $G$ acts trivially on $B$. 
	We define the equivariant Bismut-Cheeger eta form with perturbation operator in Definition \ref{d179}.
%	
%	\begin{defn}\label{d179}
%		Let $D$ be an equivariant $B$-family on $\mF$. A perturbation operator with respect to $D$ is a family of bounded pseudodifferential operators $A$ such that $D+A$ is an invertible equivariant $B$-family on $\mF$.
%	\end{defn}
%	
%	Note that if there exists an equivariant spectral section of $D$, the smoothing operator associated with it is a perturbation operator.
%	
%		
%	Remark that the tamings in \cite{Bu09,BS09,BS13} are perturbation operators when the manifolds are smooth, compact and without boundary there. 
	
	In this subsection, \textbf{we assume that there exists a perturbation operator with respect to $D(\mF)$ on $\mF$.} It implies that $\ind(D(\mF))=0\in K_G^*(B)$.

	Let $A$ be a perturbation operator with respect to $D(\mF)$.
	We extend $A$ to $1\widehat{\otimes}A$ on 
	$\cC^{\infty}(B,\pi^*\Lambda(T^*B)\widehat{\otimes}\cE)$
	as an element of the $\Z_2$-graded tensor product of 
	$\Z_2$-graded algebras. In this case,
	\begin{align}
	(\alpha\widehat{\otimes}1)(1\widehat{\otimes}A)=
	(-1)^{\deg \alpha}(1\widehat{\otimes}A)(\alpha\widehat{\otimes}1).
	\end{align}
	We usually abbreviate $1\widehat{\otimes}A$ by $A$ when there is 
	no confusion.
	
Let $\chi\in \cC^{\infty}(\R)$ be a cut-off function such that
\begin{align}\label{d013}
\chi(u)=
\left\{
\begin{aligned}%{ll}
&0,  &\hbox{if $u<1$;} \\
&1,  &\hbox{if $u>2$.}
\end{aligned}
\right.
\end{align}
	
	Set
	\begin{align}\label{d070}
	\mathbb{B}_u'=\mathbb{B}_u+\sqrt{u}\chi(\sqrt{u})A.
	\end{align}
	%We assume that $g$ acts trivially on $B$.
	Since $\chi(\sqrt{u})=0$ if $u\in (0,1)$, by
	(\ref{d501}) and (\ref{d015}),
	\begin{align}\label{d101}
	\lim_{u\rightarrow 0}\psi_{B}\widetilde{\tr}
	[g\exp(-(\mathbb{B}_u')^2)]=\mathrm{FLI}_g(\mF)\in \Omega^{*}(B, \C).
	\end{align}
	Since $\chi(\sqrt{u})=1$ if $u\in (2,+\infty)$,
	from \cite[Theorem 9.19]{BGV04},  we have
	\begin{align}\label{d016}
	\lim_{u\rightarrow +\infty}\psi_{B}\widetilde{\tr}\left[g\exp\left(-(\mathbb{B}_u')^{2}\right)\right]=0.
	\end{align}

	\begin{defn}\label{d017}
		%Assume that there exists an equivariant spectral section $P$ of $D(\mF)$.
		For any $g\in G$, modulo exact forms on $B$, the equivariant eta form with perturbation operator $A$ is defined by
		\begin{multline}\label{i17}
		\tilde{\eta}_g(\mF, A)
		=
		-\int_0^{\infty}\left\{\psi_{\R\times B}\left.\widetilde{\tr}\right.\left[g\exp\left(-\left(
		\mathbb{B}_{u}'+du\wedge\frac{\partial}{\partial u}\right)^2\right)\right]\right\}^{du}du
		\\
		\in \Omega^*(B,\C)/d\Omega^*(B,\C).
		\end{multline}		
	\end{defn}
	
	The regularities of the integral in the right hand side of (\ref{i17}) are proved in \cite[Section 2.4]{Liu17}.
	As in \cite[(2.81)]{Liu17}, we have
	\begin{align}\label{d019}
	d\tilde{\eta}_g(\mF, A)=\mathrm{FLI}_g (\mF).
	\end{align}
	
	As in \cite[(2.95)]{Liu17}, the value of $\widetilde{\eta}_g(\mF,A)$ in $\Omega^*(B,\C)/d\Omega^*(B,\C)$ is independent of the choice of the cut-off function.  Similarly, if $A_P$ and $A_P'$ are two smoothing operators associated with the same equivariant spectral section $P$, we have $\widetilde{\eta}_g(\mF,A_P)=\widetilde{\eta}_g(\mF,A_P')\in \Omega^*(B,\C)/d\Omega^*(B,\C)$.
	In this case, we often simply denote it by $\widetilde{\eta}_g(\mF,P)$.

	If the fiber $Z$ is connected, we could calculate the equivariant eta form explicitly:
	\begin{multline}\label{i19}
	\tilde{\eta}_g(\mF, A)=
	\left\{
	\begin{aligned}%{ll}
	& \int_0^{\infty}\left.\frac{1}{\sqrt{\pi}}\psi_{B}\tr_s^{\mathrm{even}}\right.\left[g\left.\frac{\partial \mathbb{B}_u'}{\partial u}\right.
	\exp(-(\mathbb{B}_u')^{2})\right] du \in \Omega^*(B,\C)/d\Omega^*(B,\C),
	\\
	& \quad\quad\quad\quad\quad\quad\quad\quad\quad\quad\quad\quad\quad\quad\quad\quad\quad\quad\quad
	\quad\quad\quad\hbox{if $\mF$ is odd;} \\
	&\int_0^{\infty} \left.\frac{1}{2\sqrt{\pi}\sqrt{-1}}\psi_{B}\tr_s\right.\left[g\left.\frac{\partial \mathbb{B}_u'}{\partial u}\right.
	\exp(-(\mathbb{B}_u')^{2})\right] du
	\in \Omega^*(B,\C)/d\Omega^*(B,\C),
	\\
	& \quad\quad\quad\quad\quad\quad\quad\quad\quad\quad\quad\quad\quad\quad\quad\quad\quad
	\hbox{if $\mF$ is even and $\dim Z>0$.} \\
	&\int_0^{\infty} \left.\frac{\sqrt{-1}}{2\pi}\tr_s\right.\left[g\left.\frac{\partial \nabla^E_u}{\partial u}\right.
		\exp\left(-\frac{(\nabla^{E}_u)^2}{2\pi \sqrt{-1}}\right)\right] du
		\in \Omega^*(B,\C)/d\Omega^*(B,\C), 
		\\
		&\quad\quad\quad\quad\quad\quad\quad\quad\quad\quad\quad\quad\quad\quad\quad\quad\quad
		\hbox{if $\dim Z=0$,} \\
	\end{aligned}
	\right.
	\end{multline}
	where $\nabla_u^E=\nabla^E+\sqrt{u}\chi(\sqrt{u})A$.
		
	When $\dim Z=0$, the equivariant geometric family degenerates to the case of Example \ref{d129} a). 
	Then there exists a complex vector bundle $E'$ such that $E_+\oplus E'\simeq
	E_-\oplus E'$ as complex vector bundles.
	As in \cite[Definition B.5.3]{MM07}, from (\ref{d177}) and (\ref{d019}), the equivariant eta form in this case is just the equivariant transgression between $\ch_g(E_+\oplus E',\nabla^{E_+\oplus E'})$ and $\ch_g(E_-\oplus E',\nabla^{E_-\oplus E'})$.
		
	Furthermore, by changing the variable (see also \cite[Remark 2.4]{Liu17}), we could get another form of equivariant eta form:
	\begin{align}\label{i18}
	\tilde{\eta}_g(\mF, A)
	=
	-\int_0^{\infty}\left\{\psi_{\R \times B}\left.\widetilde{\tr}\right.\left[g\exp\left(-\left(
	\mathbb{B}_{u^2}'+du\wedge\frac{\partial}{\partial u}\right)^2\right)\right]\right\}^{du}du.
	\end{align}
		
	Let $(Z', g^{TZ'})$ be an even dimensional Spin$^c$ manifold and $(E', h^{E'}, \nabla^{E'})$ be a $\Z_2$-graded Hermitian vector bundle over $Z'$ with a Hermitian connection $\nabla^{E'}$. Let $\mathrm{pr}_2: B\times Z'\rightarrow Z'$ be the projection onto the second part. Then all the bundles and geometric data above could be pulled back on $B\times Z'$. Thus the fiber bundle $B\times Z'\rightarrow B$ and the structures pulled back by $\mathrm{pr}_2$ form a geometric family $\mF'$ with fibers $Z'$. In this case, $\ind (D(\mF'))$ 
	is a trivial virtual complex vector bundle over $B$.
	It could also be regarded as a locally constant function on $B$
	with values in $\Z$.
	  We assume that the group action on $\mF'$ is trivial. For $\mF\in\mathrm{F}_G^*(B)$,
	let $A$ be a perturbation operator with respect to $D(\mF)$ on $\mF$.
	Let $\tau'$ be the $\Z_2$-grading of $\mS_{Z'}\widehat{\otimes}E'$.
	As in (\ref{eq:1.06}), we define 
	\begin{align}\label{eq:2.66}
	A\widehat{\otimes}1:=A\otimes \tau'
	\end{align}
	on $\mF\times_B\mF'$. By (\ref{eq:1.06}), we have
	\begin{align}
	(D(\mF\times_B\mF')+A\widehat{\otimes}1)^2=(D(\mF)+A)^2\widehat{\otimes}1
	+1\widehat{\otimes}D(\mF')^2>0.
	\end{align}
	Thus $A\widehat{\otimes} 1$ is a perturbation operator with respect to $D(\mF\times_B \mF')$.
	
	\begin{lemma}\label{d180}
	For $g\in G$, we have
	\begin{align}\label{d181}
	\tilde{\eta}_g(\mF\times_B \mF', A\widehat{\otimes} 1)=\tilde{\eta}_g(\mF, A)\cdot \ind (D(\mF'))\in 
	\Omega^*(B,\C)/d\Omega^*(B,\C).
	\end{align}
	Here we consider $\ind(D(\mF'))$ as a locally constant function on $B$
	with values in $\Z$.
	\end{lemma}
	\begin{proof}
		We denote by $\tr|_{\mF}$ the trace operator associated with $\mF$. Then
		from (\ref{i18}),
		\begin{multline}
		\tilde{\eta}_g(\mF\times_B \mF', A\widehat{\otimes} 1)
		\\
		=
		-\int_0^{\infty}\left\{\psi_{\R\times B}\left.\widetilde{\tr}|_{\mF\times_B \mF'}\right.\left[g\exp\left(-\left(
		\mathbb{B}_{u^2}'\widehat{\otimes} 1+1\widehat{\otimes} u D(\mF')+du\wedge\frac{\partial}{\partial u}\right)^2\right)\right]\right\}^{du}du
		\\
		=\int_0^{\infty}\left\{\psi_{\R\times B}\left.\widetilde{\tr}|_{\mF\times_B \mF'}\right.\left[g(1\widehat{\otimes} D(\mF'))\exp\left(-\left(
		\mathbb{B}_{u^2}'\widehat{\otimes} 1\right)^2-\left(1\widehat{\otimes} u D(\mF')\right)^2\right)\right]\right\}du
		\\
		-\int_0^{\infty}\left\{\psi_{\R\times B}\left.\widetilde{\tr}|_{\mF\times_B \mF'}\right.\left[g\exp\left(-\left(
		\mathbb{B}_{u^2}'\widehat{\otimes} 1+du\wedge\frac{\partial}{\partial u}\right)^2-1\widehat{\otimes} u^2 D(\mF')^2\right)\right]\right\}^{du}du
		\\
		=\int_0^{\infty}\left\{\psi_{\R\times B}\left.\widetilde{\tr}|_{\mF}\right.\left[g\exp\left(-\left(
		\mathbb{B}_{u^2}'\right)^2\right)\right]\cdot\tr_s|_{\mF'} \left[D(\mF')\exp\left(-u^2 D(\mF')^2\right)\right]\right\}du
		\\
		-\int_0^{\infty}\left\{\psi_{\R\times B}\left.\widetilde{\tr}|_{\mF}\right.\left[g\exp\left(-\left(
		\mathbb{B}_{u^2}+du\wedge\frac{\partial}{\partial u}\right)^2\right)\right]\cdot\tr_s|_{\mF'} \left[\exp\left(-u^2 D(\mF')^2\right)\right]\right\}^{du}du.
		\end{multline}
		
		From the definition of $\mF'$ and the local index theorem,
		as functions on $B$, we have
		\begin{align}
		\begin{split}
		&\tr_s|_{\mF'} \left[D(\mF')\exp\left(-\left(u^2 D(\mF')^2\right)\right)\right]=0,
		\\
		&\tr_s|_{\mF'} \left[\exp\left(-u^2 D(\mF')^2\right)\right]=\ind (D(\mF')).
		\end{split}
		\end{align}		
		So we get $\tilde{\eta}_g(\mF\times_B \mF', A\widehat{\otimes}1)=\tilde{\eta}_g(\mF, A)\cdot \ind (D(\mF'))$. 
		
		The proof of Lemma \ref{d180} is completed.
	\end{proof}
		
	\subsection{Anomaly formula for odd equivariant geometric families}\label{s0204}
	
	In this subsection, we will study the anomaly formula of the equivariant eta forms for two odd equivariant geometric families $\mF$ and $\mF'$ with the same topological structure.
	In this subsection, we also assume that $G$ acts on $B$ trivially.
	
		Assume that $\mF\in \mathrm{F}_G^1(B)$. Let $A$ be a family of bounded pseudodifferential operator on $\mF$ such that $D(\mF)+A$ is an equivariant $B$-family.
	Let $P$, $Q$ be two equivariant spectral sections with respect to $D(\mF)+A$. 
	Let $A_P$, $A_Q$ be smoothing operators associated with $P$, $Q$.
	Then $A+A_P$ and $A+A_Q$ are perturbation operators of $D(\mF)$.
	In this case, by (\ref{d019}), the difference of 
	$\tilde{\eta}_g(\mF, A+A_P)$ and $\tilde{\eta}_g(\mF, A+A_Q)$
	is closed. Furthermore, we have the following lemma.
	
	\begin{lemma}\label{d106}(Compare with \cite[Proposition 17]{MP97a})
		For any $g\in G$, modulo exact forms on $B$, we have
		\begin{align}\label{d018}
		\tilde{\eta}_g(\mF, A+A_P)-\tilde{\eta}_g(\mF, A+A_Q)=\ch_g([P-Q])\in H^{even}(B,\C).
		\end{align}
	\end{lemma}
	\begin{proof}
		%This Lemma is the equivariant extension of \cite[Proposition 17]{MP97a}. We sketch its proof here using our notation for the completion. 		
		
		Note that $A$, $A_P$, $A_Q$ preserve the $\Z_2$-grading of $E$ 
		and if we reverse the orientation of the fibers, the $\eta$-form
		is changed to its minus.
		From (\ref{d011}) and the orientation reversing trick in the proof of Proposition \ref{d006} (i), we only need to prove the lemma when $Q$ majorizes $P$ and $E_-=0$ in $\mF$.
		
		Let $\widetilde{\mF}$ be the equivariant geometric family defined in (\ref{d212}) such that $\mF_r= \mF$ for any $r\in[0,1]$. Let $\widetilde{\mathbb{B}}_{u}$ be the Bismut superconnection associated with $\widetilde{\mF}$.
		We choose $s>0$ large enough such that
		$P$ and $Q$ satisfy (\ref{d004})  for $f(b)\equiv s$.
		We choose equivariant spectral sections $R'$ and $R''$
		as in (\ref{d173}).
		Since the $\eta$-form is independent of the smoothing operators with respect to the same equivariant spectral section,
		we may choose the smoothing operators $A_P$, $A_Q$ as in (\ref{d173}). Set $A_r:=A+rA_Q+(1-r)A_P$. Let
		\begin{align}
		\widetilde{\mathbb{B}}_{u}'|_{(u,r)}:=\widetilde{\mathbb{B}}_u|_{(u,r)}+\sqrt{u}\chi(\sqrt{u})A_r
		\end{align}
		as in (\ref{d070}).
		We simply denote by 
		\begin{align}
		D_r:=D(\mF)+A_r,\quad \wi{\nabla}:=\nabla^{\cE}+dr\wedge\frac{\partial }{\partial r}.
				\end{align}
		Then from (\ref{e01032}), when 
		$u>2$, we have
		\begin{align}\label{eq:2.72}
	\left(	\widetilde{\mathbb{B}}_{u^2}'\right)^2
		=u^2D_r^2+u [D_r, \wi{\nabla}]+ \wi{\nabla}^2
		+\frac{1}{4}[D_r, c(T)]+\frac{1}{4u}[\wi{\nabla},c(T)]
		+\frac{1}{16u^2}c(T)^2.
		\end{align}

		For a family of bounded operators $\mA_u$, $u\in \R_+$, we write
		$\mA_u=O(u^{-k})$ as $u\to +\infty$ if there exists $C>0$
		such that if $u$ is large enough, the norm of $\mA_u$ is dominated by $C/u^k$.

			Let $\Pi$ be the orthogonal projection onto $[P-Q]$, an equivariant complex vector bundle over $\widetilde{B}$. 
	Let
	\begin{align}
	\begin{split}
	&E_u=\Pi\circ	\left(	\widetilde{\mathbb{B}}_{u^2}'\right)^2\circ \Pi,\quad 
	F_u=\Pi\circ 	\left(	\widetilde{\mathbb{B}}_{u^2}'\right)^2\circ \Pi^{\bot},\quad 
	\\
	&G_u=\Pi^{\bot}\circ 	\left(	\widetilde{\mathbb{B}}_{u^2}'\right)^2\circ \Pi,\quad 
	H_u=\Pi^{\bot}\circ	\left(	\widetilde{\mathbb{B}}_{u^2}'\right)^2\circ \Pi^{\bot}.
	\end{split}
	\end{align}		
			From (\ref{d173}),
	since $\Pi$ commutes with $P, Q, R', R''$, $\Pi Q=\Pi R'=0$
	and $\Pi R''=\Pi P=\Pi$,
			  we have
		\begin{multline}\label{d111} 
		\Pi\circ D_r\circ\Pi=
		r\,\Pi\circ \big(R'(D(\mF)+A)R'+sQR''(I-R') +(I-R'')(D(\mF)+A)(I-R'')
		\\
		-s(I-Q)R''(I-R') \big)\circ\Pi
		+(1-r)\,\Pi\circ \big(R'(D(\mF)+A)R'+sPR''(I-R')
		\\
		 +(I-R'')(D(\mF)+A)(I-R'')-s(I-P)R''(I-R') \big)\circ\Pi
		=
		s(1-2r)\Pi.
		\end{multline}
	Since $D_r$ preserves the splitting $\mathrm{Range}(\Pi)
	\oplus \mathrm{Range}(\Pi^{\bot})$, 
	\begin{align}
	\Pi\circ [D_r, \wi{\nabla}]\circ\Pi=[\Pi\circ D_r\circ \Pi, \ \Pi\circ\wi{\nabla}\circ\Pi]
	=dr\wedge \frac{\partial}{\partial r}(\Pi\circ D_r\circ \Pi)
	=-2sdr\wedge\circ \Pi.
	\end{align}
	Similarly, we have $\Pi\circ [D_r, c(T)]\circ\Pi=0$
and 
\begin{align}
\Pi\circ \wi{\nabla}^2\circ \Pi =\Pi\circ (\nabla^{\cE})^2\circ \Pi.
\end{align}	
	
Let 
\begin{align}\label{eq:2.77}
\begin{split}
&E'= u^2s^2(1-2r)^2\Pi -2us dr\wedge\circ\, \Pi +\Pi\circ (\nabla^{\cE})^2\circ \Pi,
\quad  F'=\Pi^{\bot}\circ [D_r, \wi{\nabla}]\circ \Pi,
\\
& G'=\Pi\circ [D_r, \wi{\nabla}]\circ \Pi^{\bot},\quad
 H'=\Pi^{\bot}\circ D_r^2\circ \Pi^{\bot}.
\end{split}
\end{align}
From (\ref{eq:2.72})-(\ref{eq:2.77}), when $u\to +\infty$,
\begin{align}\label{eq:2.78}
\begin{split}
&E_u=E'+O(u^{-1}),
\quad  F_u=uF'+F''+O(1),
\\
& G_u=uG'+G''+O(1),\quad
H_u=u^2H'+uH''+H'''+O(1),
\end{split}
\end{align}
where $F'', G'', H'', H'''$ are first order differential
operators along the fiber.
	Let 
	\begin{align}
	\nabla^{\Pi}:=\Pi\circ\nabla^{\cE}\circ\Pi.
	\end{align}
%	$\nabla^{\Pi}:=\Pi\circ\nabla^{\cE,\,\mU}\circ\Pi$.
We have
\begin{align}
E'-F'H'^{-1}G'=u^2s^2(1-2r)^2\Pi -2us dr\wedge\circ\, \Pi+(\nabla^{\Pi})^2.
\end{align}
Following the same way as the proof of \cite[Theorem 5.13]{Liu17}, we can obtain
\begin{align}
\exp\left(-	\left(	\widetilde{\mathbb{B}}_{u^2}'\right)^2\right)
=\Pi\circ\exp(-(E'-F'H'^{-1}G'))\circ\Pi+O(u^{-1}).
\end{align}
Thus we have
\begin{align}
\left[\exp\left(-	\left(	\widetilde{\mathbb{B}}_{u^2}'\right)^2\right)\right]^{dr}=2use^{-u^2s^2(1-2r)^2}\Pi\circ\exp\left(-\left(\nabla^{\Pi}\right)^2\right)\circ\Pi+O(u^{-1}).
\end{align}

%		 when $u$ large,
%		\begin{align}\label{d110} 
%		\left(\Pi\circ\widetilde{\mathbb{B}}_{u^2}^{'}\circ\Pi\right)^2=u^2s^2(1-2r)^2-2usdr\wedge+\Pi\left(\nabla^{\cE,\,\mU}\right)^2\Pi+O(u^{-1}).
%		\end{align}
%		As in the proof of \cite[Theorem 9.19]{BGV04} (or following the process in \cite[Section 5]{Liu17}), with respect to the splitting $\Im(\Pi)\oplus\Im(\Pi)^{\bot}$, when $u\rightarrow +\infty$, we can write $\left[\exp\left(-\widetilde{\mathbb{B}}_{u^2}^{'\,2}\right)\right]^{dr}$ as
%		\begin{align}\label{d108}
%		\left[\exp\left(-\widetilde{\mathbb{B}}_{u^2}^{'\,2}\right)\right]^{dr}=
%		\left(\begin{array}{cc}
%		2use^{-u^2s^2(1-2r)^2}\exp\left(-\left(\nabla^{\Pi}\right)^2\right)+O(u^{-1}) & O(u^{-1})\\
%		O(u^{-1}) & O(u^{-2})
%		\end{array}
%		\right).
%		\end{align}
		
		Set $$r_1(u,r)=\left.\left\{\psi_B\tr_s^{\mathrm{odd}}\left[g\exp\left(-	\left(	\widetilde{\mathbb{B}}_{u^2}'\right)^2\right)\right]\right\}^{dr}\right|_{(u,r)}.$$
		From \cite[(2.95)]{Liu17}, modulo exact forms on $B$,
		 we have
		\begin{multline}
		\tilde{\eta}_g(\mF, A+A_P)-\tilde{\eta}_g(\mF, A+A_Q)=\lim_{u\rightarrow+\infty}\int_0^1r_1(u,r)dr
		\\
		=\frac{1}{\sqrt{\pi}}\lim_{u\rightarrow+\infty}\int_0^12use^{-u^2s^2(1-2r)^2}dr\cdot\psi_B\tr[g\exp(-(\nabla^{\Pi})^2)]
		\\
		=\frac{1}{\sqrt{\pi}}\lim_{u\rightarrow+\infty}
		\int_{-us}^{us}e^{-x^2}dx\cdot\ch_g([P-Q])=\ch_g([P-Q]).
		\end{multline}
%		The uniformly convergence condition needed in \cite[(2.95)]{Liu17} relies on (\ref{d108}).
		
	The proof of Lemma \ref{d106} is completed.
	\end{proof}

	Using Lemma \ref{d106}, we obtain the anomaly formula in odd case as follows. %The even case will be proved later.
	\begin{prop}\label{d022}(Compare with \cite[Theorem 0.1]{DZ98})
		Let $\mF$, $\mF'\in \mathrm{F}_G^1(B)$ which have the same topological structure.	Let $A$, $A'$ be perturbation operators with respect to $D(\mF)$, $D(\mF')$
		and $P$, $P'$ be APS projections with respect to $D(\mF)+A$, $D(\mF')+A'$ respectively.
		For any $g\in G$, modulo exact forms on $B$, we have
		\begin{align}\label{d023}
		\tilde{\eta}_g(\mF', A')-\tilde{\eta}_g(\mF, A)=\widetilde{\mathrm{FLI}}_g(\mF, \mF')
		+
		\ch_g\left(\mathrm{sf}_G\{(D(\mF)+A, P), (D(\mF')+A', P')\}\right).
		\end{align}
	\end{prop}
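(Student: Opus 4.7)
The plan is to compare the two eta forms via a transgression argument on the interpolating base $\widetilde{B}=[0,1]\times B$, then absorb the boundary discrepancies into the equivariant higher spectral flow through Lemma \ref{d106}. Since $\mF$ and $\mF'$ share the same topological structure, the deformation $\widetilde{\mF}$ of (\ref{d212}) satisfies $\ind(D(\widetilde{\mF}))=\ind(D(\mF))=0\in K_G^1(\widetilde{B})$. First I would choose a smooth path of bounded pseudodifferential perturbations $\widetilde{A}^{(0)}$ on $\widetilde{\mF}$ interpolating $A$ and $A'$, without demanding that $D(\widetilde{\mF})+\widetilde{A}^{(0)}$ be invertible for intermediate $r$. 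By Proposition \ref{d006}(i),(iii), there is an equivariant spectral section $\widetilde{P}$ for $D(\widetilde{\mF})+\widetilde{A}^{(0)}$ together with a smooth operator $A_{\widetilde{P}}$ whose range lies in finitely many eigenspaces, so that $D(\widetilde{\mF})+\widetilde{A}^{(0)}+A_{\widetilde{P}}$ is invertible on $\widetilde{B}$ and $\widetilde{P}$ is its APS projection.

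The next step is the transgression. Let $\widetilde{\mathbb{B}}_u'$ denote the Bismut superconnection of $\widetilde{\mF}$ perturbed by $\sqrt{u}\chi(\sqrt{u})(\widetilde{A}^{(0)}+A_{\widetilde{P}})$. By (\ref{d019}), the equivariant eta form $\tilde{\eta}_g(\widetilde{\mF},\widetilde{A}^{(0)}+A_{\widetilde{P}})\in\Omega^*(\widetilde{B},\C)/\Im d$ satisfies $d\tilde{\eta}_g(\widetilde{\mF},\widetilde{A}^{(0)}+A_{\widetilde{P}})=\mathrm{FLI}_g(\widetilde{\mF})$. Extracting the $dr$-component and integrating over $r\in[0,1]$ produces, modulo exact forms on $B$,
\[
\tilde{\eta}_g(\mF',A'+A_{\widetilde{P}_1})-\tilde{\eta}_g(\mF,A+A_{\widetilde{P}_0})=\widetilde{\mathrm{FLI}}_g(\mF,\mF'),
\]
where $\widetilde{P}_i:=\widetilde{P}|_{r=i}$ and $\widetilde{\mathrm{FLI}}_g(\mF,\mF')$ is the Chern--Simons form of (\ref{d112}). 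At $r=0$, both $P$ and $\widetilde{P}_0$ are spectral sections of the same operator $D(\mF)+A$: $P$ is its APS projection, while $\widetilde{P}_0$ inherits the spectral gap condition (\ref{d004}) for $D(\mF)+A$ because the range of the smoothing operator $A_{\widetilde{P}_0}$ sits inside finitely many eigenspaces, so $D(\mF)+A$ and $D(\mF)+A+A_{\widetilde{P}_0}$ coincide outside a finite-rank subspace. Lemma \ref{d106} applied with associated smooth operators $0$ for $P$ and $A_{\widetilde{P}_0}$ for $\widetilde{P}_0$ gives
\[
\tilde{\eta}_g(\mF,A+A_{\widetilde{P}_0})-\tilde{\eta}_g(\mF,A)=\ch_g([\widetilde{P}_0-P]),
\]
and symmetrically at $r=1$.

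Substituting these three identities, using the antisymmetry $[Q-P]=-[P-Q]$ which follows from (\ref{d009}), produces
\[
\tilde{\eta}_g(\mF',A')-\tilde{\eta}_g(\mF,A)=\widetilde{\mathrm{FLI}}_g(\mF,\mF')+\ch_g([P'-\widetilde{P}_1])-\ch_g([P-\widetilde{P}_0]).
\]
Applying Definition \ref{d152} to the reparametrized path $r\mapsto\widetilde{\mF}|_{1-r}$ with total spectral section $\widetilde{P}|_{1-r}$ identifies $[P'-\widetilde{P}_1]-[P-\widetilde{P}_0]$ with $\mathrm{sf}_G\{(D(\mF')+A',P'),(D(\mF)+A,P)\}$, completing (\ref{d023}). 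The main obstacle is the transgression step itself: one must extract and integrate the $dr$-component, interchange the $r$- and $u$-integrations, and verify the limits (\ref{d101}) and (\ref{d016}) uniformly on $\widetilde{B}$. The uniform invertibility of $D(\widetilde{\mF})+\widetilde{A}^{(0)}+A_{\widetilde{P}}$ throughout $[0,1]$, ensured by $A_{\widetilde{P}}$, provides the exponential large-$u$ decay that makes both the interchange and the limits legitimate, paralleling the argument in \cite[Theorem 1.7]{Liuetaform}.
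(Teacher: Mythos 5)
Your proof follows essentially the same route as the paper: interpolate $\mF$, $\mF'$, $A$, $A'$ over $\widetilde{B}=[0,1]\times B$, pick a total equivariant spectral section with an associated smooth operator to make the deformed family uniformly invertible, transgress to get $\widetilde{\mathrm{FLI}}_g(\mF,\mF')$ at the boundary (the paper's equation (\ref{d025}), invoking \cite[Theorem 1.7]{Liuetaform}), and absorb the endpoint mismatches via Lemma \ref{d106} and Definition \ref{d152}. Your bookkeeping of the signs and the reversal of the path matches the paper, and the extra remark about $\widetilde{P}_0$ inheriting the spectral gap condition is a harmless restatement of the fact that the restriction of a total spectral section for $D(\widetilde{\mF})+\widetilde{A}^{(0)}$ at $r=0$ is by definition a spectral section for $D(\mF)+A$.
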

	\begin{proof}
		Let $\widetilde{\mF}$ be the equivariant geometric family defined in (\ref{d212}).
		Let $D_r=D(\mF_r)+(1-r)A+r A'$ and $\widetilde{D}=\{D_r\}_{r\in[0,1]}$ on $\widetilde{\mF}$.
		Since the equivariant family index of $D(\mF)$ vanishes, so are $D_r$ and $\widetilde{D}$.
		If we consider the
		total family $\widetilde{\mF}$, from Proposition \ref{d006}(i), there exists a total equivariant spectral
		section $\widetilde{P}$ of $\widetilde{D}$. Let $P_r$ be the restriction of $\widetilde{P}$ over $\{r\}\times B$.
		Then it is an equivariant spectral section of $D_r$.
		Let $A_{P_r}$ be an equivariant smoothing operator associated with $P_r$.
		Following the proof of \cite[Theorem 2.7]{Liu17}, we can get
		\begin{align}\label{d025}
		\tilde{\eta}_g(\mF', A'+A_{P_1})-\tilde{\eta}_g(\mF, A+A_{P_0})=\widetilde{\mathrm{FLI}}_g(\mF, \mF').
		\end{align}
		Thus Proposition \ref{d022} follows from Lemma \ref{d106}, (\ref{d103}) and (\ref{d025}).
		
		The proof of Proposition \ref{d022} is completed.
	\end{proof}

	\subsection{Functoriality of equivariant eta forms}\label{s0205}
In this subsection, we will study the functoriality of the equivariant eta forms and use it to prove the anomaly formula of equivariant eta forms for even equivariant geometric families.
	In this subsection, we use the notation in Section 1.4 and assume that $G$ acts trivially on $B$.
	
Recall that in (\ref{e02003}), $TZ=T_{\pi_X}^HZ\oplus TX$. 
Let $\nabla^{TY, TX}$ be the connection on $TZ$ defined by
\begin{align}\label{e02114}
\nabla^{TY, TX}=\pi_X^*\nabla^{TY}\oplus\nabla^{TX}
\end{align}
as in (\ref{e01015}). 

Let $\nabla$, $\nabla'$ be Euclidean connections on $(TZ,g^{TZ})$
and $\nabla^{L_Z}$, $\nabla^{'L_Z}$ be Hermitian connections on $(L_Z,h^{L_Z})$.
Similarly as (\ref{d015}), we define
\begin{align}\label{d183}
\mathrm{FLI}_g(\nabla, \nabla^{L_Z}):=\int_{Z^g}\td_g(\nabla, \nabla^{L_Z})\,
\ch_g(E, \nabla^{E}).
\end{align}
As in (\ref{e01127}) and (\ref{e01110}), there exists a well-defined equivariant Chern-Simons form $\widetilde{\td}_g(\nabla, \nabla^{L_Z}, \nabla', \nabla^{'L_Z})\in \Omega^*(W^g,\C)/d\Omega^*(W^g,\C)$ such that
\begin{align}\label{d182}
d^{W^g}\,\widetilde{\td}_g(\nabla, \nabla^{L_Z}, \nabla', \nabla^{'L_Z})=\td_g(\nabla', \nabla^{'L_Z})-\td_g(\nabla, \nabla^{L_Z}).
\end{align}
Set
\begin{align}\label{d195}
\widetilde{\mathrm{FLI}}_g(\nabla, \nabla^{L_Z}, \nabla', \nabla^{'L_Z}):=\int_{Z^g}\widetilde{\td}_g(\nabla, \nabla^{L_Z}, \nabla', \nabla^{'L_Z})\,
\ch_g(E, \nabla^{E}).
\end{align}
From (\ref{eq:3.23}) and (\ref{d182}), we have 
\begin{align}\label{d184}
d^B\,\widetilde{\mathrm{FLI}}_g(\nabla, \nabla^{L_Z}, \nabla', \nabla^{'L_Z})=\mathrm{FLI}_g(\nabla', \nabla^{'L_Z})-\mathrm{FLI}_g(\nabla, \nabla^{L_Z}).
\end{align}

From the proof of Lemma \ref{d089}, we obtain that if $A_X$ is a perturbation
operator of $D(\mF_X)$, there exists $T'>0$ such that when $T\geq T'$,
$1\widehat{\otimes}TA_X$ is a perturbation operator of $D(\mF_{Z,T})$.

Note that when $A_X$ is a family of smoothing operators along the fibers $X$, $1\widehat{\otimes}A_X$ is only bounded, not a family of smoothing operators along the fibers $Z$. This is the reason for us to define the eta form for the bounded perturbation operator instead of the smoothing operator in \cite{Bu09,BS09,DZ98,MP97a}.
	
	The following technical lemma is a modification of the main result in \cite{Liu17}. The proof of it will be left to the next subsection.
	
	\begin{lemma}\label{d030}
		%Let $T'\geq 1$ be the constant taking in the proof of Lemma \ref{d089}.
		%Let $A_X$ be a perturbation operator with respect to $D(\mF_X)$.
		Modulo exact forms on $B$, for $T\geq T'$, we have
		\begin{multline}\label{d037}
		\widetilde{\eta}_g(\mF_{Z, T}, 1\widehat{\otimes}TA_{X})=\int_{Y^g}\td_g(\nabla^{TY}, \nabla^{L_Y})\, \widetilde{\eta}_g(\mF_{X},A_X)
		\\
		+
		\widetilde{\mathrm{FLI}}_g\left(\nabla^{TY,TX}, \nabla^{L_Z}, \nabla_{T}^{TZ}, \nabla^{L_Z} \right).
		\end{multline}
		Here $\nabla_T^{TZ}$ is the connection associated with $(T_{\pi_Z}^HW, g_T^{TZ})$ as in (\ref{e01014}).
	\end{lemma}
		
	Using Lemma \ref{d030}, we could extend the anomaly formula Proposition \ref{d022} to the general case.
		\begin{thm}\label{d176}
			Let $\mF$, $\mF'\in \mathrm{F}_G^*(B)$ which have the same topological structure.	Let $A$, $A'$ be perturbation operators with respect to $D(\mF)$, $D(\mF')$
			and $P$, $P'$ be the APS projections with respect to $D(\mF)+A$, $D(\mF')+A'$ respectively.
			For any $g\in G$, modulo exact forms on $B$, we have
			\begin{align}\label{d178}
			\tilde{\eta}_g(\mF', A')-\tilde{\eta}_g(\mF, A)=\widetilde{\mathrm{FLI}}_g(\mF, \mF')
			+
			\ch_g\left(\mathrm{sf}_G\{(D(\mF)+A, P), (D(\mF')+A', P')\}\right).
			\end{align}
		\end{thm}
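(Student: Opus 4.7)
Since Proposition \ref{d022} already settles the case $\mF, \mF' \in \mathrm{F}_G^1(B)$, the task is to prove Theorem \ref{d176} when $\mF, \mF' \in \mathrm{F}_G^0(B)$. The plan is to reduce the even case to the odd one via the auxiliary family $\mF^L$ of Example \ref{d129}(c) --- precisely the device that Definition \ref{d15} uses to build the even equivariant higher spectral flow --- and then integrate the resulting identity along the fiber $S^1$ of $B \times S^1 \to B$. Concretely, I would form the odd equivariant geometric families $\widetilde{\mF} := p_1^*\mF \cup p_2^*\mF^L$ and $\widetilde{\mF}' := p_1^*\mF' \cup p_2^*\mF^L$ over $B \times S^1$; they share the same topological structure, and the perturbations lift to $\widetilde{A} := A \widehat{\otimes} 1$, $\widetilde{A}' := A' \widehat{\otimes} 1$. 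By the computation just before (\ref{d138}) the operators $D(\widetilde{\mF}) + \widetilde{A}$ and $D(\widetilde{\mF}') + \widetilde{A}'$ are invertible equivariant $B \times S^1$-families whose APS projections $\widetilde{P}, \widetilde{P}'$ are exactly the spectral sections used in Definition \ref{d15} to construct $\mathrm{sf}_G\{(D(\mF')+A', P'), (D(\mF)+A, P)\}$.

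Applying Proposition \ref{d022} on $B \times S^1$ to this data yields
\[
\tilde{\eta}_g(\widetilde{\mF}', \widetilde{A}') - \tilde{\eta}_g(\widetilde{\mF}, \widetilde{A}) = \widetilde{\mathrm{FLI}}_g(\widetilde{\mF}, \widetilde{\mF}') + \ch_g\bigl(\mathrm{sf}_G\{(D(\widetilde{\mF}') + \widetilde{A}', \widetilde{P}'), (D(\widetilde{\mF}) + \widetilde{A}, \widetilde{P})\}\bigr)
\]
modulo exact forms. I then integrate both sides along the $S^1$-fiber. By (\ref{d131}) together with the definition (\ref{d174}) of even equivariant higher spectral flow, the spectral-flow term on the right integrates to $\ch_g(\mathrm{sf}_G\{(D(\mF')+A', P'), (D(\mF)+A, P)\})$. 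For the Chern--Simons term, the product structure of $\widetilde{\mF}, \widetilde{\mF}'$ combined with Fubini and the small-time limit of Theorem \ref{d069} applied to $\mF^L$ --- whose $S^1$-fiber integral of the local index density equals $1$ because $\ind D(\mF^L)$ generates $K^1(S^1)$ --- gives $\int_{S^1}\widetilde{\mathrm{FLI}}_g(\widetilde{\mF}, \widetilde{\mF}') = \widetilde{\mathrm{FLI}}_g(\mF, \mF')$.

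The remaining and decisive step is the product identity
\[
\int_{S^1} \tilde{\eta}_g(\widetilde{\mF}, \widetilde{A}) \equiv \tilde{\eta}_g(\mF, A) \pmod{d\Omega^*(B,\C)},
\]
and its analogue with primes; this is the main obstacle. Because $p_1^*\mF$ and $p_2^*\mF^L$ live over disjoint factors of $B \times S^1$, the Bismut superconnection factorizes as $\widetilde{\mathbb{B}}_u' = \mathbb{B}_u'(p_1^*\mF) \widehat{\otimes} 1 + 1 \widehat{\otimes} \mathbb{B}_u(p_2^*\mF^L)$ with supercommuting summands, so the heat operator factorizes and the integrand of (\ref{i19}) for the odd family $\widetilde{\mF}$ splits into the product of the integrand for $\mF$ with the local-index density of $\mF^L$; the $S^1$-integration of the latter then contributes the constant $1$. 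The subtlety is that Lemma \ref{d180} only treats products with a trivial-action even family and so does not directly cover the non-trivial odd $\mF^L$: one must track carefully the sign and grading conventions of Section 1.1 for the Clifford action on the product spinor and the supertrace $\widetilde{\tr}$, justify the exchange of $u$-derivative, trace and $S^1$-integration from the uniform estimates underlying \cite{Liuetaform}, and evaluate the $S^1$-integrated $\mF^L$-factor via Theorem \ref{d069}. Once this product formula is established, combining it with the two preceding steps completes the proof of the even case and hence of Theorem \ref{d176}.
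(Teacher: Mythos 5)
Your overall strategy matches the paper's: reduce the even case to the odd one by cup-producting with $\mF^L$, apply Proposition \ref{d022} to the resulting odd families over $B\times S^1$, and then integrate along $S^1$, using Definition \ref{d15} and (\ref{d131}) to identify the spectral-flow term. Where you diverge from the paper is exactly in the step you flag as ``decisive'' --- relating $\int_{S^1}\tilde\eta_g(\widetilde\mF,\widetilde A)$ to $\tilde\eta_g(\mF,A)$ --- and it is here that your sketch has a genuine gap.

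You argue by factorizing the Bismut superconnection and claim that the integrand of (\ref{i19}) for $\widetilde\mF$ ``splits into the product'' of the integrand for $\mF$ with the local-index density of $\mF^L$. This is not correct as stated: after the factorization $\widetilde{\mathbb{B}}'_u=\mathbb{B}'_u(p_1^*\mF)\widehat\otimes 1+1\widehat\otimes\mathbb{B}_u(p_2^*\mF^L)$, the $u$-derivative in (\ref{i19}) obeys the Leibniz rule, so the supertrace splits into a \emph{sum of two products}: the term you account for, namely $\bigl(\partial_u\mathbb{B}'_u(p_1^*\mF)\bigr)$-integrand times the $\mF^L$-heat-trace, plus a second cross-term coming from $\partial_u\mathbb{B}_u(p_2^*\mF^L)$. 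This second term is the $\mF$-heat-trace wedged with the transgression density of $\mF^L$ and is not zero as a form; your sketch neither names it nor disposes of it. Moreover, the constant you extract from the $\mF^L$-factor holds only because its $S^1$-integral is $u$-independent (being a closed form with fixed cohomology class), which also deserves a line of justification. Finally, even granting all this, the identity $\int_{S^1}\tilde\eta_g(\widetilde\mF,\widetilde A)\equiv\tilde\eta_g(\mF,A)$ is stronger than what the paper actually proves at the analogous point: (\ref{d187}) carries an extra Chern--Simons correction $\widetilde{\mathrm{FLI}}_g\bigl(\nabla^{T(Z\times S^1)},\nabla^{L_Z},\nabla^{TZ,TS^1},\nabla^{L_Z}\bigr)$, which is subsequently absorbed into $\widetilde{\mathrm{FLI}}_g(\mF,\mF')$ via the cocycle property of Chern--Simons forms in (\ref{d211}). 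For your clean identity to hold this correction must vanish; that is plausible for this product geometry, but you would still have to prove it.

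The paper sidesteps all of this by the observation (\ref{d210}) that $p_1!(p_1^*\mF\cup p_2^*\mF^L)=\mF\cup\mF_0$, which turns the decisive step into an application of Lemma \ref{d180} (product with a trivial even family) followed by the functoriality Lemma \ref{d030} at $T=1$. That route automatically produces and tracks the Chern--Simons correction and avoids any direct manipulation of heat traces. You should either invoke (\ref{d210}) and the two lemmas as the paper does, or repair your direct factorization by treating the Leibniz cross-term and the Chern--Simons correction explicitly.
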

		\begin{proof}
			We only need to prove the even case. %We will add an additional dimension as in Example \ref{d129} d) such that the new family is odd.
			
			 Let $L\rightarrow S^1\times S^1$ be the Hermitian line bundle in Example \ref{d129} c) with $\nabla^L$ constructed there. 
			 We use the notation in Example \ref{d129} c).
			 Let $p: B\times S^1\times S^1\rightarrow S^1\times S^1$ be the natural projection.  Then all the bundles and geometric data in $\mF^L$ could be pulled back on $B\times S^1\times S^1$. Thus the fiber bundle $B\times S^1\times S^1\rightarrow B$ and the structures pulled back by $p$ form an even geometric family $\mF_0$ over $B$. In this case, $\ind (D(\mF_0))=1$.
			 Here we consider $\ind (D(\mF_0))$ as a locally constant function on $B$ as in Lemma \ref{d180}.
			  The key observation is
			 \begin{align}\label{d210} 
			 	p_1!(p_1^*\mF\times_{B\times S^1} p_2^*\mF^L)=\mF\times_B \mF_0.
			 \end{align}
%			 where $\mF^L$ is the equivariant geometric family in Example \ref{d129} c).
			
			Recall that $A\widehat{\otimes} 1_{\mF_0}$ is defined in (\ref{eq:2.66}).
			%in the same way as $1\widehat{\otimes}A_X$ in the proof of Lemma \ref{d089}.
			Since $A\widehat{\otimes} 1_{\mF_0}$ is a perturbation operator of $D(\mF\times_B \mF_0)$, we could choose $T'=1$ in Lemma \ref{d030}.
			By Lemmas \ref{d180} and \ref{d030}, we have
			\begin{multline}\label{d187}
			\tilde{\eta}_g(\mF, A)=\tilde{\eta}_g(\mF\times_B \mF_0, A\widehat{\otimes} 1_{\mF_0})
			\\
			=
			\int_{S^1}\tilde{\eta}_g(p_1^*\mF\times_{B\times S^1} p_2^*\mF^L, A\widehat{\otimes} 1_{\mF^L})-\widetilde{\mathrm{FLI}}_g\left(\nabla^{T(Z\times S^1)}, \nabla^{L_Z}, \nabla^{TZ,TS^1}, \nabla^{L_Z} \right).
			\end{multline}
			
			As in (\ref{eq:2.11}), $D(p_1^*\mF\times_{B\times S^1} p_2^*\mF^L)=D(\mF)\otimes 1+\tau\otimes D^L$.
			By Proposition \ref{d022}, the construction of the equivariant higher spectral flow for even case and (\ref{d131}), we have
			\begin{multline}\label{d211}
			\tilde{\eta}_g(\mF', A')-\tilde{\eta}_g(\mF, A)
			\\
			=
			\int_{S^1}\left\{\tilde{\eta}_g(p_1^*\mF'\times_{B\times S^1} p_2^*\mF^L, A'\widehat{\otimes} 1_{\mF^L})-\tilde{\eta}_g(p_1^*\mF\times_{B\times S^1} p_2^*\mF^L, A\widehat{\otimes} 1_{\mF^L})\right\}
			\\
			+\int_{S^1}\int_{Z^g}\left\{\widetilde{\td}_g\left(\nabla^{T(S^1\times Z')}, \nabla^{L_{Z'}}, \nabla^{TS^1,TZ'}, \nabla^{L_{Z'}}\right)\ch_g(E',\nabla^{E'})
			\right.
			\\
			-\left.\widetilde{\td}_g\left(\nabla^{T(S^1\times Z)}, \nabla^{L_{Z}}, \nabla^{TS^1,TZ}, \nabla^{L_{Z}}\right)
			\ch_g(E,\nabla^E)\right\}
			\\
			=\widetilde{\mathrm{FLI}}_g(\mF, \mF')
			+\int_{S^1}\ch_g\left(\mathrm{sf}_G\{(D(p_1^*\mF\times_{B\times S^1} p_2^*\mF^L)+A\widehat{\otimes} 1, P_0), (D(p_1^*\mF'\times_{B\times S^1} p_2^*\mF^L)+A'\widehat{\otimes} 1, P_0')\}\right)
			\\
			=\widetilde{\mathrm{FLI}}_g(\mF, \mF')+\ch_g\left(\mathrm{sf}_G\{(D(\mF)+A, P), (D(\mF')+A', P')\}\right),
			\end{multline}
			where $P_0$, $P_0'$ are the associated APS projections respectively. Note that in order to adapt the sign convention (\ref{e01136}), the sign in the beginning of the fifth line of (\ref{d211}) is alternated.  
					
		The proof of Theorem \ref{d176} is completed.
		
	\end{proof}
	
	Using Theorem \ref{d176}, we could write Lemma \ref{d030} as a more elegant form.
	 \begin{thm}\label{d188}
	 	Let $A_Z$ and $A_X$ be perturbation operators with respect to $D(\mF_Z)$ and $D(\mF_X)$.
	 	Then modulo exact forms on $B$, for $T\geq 1$ large enough, we have
	 	\begin{multline}\label{d189}
	 	\widetilde{\eta}_g(\mF_{Z}, A_{Z})=\int_{Y^g}\td_g(\nabla^{TY}, \nabla^{L_Y})\, \widetilde{\eta}_g(\mF_{X},A_X)
	 +
	 	\widetilde{\mathrm{FLI}}_g\left(\nabla^{TY,TX}, \nabla^{L_Z}, \nabla_{}^{TZ}, \nabla^{L_Z} \right)
	 	\\
	 	+\ch_g(\mathrm{sf}_G\{(D(\mF_{Z,T})+ 1\widehat{\otimes}TA_X, P), (D(\mF_Z)+A_Z, P')\}),%\in \Omega^*(B^g, \C)/\Im \,d.
	 	\end{multline}
	 	where $P$ and $P'$ are the associated APS projections respectively.
	 \end{thm}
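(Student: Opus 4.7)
The plan is to deduce Theorem \ref{d188} by combining Lemma \ref{d030} with the anomaly formula Theorem \ref{d176}, using the cocycle property of the equivariant Chern--Simons form $\widetilde{\mathrm{FLI}}_g$ to collapse two Chern--Simons terms into one.

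First I would observe that $\mF_Z$ and $\mF_{Z,T}$ have the same topological structure for every $T\geq 1$, since the only difference between them is the rescaling of the vertical metric $g^{TX}\mapsto T^{-2}g^{TX}$ (and the induced connection $\nabla^{TZ}\mapsto \nabla_T^{TZ}$), with $L_Z$, $\nabla^{L_Z}$, $E$, $h^E$, $\nabla^E$ and the orientation left untouched. Fix $T\geq T'$ as in the proof of Lemma \ref{d089}, so that $1\widehat{\otimes}TA_X$ is a genuine perturbation operator with respect to $D(\mF_{Z,T})$. Then Theorem \ref{d176} applied to the pair $(\mF_{Z,T},1\widehat{\otimes}TA_X)$ and $(\mF_Z,A_Z)$ yields, modulo exact forms,
\begin{multline}\label{prop-step1}
\widetilde{\eta}_g(\mF_Z,A_Z)-\widetilde{\eta}_g(\mF_{Z,T},1\widehat{\otimes}TA_X)
=\widetilde{\mathrm{FLI}}_g(\mF_{Z,T},\mF_Z)\\
+\ch_g\!\left(\mathrm{sf}_G\{(D(\mF_Z)+A_Z,P),(D(\mF_{Z,T})+1\widehat{\otimes}TA_X,P')\}\right).
\end{multline}

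Second, I would substitute Lemma \ref{d030} for $\widetilde{\eta}_g(\mF_{Z,T},1\widehat{\otimes}TA_X)$ on the left-hand side of \eqref{prop-step1}, which brings in the correction $-\widetilde{\mathrm{FLI}}_g(\nabla_T^{TZ},\nabla^{L_Z},\nabla^{TY,TX},\nabla^{L_Z})$ together with the desired integral $\int_{Y^g}\td_g(\nabla^{TY},\nabla^{L_Y})\wedge\widetilde{\eta}_g(\mF_X,A_X)$. The remaining task is to simplify
\begin{equation*}
\widetilde{\mathrm{FLI}}_g(\mF_{Z,T},\mF_Z)-\widetilde{\mathrm{FLI}}_g(\nabla_T^{TZ},\nabla^{L_Z},\nabla^{TY,TX},\nabla^{L_Z}),
\end{equation*}
which equals
\begin{equation*}
\widetilde{\mathrm{FLI}}_g(\nabla_T^{TZ},\nabla^{L_Z},\nabla^{TZ},\nabla^{L_Z})-\widetilde{\mathrm{FLI}}_g(\nabla_T^{TZ},\nabla^{L_Z},\nabla^{TY,TX},\nabla^{L_Z}).
\end{equation*}
By the standard cocycle (transgression) property of equivariant Chern--Simons forms, which follows from \eqref{e01127}--\eqref{e01110} and \eqref{d182}--\eqref{d184} applied to a three-edge path of connections $(\nabla^{TY,TX}\to\nabla_T^{TZ}\to\nabla^{TZ})$, this difference is congruent modulo exact forms to
\begin{equation*}
\widetilde{\mathrm{FLI}}_g(\nabla^{TY,TX},\nabla^{L_Z},\nabla^{TZ},\nabla^{L_Z})
=-\widetilde{\mathrm{FLI}}_g(\nabla^{TZ},\nabla^{L_Z},\nabla^{TY,TX},\nabla^{L_Z}).
\end{equation*}
Substituting this identity back recovers precisely the right-hand side of \eqref{d189}.

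I do not expect a genuine obstacle in this argument, because the two substantive ingredients---the anomaly formula (Theorem \ref{d176}) and the functorial identity with the rescaled metric (Lemma \ref{d030})---are already available. The only thing requiring care is bookkeeping of sign conventions: one must verify that the orientation of the higher spectral flow in \eqref{prop-step1} matches the ordering $(D(\mF_Z)+A_Z,P)$, $(D(\mF_{Z,T})+1\widehat{\otimes}TA_X,P')$ adopted in the statement, and that the transitivity of $\widetilde{\mathrm{FLI}}_g$ is applied with the right orientation along the path of connections. Both are direct checks against \eqref{d178} and \eqref{d182}, so the proof reduces to a three-line algebraic manipulation once \eqref{prop-step1} and Lemma \ref{d030} are on the table.
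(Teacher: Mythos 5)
Your proposal is correct and follows exactly the route the paper takes: the paper's entire proof of Theorem~\ref{d188} is the single sentence ``Using Theorem \ref{d176}, we could write the Lemma \ref{d030} as a more elegant form,'' and what you have written is precisely the bookkeeping implicit in that remark, namely applying the anomaly formula to the pair $(\mF_{Z,T},1\widehat{\otimes}TA_X)$ and $(\mF_Z,A_Z)$, substituting Lemma~\ref{d030}, and collapsing the two Chern--Simons terms $\widetilde{\mathrm{FLI}}_g(\mF_{Z,T},\mF_Z)$ and $-\widetilde{\mathrm{FLI}}_g(\nabla_T^{TZ},\nabla^{L_Z},\nabla^{TY,TX},\nabla^{L_Z})$ via the transgression cocycle identity; since $E$, $\nabla^E$, $L_Z$, $\nabla^{L_Z}$ are unchanged under the rescaling, $\widetilde{\mathrm{FLI}}_g(\mF_{Z,T},\mF_Z)$ indeed reduces to $\widetilde{\mathrm{FLI}}_g(\nabla_T^{TZ},\nabla^{L_Z},\nabla^{TZ},\nabla^{L_Z})$, which is the key observation making the cancellation go through. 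Your sign and labelling of $P$, $P'$ in the spectral-flow term also match the statement once the variables $(\mF,A,P)\mapsto(\mF_{Z,T},1\widehat{\otimes}TA_X,\cdot)$ and $(\mF',A',P')\mapsto(\mF_Z,A_Z,\cdot)$ in \eqref{d178} are relabelled to the conventions of the theorem.
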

	 
	 From Theorems \ref{d176} and \ref{d188}, we could extend Lemma \ref{d180} to the general case.
	 	 
	 \begin{thm}\label{d198}(Compare with \cite[(24)]{BS09})
	 	Let $\mF, \mF'\in \mathrm{F}_G^*(B)$. Let $A$ and $A'$ be the perturbation operators with respect to $D(\mF)$ and $D(\mF\times_B\mF')$.
	 	Then there exists $x\in K_G^*(B)$, such that
	 	\begin{align}
	 	\widetilde{\eta}_g(\mF\times_B\mF', A')=\widetilde{\eta}_g(\mF, A)\, \mathrm{FLI}_g(\mF')+\ch_g(x).
	 	\end{align}
	 \end{thm}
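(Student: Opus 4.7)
The plan is to realise $\mF\cup\mF'$ as the composition of two submersions and invoke the functoriality theorem (Theorem~\ref{d188}). Since $A$ is a perturbation operator for $D(\mF)$, one has $\ind(D(\mF))=0$, hence $\ind(D(\mF\cup\mF'))=\ind(D(\mF))\cup\ind(D(\mF'))=0$ by (\ref{d170}), so perturbation operators $A'$ for $D(\mF\cup\mF')$ exist. The anomaly formula (Theorem~\ref{d176}) implies that any two such $A'$ yield eta forms differing by the Chern character of an equivariant higher spectral flow in $K_G^*(B)$; it therefore suffices to prove the identity for one convenient choice and then absorb the difference into $\ch_g(x)$.

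Factor the cup product as the tower $\pi_Z:W\times_B W'\xrightarrow{\pi_X}W'\xrightarrow{\pi_Y=\pi'}B$, set $\mF_Y:=\mF'$, and let $\mF_X$ be the equivariant geometric family over $V=W'$ obtained by first pulling $\mF$ back via $\pi_Y$ and then additionally twisting its coefficient bundle by $\pi_X^*E'$; its twisting bundle on $W\times_B W'$ is $p_1^*E\otimes p_2^*E'$, matching that of $\mF\cup\mF'$. With the product metric $g^{TZ}\oplus g^{TZ'}$ on the relative tangent bundle of $\pi_Z$ and the horizontal distribution on $\pi_Z$ induced by those of $\pi$ and $\pi'$, the iterated family $\mF_Z$ constructed as in Lemma~\ref{d089} coincides with $\mF\cup\mF'$. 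A Koszul-formula computation, using orthogonality of $TZ$ and $TZ'$ and the vanishing Lie brackets of vertical lifts from the two factors, further shows that the induced connection on the composite vertical bundle equals $\nabla^{TY,TX}=\pi_X^*\nabla^{TY}\oplus\nabla^{TX}$, so the Chern--Simons correction $\widetilde{\mathrm{FLI}}_g(\nabla^{TZ},\nabla^{L_Z},\nabla^{TY,TX},\nabla^{L_Z})$ appearing in Theorem~\ref{d188} vanishes identically.

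Pick the perturbation $A_X:=\pi_Y^*A\,\widehat{\otimes}\,1_{E'}$ on $\mF_X$, which is valid because $(D(\mF)+A)\widehat{\otimes}1_{E'_y}$ is still fibrewise invertible on each fibre of $\pi_X$. With the compatible horizontal distribution on $\pi_X$, the perturbed Bismut superconnection splits as
\begin{equation*}
\mathbb{B}_u'(\mF_X)=\pi_Y^*\mathbb{B}_u'(\mF)\widehat{\otimes}1+1\widehat{\otimes}\nabla^{E'},
\end{equation*}
the two summands being odd and super-commuting by the Koszul sign rule on $\mS_Z\widehat{\otimes}E\widehat{\otimes}E'$. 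The heat super-trace therefore factorises, exactly as in the proof of Lemma~\ref{d180}, and gives, modulo exact forms,
\begin{equation*}
\widetilde{\eta}_g(\mF_X,A_X)=\pi_Y^*\widetilde{\eta}_g(\mF,A)\wedge\ch_g(E',\nabla^{E'}).
\end{equation*}

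Substituting this into Theorem~\ref{d188} and applying the projection formula
\begin{equation*}
\int_{Y^g}\td_g(\nabla^{TY},\nabla^{L_Y})\wedge\pi_Y^*\widetilde{\eta}_g(\mF,A)\wedge\ch_g(E',\nabla^{E'})=\widetilde{\eta}_g(\mF,A)\wedge\mathrm{FLI}_g(\mF')
\end{equation*}
yields the desired identity with $x\in K_G^*(B)$ equal to the equivariant higher spectral flow produced by the functoriality theorem; converting from this convenient perturbation on $\mF\cup\mF'$ to the given $A'$ via Theorem~\ref{d176} modifies $x$ by another element of $K_G^*(B)$. The principal obstacle is justifying the superconnection factorisation and the factorised supertrace identity with the correct Koszul signs on the graded module $\mS_Z\widehat{\otimes}E\widehat{\otimes}E'$: once compatible horizontal distributions, metrics, and connections have been fixed at the start, this is a direct but technical extension of the computation in Lemma~\ref{d180}.
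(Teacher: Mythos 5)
Your proof takes the same route as the paper's: factor $\mF\cup\mF'$ as the fiberwise index $\pi'!(\mF_X)$, where $\mF_X$ is the pullback of $\mF$ to $W'$ twisted by $E'$, observe that the Chern--Simons correction vanishes because the vertical connection of the fiber product already is the direct-sum connection $\nabla^{TY,TX}$, and then invoke Theorem~\ref{d188}. The paper stops there, stating only ``So Theorem~\ref{d198} follows from Theorem~\ref{d188}.'' Your argument supplies the missing step: one still needs to relate $\widetilde{\eta}_g(\mF_X,A_X)$ (a form on $V^g=(W')^g$) to $\widetilde{\eta}_g(\mF,A)$ (a form on $B$), and you do this by choosing $A_X=\pi_Y^*A\,\widehat{\otimes}\,1_{E'}$, observing the decoupling of the perturbed Bismut superconnection into super-commuting odd pieces $\pi_Y^*\mathbb{B}_u'(\mF)\widehat{\otimes}1$ and $1\widehat{\otimes}\nabla^{E'}$, factorising the heat supertrace, and then applying the projection formula over $Y^g=Z'^g$. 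This is the right content, and it makes explicit what the paper leaves implicit.

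Two small things are worth tightening. First, the factorisation $\mathbb{B}_u'(\mF_X)=\pi_Y^*\mathbb{B}_u'(\mF)\widehat{\otimes}1+1\widehat{\otimes}\nabla^{E'}$ depends on choosing $T^H_{\pi_X}(W\times_B W')$ to be the tautological pullback of $T^H_\pi W$, so that both the horizontal lift, the mean curvature term $-\tfrac12\langle S(e_i)e_i,\cdot\rangle$, and the torsion $c(T)$ in (\ref{e01032}) pull back from $B$; since Theorem~\ref{d188} allows an arbitrary compatible choice up to the spectral flow term, this is permissible, but you should say it. Second, $\widetilde{\eta}_g(\mF_X,A_X)$ in Theorem~\ref{d188} is a form over $V^g$ (defined via (\ref{d075}), i.e.\ by restricting the superconnection data to the fixed locus), so your identity should be stated as $\widetilde{\eta}_g(\mF_X,A_X)=\left.\left(\pi_Y^*\widetilde{\eta}_g(\mF,A)\wedge\ch_g(E',\nabla^{E'})\right)\right|_{V^g}$; the projection formula then lands cleanly in $\Omega^*(B)$. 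Also note the paper writes the twisting bundle as $\delta^*(\mS_{Z'}\otimes E')$, but the coefficient bundle needed to recover the module of $\mF\cup\mF'$ after the $\pi'!$ construction (which already contributes $\pi_X^*\mS_Y=p_2^*\mS_{Z'}$) is $p_1^*E\widehat{\otimes}p_2^*E'$, exactly as you have it; so your version is in fact the consistent one.
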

	 \begin{proof}
	 	Here we use a trick in \cite{BS09} similarly as (\ref{d210}). Let $\pi':W'\rightarrow B$ be the submersion in $\mF'$. 
	 	We could obtain a pullback family $\pi^{'*}\mF$ by choosing a horizontal 
	 	subbundle $T^H(\pi'^*W)$ such that $d\pi'(T^H(\pi'^*W))\subset
	 	T^HW$.
	 	Let $\pi^{'*}\mF\otimes E'$ be the equivariant geometric family which is obtained from $\pi^{'*}\mF$ by twisting with $P_{W'}^*(\mS_{Z'}\otimes E')$, where $P_{W'}: W\times_B W'\rightarrow W'$.
	 	Then we have
	 	\begin{align}
	 	\mF\times_B \mF'\simeq \pi'!(\pi^{'*}\mF\otimes  E').
	 	\end{align} 
	 	Since the fibers of $\pi^{'*}W\rightarrow B$ is $Z'\times Z$, the fiberwise connection $\nabla^{T(Z'\times Z)}=\nabla^{TZ',TZ}$. So Theorem \ref{d198} follows from Theorem \ref{d188}.
	 	
	 	The proof of Theorem \ref{d198} is completed.
	 \end{proof}
	 
	 \begin{rem}\label{d114} 
	 	When the parameter space $B$ is a point and $\dim Z$ is odd, letting $A=P_{\ker D}$ be the orthogonal projection onto the kernel of $D(\mF)$, which we simply denote by $D$, we have
	 	\begin{multline}\label{bl0313} 
	 	\tilde{\eta}_g(\mF, A)
	 	=\frac{1}{\sqrt{\pi}}\int_0^{+\infty}\tr\left[g(D+(u\chi(u))'P_{\ker D})\exp(-(uD+u\chi(u)P_{\ker D})^2)\right]du
	 	\\
	 	=\frac{1}{\sqrt{\pi}}\int_0^{+\infty}\tr\left[g(D+(u\chi(u))'P_{\ker D})\exp(-u^2D-u^2\chi(u)^2P_{\ker D})\right]du
	 	\\
	 	=\frac{1}{\sqrt{\pi}}\int_0^{+\infty}\tr\left[gD\exp(-u^2D^2)\right]du+\frac{1}{\sqrt{\pi}}\int_0^{+\infty}\tr\left[g(u\chi(u))'P_{\ker D}\exp(-u^2\chi(u)^2P_{\ker D})\right]du
	 	\\
	 	=\frac{1}{2\sqrt{\pi}}\int_0^{+\infty}u^{-1/2}\tr[gD\exp(-uD^{2})]du+\frac{1}{\sqrt{\pi}}\int_0^{+\infty}\exp(-u^2)du\cdot\tr[gP_{\ker D}]
	 	\\
	 	=\frac{1}{2\sqrt{\pi}}\int_0^{+\infty}u^{-1/2}\tr[gD\exp(-uD^{2})]du+\frac{1}{2}\tr[gP_{\ker D}],
	 	\end{multline}
	 	which is just the usual \textbf{equivariant reduced eta invariant} in \cite{Do78}. So Theorem \ref{d188} naturally degenerates to the case of equivariant reduced eta invariants and the equivariant higher spectral flow degenerates to the canonical equivariant spectral flow \cite{F05}. 
	 \end{rem}
		
	\subsection{Proof of Lemma \ref{d030}}\label{s0206}
		
	The proof of Lemma \ref{d030} is almost the same as the proof of \cite[Theorem 3.4]{Liu17}.
	Observe that Assumptions 3.1 and 3.3 in \cite{Liu17}
	naturally hold in our case.
	
	Let $T'\geq 1$ be the constant taking in the proof of Lemma \ref{d089}.
	For $T\geq T'$, let $\mathbb{B}_{u,T}$ be the Bismut superconnection associated with the equivariant geometric family $\mF_{Z,T}$.
	Let
	\begin{align}\label{d074}
	\widehat{\mathbb{B}}|_{(T,u)}=\mathbb{B}_{u^2, T}+uT\chi(uT)(1\widehat{\otimes}A_X)
	+dT\wedge\frac{\partial}{\partial T}+du\wedge\frac{\partial}{\partial u}.
	\end{align}
	We define $\beta_g=du\wedge \beta_g^u+dT\wedge \beta_g^T$ to be the part of
	$\psi_{B}\widetilde{\tr}[g\exp(-\widehat{\mathbb{B}}^2)]$
	of degree one with respect to the coordinates $(T,u)$, with functions $\beta_g^u$, $\beta_g^T: \mathbb{R}_{+,T}\times\mathbb{R}_{+,u}
	\rightarrow \Omega^*(B,\C)$.
	
	Comparing with \cite[Proposition 4.2]{Liu17},
	there exists a smooth family $\alpha_g:\mathbb{R}_{+,T}\times\mathbb{R}_{+,u}\rightarrow\Omega^*(B,\C)$ such that
	\begin{align}\label{e03004}
	\left(du\wedge\frac{\partial}{\partial u}+dT\wedge\frac{\partial}{\partial T}\right)\beta_g=dT\wedge du\wedge d^B\alpha_g.
	\end{align}
	
	Take $\var, A, T_0$, $0<\var\leq 1\leq A<\infty$, $T'\leq T_0<\infty$. Let $\Gamma=\Gamma_{\var,A,T_0}$ be the oriented contour in
	$\mathbb{R}_{+,T}\times\mathbb{R}_{+,u}$.
	
	\
	
	\begin{center}\label{e03010}
		\begin{tikzpicture}
		\draw[->][ -triangle 45] (-0.25,0) -- (5.5,0);
		\draw[->][ -triangle 45] (0,-0.25) -- (0,3.5);
		\draw[->][ -triangle 45] (1,0.5) -- (2.5,0.5);
		\draw (2.5,0.5) -- (4,0.5);
		\draw[->][ -triangle 45] (1,3) -- (1,1.5);
		\draw (1,1.5) -- (1,0.5);
		\draw[->][ -triangle 45] (4,0.5) -- (4,2);
		\draw (4,2) -- (4,3);
		\draw[->][ -triangle 45] (4,3) -- (2.5,3);
		\draw (2.5,3) -- (1,3);
		\draw[dashed] (0,0.5) -- (1,0.5);
		\draw[dashed] (0,3) -- (1,3);
		\draw[dashed] (1,0) -- (1,0.5);
		\draw[dashed] (4,0) -- (4,0.5);
		\foreach \x in {0}
		\draw (\x cm,1pt) -- (\x cm,1pt) node[anchor=north east] {$\x$};
		\draw
		(2.5,1.75)  node {$\mU$}(2.5,1.75);
		\draw
		(0,3.5)  node[anchor=west] {$u$}(0,3.5);
		\draw
		(5.5,0)  node[anchor=west] {$T$}(5.5,0);
		\draw
		(0,0.5)  node[anchor=east] {$\varepsilon$}(0,0.5);
		\draw
		(0,3)  node[anchor=east] {$A$}(0,3);
		\draw
		(1,0)  node[anchor=north] {$T'$}(1,0);
		\draw
		(4,0)  node[anchor=north] {$T_0$}(4,0);
		\draw
		(4,1.75)  node[anchor=west] {\small{$\Gamma_1$}}(4,1.75);
		\draw
		(2.5,0.5)  node[anchor=north] {\small{$\Gamma_4$}}(2.5,0.5);
		\draw
		(2.5,3)  node[anchor=south] {\small{$\Gamma_2$}}(2.5,3);
		\draw
		(1,1.75)  node[anchor=east] {\small{$\Gamma_3$}}(1,1.75);
		\draw
		(4,3)  node[anchor=south west] {\small{$\Gamma$}}(4,3);
		\end{tikzpicture}
	\end{center}
	
	\
	
	The contour $\Gamma$ is made of four oriented pieces $\Gamma_1,\cdots,\Gamma_4$ indicated in the above picture.
	For $1\leq k\leq 4$,
	set $I_k^0=\int_{\Gamma_k}\beta_g$. Then by Stocks' formula and (\ref{e03004}),
	\begin{align}\label{e03011}
	\sum_{k=1}^4I_k^0=\int_{\partial \mU}\beta_g=\int_{\mU}\left(du\wedge\frac{\partial}{\partial u}
	+dT\wedge\frac{\partial}{\partial T}\right)\beta_g=d^B\left(\int_\mU \alpha_g dT\wedge du\right).
	\end{align}
	
	The following theorems are the analogues of  \cite[Theorems 4.3-4.6]{Liu17}. 	Note that Theorem \ref{e03018} is the analogue of \cite[(6.8)]{Liu17}.
	We will sketch the proofs in the next subsection.
	
	\begin{thm}\label{e03009}
		i) For any $u>0$, we have %There exist $C>0$, $T_0\geq 1$, $\delta>0$ such that for any $u>0$, $T\geq T_0$, we have
		\begin{align}\label{e03015}
		\lim_{T\rightarrow \infty}\beta_g^u(T,u)=0.
		\end{align}
		
		ii) For $0<u_1<u_2$ fixed, there exists $C>0$ such that, for $u\in [u_1,u_2]$, $T\geq 1$, we have
		\begin{align}\label{e03016}
		|\beta_g^u(T,u)|\leq C.
		\end{align}
		
		iii) We have the following identity:
		\begin{align}\label{e03017}
		\lim_{T\rightarrow +\infty} \int_{1}^{\infty}\beta_g^u(T,u)du=0.
		\end{align}
	\end{thm}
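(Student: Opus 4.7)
The plan is to adapt the adiabatic-limit argument of \cite[Theorem 3.3--3.6]{Liuetaform} to our perturbed setting, with the key modification being that $1\widehat{\otimes}A_X$ replaces a smooth fiberwise operator and that we work on the full product contour rather than taking a strict $T\to\infty$ limit inside the eta form. First, I would expand $\psi_B\widetilde{\tr}[g\exp(-\widehat{\mathbb{B}}^2)]$ by extracting explicitly the $du$-coefficient in the two-parameter Duhamel expansion of the heat kernel, obtaining for $\beta_g^u(T,u)$ an expression involving $\tfrac{\partial}{\partial u}(\mathbb{B}_{u^2,T} + uT\chi(uT)(1\widehat{\otimes}A_X))$ inserted under the trace. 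This separates the contribution of the Bismut superconnection and the contribution coming from the perturbation term $uT\chi(uT)(1\widehat{\otimes}A_X)$, both of which must be controlled uniformly in $T$.

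For part (i), I would use the rescaling $\mathcal{G}_T$ from (\ref{e02027})--(\ref{e02028}) to convert $\mathbb{B}_{u^2,T}$ into an operator where the $X$-fiber Dirac operator effectively scales like $T$. Combined with the invertibility of $D(\mF_X)+A_X$ established in the proof of Lemma \ref{d089}, the operator $D(\mF_{Z,T})+1\widehat{\otimes}TA_X$ has a spectral gap growing like $T$ along the $X$-direction. Following the finite-propagation-speed and local-index scheme of \cite[\S3]{Liuetaform}, one localizes the heat kernel near $W^g$ and the remaining transverse $X$-heat kernel becomes a spectral projection onto a vanishing subspace, forcing $\beta_g^u(T,u)\to 0$ for each fixed $u>0$.

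For part (ii), uniformity on a compact $u$-interval comes from standard semigroup estimates: since $uT\chi(uT)$ is bounded whenever $uT$ stays in a fixed range and exceeds it only when $\chi(uT)=1$, the perturbation term can be absorbed via a Duhamel bound by $Ce^{C(u_2-u_1)}$, and the $T$-derivative terms entering $\beta_g^u$ can be estimated in Getzler-rescaled coordinates. For part (iii), the large-$u$ decay is the critical input: since $D(\mF_{Z,T})+1\widehat{\otimes}TA_X$ is an invertible equivariant $B$-family for $T\geq T'$, one obtains uniform Gaussian-type estimates $|\beta_g^u(T,u)|\leq C e^{-cu^2}$ for $u\geq 1$ (the constants $c,C$ depending on $T'$ but not on $T\geq T'$), whence the integral over $[1,\infty)$ is dominated uniformly, and combining with part (i) via dominated convergence yields (\ref{e03017}).

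The main obstacle is part (ii)--(iii) in the regime where $u$ is small relative to $T$ (so $\chi(uT)=0$ then $\chi(uT)\to 1$), because there the perturbation $A_X$ is being turned on exactly when the adiabatic scaling forces concentration of the heat kernel. One must show that the Duhamel-type terms coupling the Bismut superconnection of the base $\mF_Y$-structure with $1\widehat{\otimes}A_X$ produce only $O(T^{-1})$ contributions to $\beta_g^u$; this is delicate since $1\widehat{\otimes}A_X$ is merely bounded (not smoothing) along $Z$, so one has to exploit that $A_X$ is smoothing along $X$ and that the $X$-heat kernel at inverse-temperature $u^2T^2$ is rapidly decaying in the $X$-direction, letting us commute $1\widehat{\otimes}A_X$ past the $Y$-part with controllable error.
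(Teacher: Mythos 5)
Your overall strategy matches the paper's: the paper's proof of Theorem~\ref{e03009} is essentially a citation of Theorems 3.3--3.6 in \cite{Liuetaform} together with the observation that $\ker(D(\mF_X)+A_X)=0$ forces the index bundle (and hence the small-eigenvalue subspace) to vanish, which simplifies the argument. You correctly identify this key input and the spectral-gap mechanism for part (i). However, there are two concrete problems with your execution.

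First, the Duhamel-absorption reasoning in part (ii) is wrong. You claim that ``$uT\chi(uT)$ is bounded whenever $uT$ stays in a fixed range and exceeds it only when $\chi(uT)=1$,'' suggesting the perturbation term can be absorbed as a small perturbation. But when $\chi(uT)=1$, the coefficient $uT\chi(uT)=uT$ grows without bound as $T\to\infty$, so $uT\chi(uT)(1\widehat{\otimes}A_X)$ is \emph{not} a small perturbation, and the naive Duhamel series does not give a uniform bound. The correct route is to treat $D(\mF_{Z,T})+T\chi(uT)(1\widehat{\otimes}A_X)$ as the zeroth-order operator in $\widehat{\mathbb{B}}$ and use that, for $T\geq T'$, the spectral gap of this operator is bounded below uniformly (indeed it grows like $T$, precisely because $D(\mF_X)+A_X$ is invertible); for the remaining compact parameter range $T\in[1,T']$, $u\in[u_1,u_2]$ one uses continuity. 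The large perturbation is the \emph{mechanism} that makes the bound work, not an error term.

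Second, the ``main obstacle'' paragraph is misdiagnosed. For parts (ii) and (iii), $u$ is bounded below ($u\geq u_1>0$, or $u\geq 1$). Since $\chi(uT)\in(0,1)$ only when $uT\in(1,2)$, that transition occurs only on a compact subset of the $(T,u)$-region under consideration ($u\geq u_1$, $T\geq 1$ implies $uT\in(1,2)$ only for $T\leq 2/u_1$), and therefore plays no role in the asymptotic analysis of $\beta_g^u$ as $T\to\infty$. The subtle coupling of the $\chi$-cutoff with adiabatic concentration that you describe is a genuine concern, but for the $\varepsilon\to 0$ estimates of $\beta_g^T$ in Theorem~\ref{e03022}, not for this theorem. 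Putting the concern here obscures which estimate actually requires care and may lead you to over-engineer (ii)--(iii) while under-preparing for Theorem~\ref{e03022}.
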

	
	\begin{thm}\label{e03018}
		For $u_0>0$ fixed, there exist $C, C'>0$, $T_0\geq 1$,
		such that for $u\geq u_0$, $T\geq T_0$, 
		\begin{align}\label{e03100}
		\left|\beta_g^T(T,u)\right|\leq C\exp(-C'u^2).
		\end{align}
	\end{thm}
	
	We know that $\mathrm{\widehat{A}}_g(TZ, \nabla)$ only depends on $g\in G$ and
	$R:=\nabla^2$.
	So we also denote it by $\mathrm{\widehat{A}}_g(R)$. Let
	$R_T^{TZ}:=(\nabla_T^{TZ})^2$.
	Set
	\begin{align}\label{e03005}
	\gamma_{\Omega}(T)=-\left.\frac{\partial}{\partial b}\right|_{b=0}\mathrm{\widehat{A}}_g\left(R_T^{TZ}
	+b\frac{\partial \nabla_T^{TZ}}
	{\partial T}\right).
	\end{align}
	By a standard argument in Chern-Weil theory, we know that
	\begin{align}\label{e03008}
	\frac{\partial}{\partial T}\widetilde{\mathrm{\widehat{A}}}_g(TZ, \nabla_{T'}^{TZ},
	\nabla^{TZ}_T)=-\gamma_{\Omega}(T).
	\end{align}
	
	\begin{thm}\label{e03047}
		When $T\rightarrow +\infty$, we have $\gamma_{\Omega}(T)=O(T^{-2})$. Moreover, modulo exact forms on $W^g$, we have
		\begin{align}\label{e03048}
		\widetilde{\mathrm{\widehat{A}}}_g(TZ, \nabla_{T'}^{TZ},
		\nabla^{TY,TX})=-\int_{T'}^{+\infty}\gamma_{\Omega}(T)dT.
		\end{align}
	\end{thm}
	
	Let
	$\mathbb{B}_{X,T}$ be the Bismut superconnection associated with the equivariant geometric family $\mF_{X,T}$, which 
	is the same as $\mF_X$ except for replacing $g^{TX}$ to $T^{-2}g^{TX}$.
	Set
	\begin{align}\label{d075}
	\gamma_1(T)=\left\{\psi_{V^g}\widetilde{\tr}|_{V^g}\left[g\exp\left(-\left(\mathbb{B}_{X,T^2}|_{V^g}+
	T\,\chi(T)A_X|_{V^g}+dT\wedge\frac{\partial }{\partial T}\right)^2\right)\right]\right\}^{dT}.
	\end{align}
	Then from (\ref{i18}),
	\begin{align}\label{d076}
	\widetilde{\eta}_g(\mF_X, A_X)=-\int_0^{\infty}\gamma_1(T) dT.
	\end{align}
	
	\begin{thm}\label{e03022}
		i) For any $u>0$, there exist $C>0$ and $\delta>0$ such that, for $T\geq T'$, we have
		\begin{align}\label{e03023}
		|\beta_g^T(T,u)|\leq\frac{C}{T^{1+\delta}}.
		\end{align}
		
		ii) For any $T>0$, we have
		\begin{align}\label{e03024}
		\lim_{\var\rightarrow 0}\var^{-1}\beta_g^T(T\var^{-1},\var)=
		\int_{Y^g}\td_g(\nabla^{TY}, \nabla^{L_Y})\, \gamma_1(T).
		\end{align}
		
		iii) There exists $C>0$ such that for $\var\in (0,1/T']$, $\var T' \leq T\leq 1$,
		\begin{align}\label{e03025}
		\var^{-1}\left|\beta_g^T(T\var^{-1},\var)
		+ \int_{Z^g}\gamma_{\Omega}(T\var^{-1})\,
		\ch_g(L_Y^{1/2}, \nabla^{L_Y^{1/2}})\,
		\ch_g(E, \nabla^{E})\right|
		\leq C.
		\end{align}
		Note that as in (\ref{bl0659}), $\ch_g(L_Y^{1/2}, \nabla^{L_Y^{1/2}})$ is well-defined even if $L_Y^{1/2}$
		does not exist.
		
		iv) There exist $\delta\in (0,1]$,  $C>0$ such that, for $\var\in (0,1]$, $T\geq 1$,
		\begin{align}\label{e03026}
		\var^{-1}|\beta_g^T(T\var^{-1},\var)|\leq \frac{C}{T^{1+\delta}}.
		\end{align}
	\end{thm}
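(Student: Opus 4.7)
The plan is to mimic, in equivariant form, the four-regime analysis of Theorems 3.3--3.6 of \cite{Liuetaform}, using the adiabatic rescaling built into $\mF_{Z,T}$ in (\ref{d028}) together with the large-$T$ invertibility of $D(\mF_{Z,T})+1\widehat{\otimes}TA_X$ guaranteed by Lemma \ref{d089}. Write the rescaled superconnection in Getzler-type variables adapted to the splitting $TZ=T^H_{\pi_X}Z\oplus TX\cong \pi_X^*TY\oplus TX$; then $\mathbb{B}_{u^2,T}+uT\chi(uT)(1\widehat{\otimes}A_X)$ decomposes into a $Y$-piece (whose $T$-derivative produces the $\gamma_\Omega(T)$ terms, cf.\ (\ref{e03005}), (\ref{e03048})) and an $X$-piece (producing $\gamma_1(T)$ after fiber integration, cf.\ (\ref{d075})). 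The key point is that the perturbation $1\widehat{\otimes}TA_X$, while not smooth along the $Z$-fibers, is bounded and behaves like the $X$-fibered perturbation $TA_X$ after localising to a neighbourhood of $W^g$, so the local index technique of \cite{Liuetaform} and the heat kernel estimates there carry over verbatim with the extra factor $g$ inserted in the trace. Throughout one uses that $G$ acts trivially on $B$, so that all operators commute with $g$ and the Lefschetz-type localisation to $W^g$, $V^g$, $Z^g$, $Y^g$ works as in Theorem \ref{d069}.

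For part (i), large $T$: restrict attention to a $1/\sqrt{T}$-tubular neighbourhood of $W^g$ (standard off-diagonal exponential decay handles the complement). On this neighbourhood the $X$-vertical part of $(\mathbb{B}_{u^2,T}+uT\chi(uT)(1\widehat{\otimes}A_X))^2$ dominates with a uniform spectral gap of order $T^2$, because $D(\mF_X)+A_X$ is invertible and the rescaling (\ref{d035}) multiplies the $X$-part by $T^2$; a Duhamel expansion along the horizontal $Y$-direction then gives the $O(T^{-1-\delta})$ bound on the $dT$-component $\beta_g^T$. For part (ii), the double scaling $(T\var^{-1},\var)$ is exactly the adiabatic limit: the metric on $TX$ is effectively of order $1$ while the metric on $T^H_{\pi_X}Z$ becomes large, so $(\widehat{\mathbb{B}})^2$ at $(T/\var,\var)$ factorises, to leading order in $\var$, as the square of the Bismut superconnection $\mathbb{B}_{X,T^2}+T\chi(T)A_X$ on $\mF_X$ coupled with the Bismut superconnection on the $Y$-fibration with coefficients in the infinite-dimensional bundle of $X$-sections. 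Applying the local family index theorem (Theorem \ref{d069}) fibrewise in $Y$ collapses the $Y$-direction to $\td_g(\nabla^{TY},\nabla^{L_Y})$ on $Y^g$ and leaves $\gamma_1(T)$ from the $X$-direction, yielding (\ref{e03024}).

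Parts (iii) and (iv) are the uniform companions that make the contour integral (\ref{e03011}) converge. For (iii), in the short-time window $\var T'\le T\le 1$, the standard Getzler rescaling of the full $Z$-Bismut superconnection identifies the leading $\var^{-1}$-singularity as $\int_{Z^g}\gamma_\Omega(T\var^{-1})\wedge \ch_g(L_Y^{1/2},\nabla^{L_Y^{1/2}})\wedge\ch_g(E,\nabla^E)$, because differentiation in $T$ of the rescaled curvature reproduces precisely (\ref{e03005}) on the $Y$-factor; the remainder is $O(1)$ uniformly by the standard pointwise heat-kernel asymptotics. Part (iv) then follows from (i) and (iii) by an interpolation between the two regimes, using $\gamma_\Omega(T/\var)=O((T/\var)^{-2})$ from Proposition \ref{e03047} to control the subtracted term. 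The main obstacle, as in \cite{Liuetaform}, is the simultaneous-limit statement (ii): one must keep enough uniformity in $\var$ in the Getzler rescaling of the two-parameter superconnection so that the $Y$-direction collapses to characteristic forms while the $X$-direction remains a finite-scale Bismut superconnection with the bounded, non-smooth perturbation $TA_X$; all other parts reduce, after localisation to $W^g$, to Duhamel estimates already present in \cite{Liuetaform}.
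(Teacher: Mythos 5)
Your proposal follows essentially the same strategy as the paper: exploit $\ker(D(\mF_X)+A_X)=0$ to obtain the large-$T$ spectral gap for (i)/(iv), and for the adiabatic limit (ii) localize near $\pi_X^{-1}(V^g)$ and perform the Getzler-type rescaling in the $Y$-normal directions, which produces $\td_g(\nabla^{TY},\nabla^{L_Y})$ on $Y^g$ together with the $\mF_X$-superconnection term $\gamma_1(T)$. The paper realizes this concretely via the rescaled operators $L^1_{\var,T},L^2_{\var,T},L^3_{\var,T}$ and the limit (\ref{e594}), and otherwise invokes Theorems 3.3--3.6 of \cite{Liuetaform} with $D^X$ and $D^Z_T$ replaced by $D(\mF_X)+A_X$ and $D(\mF_{Z,T})+1\widehat{\otimes}TA_X$, the same substitution you describe.
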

		
	Now we prove Lemma \ref{d030} using the theorems above.
	
	By (\ref{e03011}), we know that
	\begin{align}\label{e03027}
	\int_{\var}^{A}\beta_g^u(T_0,u)du-\int_{T'}^{T_0}\beta_g^T(T,A)dT-\int_{\var}^{A}\beta_g^u(T',u)du+
	\int_{T'}^{T_0}\beta_g^T(T,\var)dT
	=\sum_{k=0}^4I_k^0
	\end{align}
	is an exact form.
	We take the limits $A\rightarrow+\infty$, $T_0\rightarrow+\infty$ and then $\var\rightarrow 0$
	in the indicated order. Let $I_j^k$, $j=1,2,3,4$, $k=1,2,3$, denote the value of the part $I_j$ after
	the $k$th limit. 
	
	Since the definition of the equivariant eta form does not depend on the cut-off function,
	from (\ref{i17}), we obtain that modulo exact forms on $B$,
	\begin{align}\label{e03029}
	I_3^3=\tilde{\eta}_{g}(\mF_{Z, T'}, 1\widehat{\otimes}T'A_X).
	\end{align}
	Furthermore, by Theorem \ref{e03018}, we get
	\begin{align}\label{e03030}
	I_2^3=I_2^2=0.
	\end{align}
	From Theorem \ref{e03009}, we have
	\begin{align}\label{e03031}
	I_1^3
	=0.
	\end{align}
	
	Finally, using Theorem \ref{e03022}, we get
	\begin{align}\label{e03032}
	I_4^3=-\int_{Y^g}\td_g(\nabla^{TY}, \nabla^{L_Y})\, \widetilde{\eta}_g(\mF_{X},A_X)
	+
	\widetilde{\mathrm{FLI}}_g\left(\nabla_{T'}^{TZ}, \nabla^{L_Z}, \nabla^{TY,TX}, \nabla^{L_Z}\right)
	\end{align}
	as follows: We write
	\begin{align}\label{e03033}
	\begin{split}
	\int_{T'}^{+\infty}\beta_g^T(T,\var)dT
	=\int_{\var T'}^{+\infty}\var^{-1}\beta_g^T(T\var^{-1},\var)dT.
	\end{split}
	\end{align}
	Convergence of the integrals above is guaranteed by (\ref{e03023}).
	Using Theorem \ref{e03047} and (\ref{e03024})-(\ref{e03026}), we get
	\begin{align}\label{e03034}
	\lim_{\var\rightarrow 0}\int_1^{+\infty}\var^{-1}\beta_g^T(T\var^{-1},\var)dT
	=\int_{Y^g}\td_g(\nabla^{TY}, \nabla^{L_Y})\,
	\int_1^{+\infty}\gamma_1(T)dT
	\end{align}
	and
	\begin{multline}\label{e03035}
	\lim_{\var\rightarrow 0}\int_{\var T'}^1\var^{-1}\left[\beta_g^T(T\var^{-1},\var)dT
	+\int_{Z^g}\gamma_{\Omega}(T\var^{-1})\,
	\ch_g(L_Y^{1/2}, \nabla^{L_Y^{1/2}})\,
	\ch_g(E, \nabla^{E})\right]dT
	\\
	=\int_{Y^g}\td_g(\nabla^{TY}, \nabla^{L_Y})\,
	\int_0^{1}\gamma_1(T)dT.
	\end{multline}
	The remaining part of the integral yields by (\ref{e03025})
	\begin{multline}\label{e03037}
	\lim_{\var\rightarrow 0}\int_{\var T'}^1\var^{-1}\int_{Z^g}\gamma_{\Omega}(T\var^{-1})\,
	\ch_g(L_Y^{1/2}, \nabla^{L_Y^{1/2}})\,
	\ch_g(E, \nabla^{E})dT
	\\
	=\int_{Z^g}\int_{T'}^{+\infty}\gamma_{\Omega}(T)\,
	\ch_g(L_Y^{1/2}, \nabla^{L_Y^{1/2}})
	\ch_g(E, \nabla^{E})dT
	=\widetilde{\mathrm{FLI}}_g\left(\nabla^{TY,TX}, \nabla^{L_Z}, \nabla_{T'}^{TZ}, \nabla^{L_Z}\right).
	\end{multline}
	These four equations for $I_k^3$, $k=1,2,3,4$, (\ref{e03011}) and  (\ref{e03027}) imply that
	\begin{align}
\lim_{\var\to 0}\lim_{T_0\to+\infty}\lim_{A\to +\infty}d^B\left( \int_{T'}^{T_0}\int_{\var}^A\alpha_g dT\wedge du\right)
	\end{align}
	exists and equal to
	\begin{multline}\label{eq:2.131}
	\Theta:=\widetilde{\eta}_g(\mF_{Z, T'}, 1\widehat{\otimes}TA_{X})
	-\int_{Y^g}\td_g(\nabla^{TY}, \nabla^{L_Y})\, \widetilde{\eta}_g(\mF_{X},A_X)
	\\-
	\widetilde{\mathrm{FLI}}_g\left(\nabla^{TY,TX}, \nabla^{L_Z}, \nabla_{T'}^{TZ}, \nabla^{L_Z} \right)\in \Omega^*(B).
	\end{multline}
	%which we simply denote by $\Theta\in \Omega^*(B)$.
	
	Since the convergences for $A\rightarrow+\infty$, $T_0\rightarrow+\infty$ and $\var\rightarrow 0$
	are uniform on compact manifold $B$, they commute with
	the integration on $B$. So
	for any closed form $\theta\in \Omega^*(B)$,
	$\int_B\Theta\wedge\theta=0$.
	By \cite[\S 22, Theorem 17']{dR73}, there exists a
	current $\mT$ such that $\Theta=d\mT$. Since
	$\Theta\in \Omega^*(B)$ is smooth,
	we have 
	$\Theta\in d\Omega^*(B)$. 
	So the right hand side of (\ref{eq:2.131}) is an exact form on $B$.  Therefore we obtain Lemma \ref{d030}.
	
%	$d\Omega(B)$ is closed under uniformly
%	convergence on  $B$. Thus,
%	\begin{align}\label{e03006}
%	\sum_{j=1}^4I_j^3\equiv0\ \mathrm{mod}\ d\,\Omega^*(B, \C).
%	\end{align}
	
	\subsection{Proofs of Theorems \ref{e03009}-\ref{e03022}}\label{s0207}
		
	Since
	$\ker (D(\mF_X)+A_X)=0$, the proofs of Theorems \ref{e03009}-\ref{e03022} in our case are much easier than those in \cite{Liu17}.
	We only need to replace $D^X$ and $D_T^Z$ somewhere in \cite{Liu17} by $D(\mF_X)+A_X$ and $D(\mF_{Z,T})+1\widehat{\otimes}TA_X$ and take care with the local index computation in the proof of Theorem \ref{e03022} ii).
%	Note that Theorem \ref{e03018} is the analogue of \cite[(6.8)]{Liu17}.
	 In this subsection, we only sketch the local index part here. 
	
	Set (cf. \cite[(7.1)]{Liu17})
	\begin{align}\label{e06004}
	\mB_{\var, T/\var}'%=\var^2 \delta_{\var^2}\mB_{T/\var}'\delta_{\var^2}^{-1}
	=
	(\mathbb{B}_{\var^2,T/\var}+T\chi(T)A_X)^2+\var^{-1}dT\wedge \left.\frac{\partial (\mathbb{B}_{\varepsilon^2,T'}+\var T'\chi(\var T')A_X)}{\partial T'}\right|_{T'=T\varepsilon^{-1}}.
	\end{align}
	By the definition of $\beta_g^T(T,\var)$, we have
	\begin{align}\label{e06005}
	\var^{-1}\beta_g^T(T/\var,\var)=\left\{\psi_B\widetilde{\tr}[g\exp(-\mB_{\var, T/\var}')]\right\}^{dT}.
	\end{align}
	Let $S_X$ be the tensor in (\ref{e01017}) with respect to $\pi_X$.
	Let $\{e_i\}$, $\{f_p\}$ and $\{g_{\alpha}\}$ be the local orthonormal frames of $TX$, $TY$ and $TB$ and $\{f_{p,1}^H\}$ and $\{g_{\alpha,3}^H\}$ be the corresponding horizontal lifts.
	Precisely, by (\ref{e01032}), we have
	\begin{multline}\label{e06002}
	\var^{-1} \left.\frac{\partial (\mathbb{B}_{\varepsilon^2,T'}+\var T'\chi(\var T')A_X)}{\partial T'}\right|_{T'=T\varepsilon^{-1}}
	=D^X+\chi(T)A_X+T\chi'(T)A_X
	\\
	-\frac{1}{8T^2}\big(\la \var^2[f_{p,1}^H, f_{q,1}^H],e_i\ra c(e_i)c(f_{p,1}^H)c(f_{q,1}^H)
	\\
	+4\var \la S_X(g_{\alpha,3}^H)e_i, f_{p,1}^H \ra c(e_i)c(f_{p,1}^H) g^{\alpha}_3\wedge
	+\la [g_{\alpha,3}^H, g_{\beta,3}^H],e_i\ra c(e_i)g^{\alpha}\wedge g^{\beta}\wedge\big).
	\end{multline}
	
	As in (\ref{e06004}), we set
	\begin{align}\label{e05013}
	\mB_{T^2}''|_{V^g}=(\mathbb{B}_{X,T^2}|_{V^g}+ T\chi(T)A_X)^2+dT\wedge \left.\frac{\partial (\mathbb{B}_{X,T^2}+ T\chi(T)A_X)}{\partial T}\right|_{V^g}.
	\end{align}
	Then by (\ref{d075}), we have
	\begin{align}\label{e06051}
	\gamma_1(T)=\left\{\psi_{V^g}\widetilde{\tr}[g\exp(-\mB_{ T^2}''|_{V^g})]\right\}^{dT}.
	\end{align}
	
	As the same process in \cite[Section 7]{Liu17}, we could localize the problem near $\pi_X^{-1}(V^g)$ and define the operator $\mB_{\var,T/\var}'$ on a neighborhood of $\{0\}\times X_{y_0}$ in $T_{y_0}Y\times X_{y_0}$.
	
	Let $\mathrm{dist}^V$, $\mathrm{dist}^W$ be the distance functions on $V$, $W$ associated with $g^{TV}$, $g^{TW}$.
	Let $\mathrm{Inj}^{V}$, $\mathrm{Inj}^{W}$ be the injective radius of $V$, $W$.
	In the sequel, we assume that given $0<\alpha<\alpha_0<\inf\{\mathrm{Inj}^V,\mathrm{Inj}^W\}$
	are chosen small enough so that if $y\in V$, $\mathrm{dist}^V(g^{-1}y,y)\leq \alpha$,
	then $\mathrm{dist}^V(y,V^g)\leq\frac{1}{4}\alpha_0$, and if $z\in W$, $\mathrm{dist}^W(g^{-1}z,z)\leq \alpha$,
	then $\mathrm{dist}^W(z,W^g)\leq\frac{1}{4}\alpha_0$.
	Let $\rho:T_{y_0}Y\rightarrow [0,1]$ be a smooth function such that
	\begin{align}\label{e06017}
	\rho(U)=
	\left\{
	\begin{array}{ll}
	1, & \hbox{$|U|\leq \alpha_0/4$;} \\
	0, & \hbox{$|U|\geq \alpha_0/2$.}
	\end{array}
	\right.
	\end{align}
	Let $\Delta^{TY}$ be the ordinary Laplacian operator on $T_{y_0}Y$. Let $\cE_{Z,y_0}:=\cC^{\infty}(X_{y_0}, \mS_Z\widehat{\otimes}E|_{X_{y_0}})$.
	
	Set
	\begin{align}\label{e06019}
	L_{\var,T}^1=(1-\rho^2(U))(-\var^2\Delta^{TY}+T^2(D^X+A_X)_{y_0}^{2})+\rho^2(U)\mB_{\var,T/\var}'
	\end{align}
	on $\pi_2^*\Lambda(T^*S)\widehat{\otimes}\cC^{\infty}(T_{y_0}Y,
	\cE_{Z,y_0})$.
	For $(U,x)\in N_{Y^g/Y, y_0}\times X_{y_0}$, $|U|<\alpha_0/4$, $\var>0$, set
	\begin{align}\label{e566}
	(S_{\var}s)(U, x)=s\left(U/\var,x\right).
	\end{align}
	Put
	\begin{align}\label{e567}
	\begin{split}
	L_{\var,T}^2:=S_{\var}^{-1}L_{\var,T}^1S_{\var}.
	\end{split}
	\end{align}
	
	Set $\dim T_{y_0}Y^g=l'$ and $\dim N_{Y^g/Y,y_0}=2l''$.
	Let $\{f_1,\cdots,f_{l'}\}$ be an orthonormal basis of $T_{y_0}Y^g$ and let $\{f_{l'+1},\cdots,f_{l'+2l''}\}$ be an orthonormal basis
	of $N_{Y^g/Y, y_0}$.
%	For $\alpha\in\C(f^p\wedge\, i_{f_p})_{1\leq p\leq l'}$, let $[\alpha]^{max}\in\C$ be the coefficient of $f^1\wedge\cdots\wedge f^{l'}$
%	in the expansion of $\alpha$. 
	Let $R_{\var}$ be a rescaling operator such that
	\begin{align}\label{e568}
	\begin{split}
	&R_{\var}(c(e_i))=c(e_i),
	\\
	&R_{\var}(c(f_{p,1}^H))=\frac{f^{p}\wedge}{\var}-\var\, i_{f_{p}}, \quad {\rm for}\ 1\leq p\leq l',
	\\
	&R_{\var}(c(f_{p,1}^H))=c(f_{p,1}^H), \quad {\rm for}\ l'+1\leq p\leq l'+2l''.
	\end{split}
	\end{align}
	Then $R_{\var}$ is a Clifford algebra homomorphism. Set
	\begin{align}\label{e569}
	L_{\var,T}^3=R_{\var}(L_{\var,T}^2)
	\end{align}
	on $\pi_2^*\Lambda(T^*S)\widehat{\otimes} \Lambda(T_{y_0}^*Y^g)
	\widehat{\otimes}\cC^{\infty}(T_{y_0}Y, \cE_{X,N,y_0})$,
	where $\cE_{X,N,y_0}:=\cC^{\infty}(X_{y_0}, \pi_2^*\mS_N\widehat{\otimes}\mS_X\widehat{\otimes}E|_{X_{y_0}})$
	and $\mS_N$ is the spinor for $N_{Y^g/Y, y_0}$.
	
	Corresponding to \cite[Lemma 4.4]{Liu17}, from (\ref{e06005})-(\ref{e05013}), we have
	\begin{lemma}
		When $\var\rightarrow 0$, the limit $\displaystyle L_{0,T}^3=\lim_{\var\rightarrow 0}L_{\var,T}^3$ exists
		in the sense of \cite[(7.108)]{Liu17}
		 and
		\begin{align}\label{e594}
		L_{0,T}^3|_{V^g}=-\left(\partial_{p}+\frac{1}{4}\la R^{TY}|_{V^g}U,f_{p,1}^H\ra\right)^2+\frac{1}{2}R^{L_Y}|_{V^g}+\mB_{T^2}''|_{V^g}.
		\end{align}
	\end{lemma}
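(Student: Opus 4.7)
The plan is to carry out a Getzler-type rescaling on the heat operator $\exp(-L_{\var,T}^3)$ adapted to the adiabatic limit associated with the double fibration $W\to V\to B$. The key observation is that $L_{\var,T}^3$ is obtained from $\mB_{\var,T/\var}'$ by three independent modifications: localization by the cut-off $\rho^2$ near $\pi_X^{-1}(V^g)$, the rescaling $S_\var$ in the normal coordinate $U\in N_{Y^g/Y,y_0}$, and the Clifford rescaling $R_\var$ on generators tangent to $T_{y_0}Y^g$. Outside the support of $\rho$, the operator is $-\var^2\Delta^{TY}+T^2(D(\mF_X)+A_X)_{y_0}^2$, which is harmless in the limit because the heat kernel decays exponentially away from the fixed set. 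Thus the computation reduces to identifying the limit of the rescaled local expression.

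First, I would expand $\mB_{\var,T/\var}'$ using the Bismut--Lichnerowicz formula for the square of $\mathbb{B}_{\var^2,T/\var}+T\chi(T)A_X$. Combined with (\ref{e06004})--(\ref{e06002}), the result is the sum of: (i) a horizontal Laplacian $-\var^2\Delta^{TY}$ plus curvature terms quadratic in $R^{TY}$ and $R^{L_Y}$; (ii) the fiberwise piece $T^2(D(\mF_X)+A_X)^2$ together with its superconnection completion in the $V\to B$ direction, plus the $dT$ derivative; and (iii) cubic Clifford terms $\la\var^2[f_{p,1}^H,f_{q,1}^H],e_i\ra c(e_i)c(f_{p,1}^H)c(f_{q,1}^H)$ and mixed Clifford--exterior terms controlled by $S_X$ and the brackets $[g_{\alpha,3}^H,g_{\beta,3}^H]$.

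Next, I would apply $S_\var R_\var$. The rescaling $S_\var$ sends $U\mapsto U/\var$, so $-\var^2\Delta^{TY}$ becomes $-\Delta^{TY}_U$, while $R_\var$ converts $c(f_{p,1}^H)$ into $f^{p,H}\wedge/\var-\var\,i_{f_{p,1}^H}$ for $1\le p\le l'$ but leaves $c(e_i)$ and the normal Clifford generators unchanged. A direct check then shows: (a) the $\var^{-2}$ divergences in the cubic terms cancel by antisymmetry in $p,q$ combined with the vanishing of $[f_{p,1}^H,f_{q,1}^H]$ along $Y^g$; (b) the cross terms involving $\la R^{TY}U, f_{p,1}^H\ra$ assemble into the Getzler harmonic oscillator $-\bigl(\partial_p+\tfrac14\la R^{TY}|_{V^g}U,f_{p,1}^H\ra\bigr)^2$; (c) the curvature $R^{L_Z}$ of $L_Z=\pi_X^*L_Y\otimes L_X$ splits as $\pi_X^*R^{L_Y}+R^{L_X}$, and the Spin$^c$ factor $\tfrac12 R^{L_Y}|_{V^g}$ survives from the horizontal piece while $R^{L_X}$ is absorbed in $\mB_{T^2}''|_{V^g}$; (d) the fiberwise piece together with the $dT$ correction coming from $\partial/\partial T'|_{T'=T\var^{-1}}$ assembles to $\mB_{T^2}''|_{V^g}$ as in (\ref{e05013}). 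All other terms scale like $\var^{\ge 1}$ and disappear in the limit.

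The main obstacle is the bookkeeping of Clifford--exterior cross terms appearing in (\ref{e06002}). In particular, the term $4\var\la S_X(g_{\alpha,3}^H)e_i,f_{p,1}^H\ra c(e_i)c(f_{p,1}^H)g^\alpha\wedge$ produces an a priori order $\var^0$ contribution after $R_\var$ through the $f^{p,H}\wedge/\var$ part, so one must verify that the pairing $\la S_X(\cdot),f_{p,1}^H\ra$ vanishes to sufficient order along $V^g$ once restricted to the fixed locus and after antisymmetrization, exactly as in the corresponding step of \cite[Lemma 3.4]{Liuetaform}. Once this cancellation is established, and the localization is justified by standard finite-propagation speed arguments (which are unaffected by the bounded perturbation $A_X$ since it does not alter the principal symbol), the remaining terms converge termwise to the right-hand side of (\ref{e594}), completing the proof.
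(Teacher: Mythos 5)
Your outline reproduces the same Getzler-rescaling argument the paper uses — localize near $\pi_X^{-1}(V^g)$, conjugate by $S_\var$ and $R_\var$, take the termwise limit — and, like the paper's own proof, ultimately defers the detailed bookkeeping to \cite[Section~6]{Liuetaform}. One point to correct in your step on the Clifford--exterior cross terms: the $O(\var^0)$ contribution that $R_\var$ extracts from $4\var\la S_X(g_{\alpha,3}^H)e_i,f_{p,1}^H\ra c(e_i)c(f_{p,1}^H)g^{\alpha}\wedge$ (and the already $\var$-independent $\la[g_{\alpha,3}^H,g_{\beta,3}^H],e_i\ra c(e_i)g^{\alpha}\wedge g^{\beta}\wedge$ term) is not supposed to vanish along $V^g$; these are exactly the torsion terms for the fibration $\pi_X^{-1}(V^g)\to V^g$ inside $\partial_T(\mathbb{B}_{X,T^2}+T\chi(T)A_X)|_{V^g}$, so they are absorbed into $\mB_{T^2}''|_{V^g}$ on the right-hand side of (\ref{e594}) rather than cancelled.
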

	
	So all the computations in our case are the same as \cite[Section 7]{Liu17}.
		
	\section{Equivariant differential K-theory}
	
	In this section, we assume that the $G$-action on $B$ has finite stabilizers only, i.e., for any $b\in B$, $G_b:=
	\{g\in G: gb=b \}$ is finite. 
	With this action, we construct an analytic model of equivariant differential K-theory and prove some properties using the results in Section 2.
%	This construction is the extension of the model of Bunke-Schick \cite{BS09} s. 
	
	\subsection{Definition of equivariant differential K-theory}
	
	In this subsection, we construct an analytic model of equivariant differential K-theory. When $G=\{e\}$, this construction is similar as that in \cite{BS09} except replacing the taming and KK-theory to the spectral section and higher spectral flow.
	
	Let $E$ be a $G$-equivariant  complex vector bundle over $B$. Then its restriction to $B^g$ is acted on fibrewise by $g$ for $g\in G$. So it decomposes as a direct sum of subbundles $E_{v}$ for each eigenvalue $v$ of $g$. Set $\phi_g(E):=\sum v E_{v}$.
	Then it induces a homomorphism (for $K_G^1$, replacing $B$ by $B\times S^1$
	and use (\ref{d124}))
	\begin{align}\label{d119}
	\phi_g: K_G^*(B)\otimes\mathbb{C}\longrightarrow[ K^*(B^g)\otimes\mathbb{C}]^{\mathrm{C}_G(g)},
	\end{align}
	where $\mathrm{C}_G(g)$ is the centralizer of $g$ in $G$.
	Let $(g)$ be the conjugacy class of $g\in G$.  For $g, g'\in (g)$, 
	there exists $h\in G$, such that $g'=h^{-1}gh$. Furthermore, the map
	\begin{align}\label{d190}
	h:B^{g'}/\mathrm{C}_G(g')\rightarrow B^{g}/\mathrm{C}_G(g)
	\end{align}
	is a homeomorphism .
	So 
	\begin{align}\label{eq:3.03}
	[ K^*(B^{g})\otimes\mathbb{C}]^{\mathrm{C}_G(g)}\simeq [ K^*(B^{g'})\otimes\mathbb{C}]^{\mathrm{C}_G(g')}.
	\end{align}
	By \cite[Corollary 3.13]{ALR07}, we know that
		the 	
		additive decomposition
		\begin{align}\label{d113}
		\phi=\oplus_{(g),g\in G}\phi_g:
		K_G^*(B)\otimes\mathbb{C}\rightarrow \bigoplus_{(g),g\in G}[ K^*(B^g)\otimes\mathbb{C}]^{\mathrm{C}_G(g)}
		\end{align}
		is an isomorphism, where $(g)$ ranges over the conjugacy classes of $G$.
	
%	Since $B$, $G$ are compact and the group action is with finite stabilizers, the direct sum in (\ref{d113}) only has finite terms and does not depend the choice of the element in $(g)$.
If $B^g\neq \emptyset$, then there exists $b\in B^g$ such that
$g\in G_b$. The conjugacy class of $G_b$ is the type of the 
orbit $G\cdot b$. Since $B$ is compact, there are only finitely
many orbit types. Since all stablizers are finite groups, we see that
the direct sum in (\ref{d113}) only has finite terms.
From the isomorphism (\ref{eq:3.03}), the direct sum in (\ref{d113})
does not depend on the choice of the element in $(g)$ in the sense
of (\ref{eq:3.03}).
		
	From (\ref{d190}), we also know that the map $h^*$ induces an isomorphism
	\begin{align}\label{d160}
	h^*:[\Omega^*(B^g,\C)]^{\mathrm{C}_G(g)}\rightarrow [\Omega^*(B^{g'},\C)]^{\mathrm{C}_G(g')}.
	\end{align}
	We denote by
	\begin{align}\label{d105}
	\Omega_{deloc, G}^*(B,\C):=\bigoplus_{(g),g\in G} \left\{[\Omega^*(B^g,\C)]^{\mathrm{C}_G(g)}\right\},
	\end{align}
	the set of delocalized differential forms, where $\{\cdot\}$ denotes the isomorphic class in sense of (\ref{d160}). The definition above
	does not depend on the choice of $g\in (g)$.
	It is easy to see that the exterior differential operator $d$ preserves $\Omega_{deloc, G}^*(B,\C)$.
	We denote by
	the delocalized de Rham cohomology $H^*_{deloc, G}(B,\C)$  the cohomology of the differential complex $(\Omega_{deloc, G}^*(B,\C), d)$.		
	Then from (\ref{d119}) and (\ref{d113}), the equivariant Chern character isomorphism can be naturally defined by
	\begin{align}\label{d116}
	\begin{split}
	\ch_{G}:%K_{orb}^*(\mathcal{X})\otimes\mathbb{C}\simeq  
	K_G^*(B)\otimes\mathbb{C}&\overset{\simeq}{\longrightarrow} H^*_{deloc, G}(B,\C),
	\\
	\mathcal{K}\quad&\mapsto \bigoplus_{(g),g\in G} \left\{\ch(\phi_g(\mathcal{K}))\right\}.
	\end{split}
	\end{align}
	We note that $\ch(\phi_g(\mathcal{K}))=\ch_g(\mathcal{K})$ is $\mathrm{C}_G(g)$-invariant by the definition.
	
	Observe that the fixed point set
	for $g$-action coincides with that for $g^{-1}$-action. Set 
	%(compare with \cite[(1)]{Or09})
	\begin{align}\label{d102}
	H^*_{deloc, G}(B,\R):=\{c=\oplus_{(g),g\in G}\{c_{g}\}\in H^*_{deloc, G}(B,\C): \forall g\in G, c_{g^{-1}}=\overline{c_{g}}\}.
	\end{align}
	Let $\Omega^*_{deloc, G}(B,\R)\subset \Omega^*_{deloc, G}(B,\C)$ be the ring of forms $\omega=\oplus_{(g),g\in G}\{\omega_{g}\}$, such that $\forall g\in G, \omega_{g^{-1}}=\overline{\omega_{g}}$.
	Then $H^*_{deloc, G}(B,\R)$ is the cohomology of the differential complex $(\Omega^*_{deloc, G}(B,\R), d)$.
%	From (\ref{d116}), with a bit of effort, we could obtain the group isomorphism
Since $\ch(\phi_{g^{-1}}(\mathcal{K}))=\overline{\ch(\phi_{g}(\mathcal{K}))}$, from (\ref{d116}), for any $\mathcal{K}\in K_G^*(B)$, 
$\ch_G(\mathcal{K})\in H_{deloc, G}^*(B,\R)$. 
Thus $\ch_G(K_G^*(B)\otimes\R)\subseteq H_{deloc, G}^*(B,\R)$.
Since (\ref{d116}) is an isomorphism, we obtain a group isomorphism
	\begin{align}\label{d120}
	\begin{split}
	\ch_{G}:K_{G}^*(B)\otimes\mathbb{R}\overset{\simeq}{\longrightarrow} H^*_{deloc, G}(B,\R).
	\end{split}
	\end{align}
	
	\begin{defn}\label{d043}(Compare with \cite[Definition 2.4]{BS09})
		A cycle for an equivariant differential K-theory class over $B$
		is a pair $(\mF, \rho)$, where $\mF\in \mathrm{F}_G^*(B)$ and $\rho\in \Omega^*_{deloc, G}(B,\R)/\Im \,d$.
		The cycle $(\mF, \rho)$ is called even (resp. odd) if $\mF$ is even (resp. odd) and
		$\rho\in \Omega^{\mathrm{odd}}_{deloc, G}(B,\R)/\Im \,d$ (resp. $\rho\in \Omega^{\mathrm{even}}_{deloc, G}(B,\R)/\Im \,d$).
		Two cycles ($\mF, \rho$) and ($\mF', \rho'$) are called isomorphic if $\mF$ and $\mF'$ are isomorphic
		and $\rho=\rho'$. Let $\widehat{\mathrm{IC}}_G^0(B)$ (resp. $\widehat{\mathrm{IC}}_G^1(B)$) denote the set of isomorphic classes of even (resp. odd) cycles over $B$ with a natural abelian semi-group structure by $(\mF,\rho)+(\mF',\rho')=(\mF+\mF',\rho+\rho')$.
	\end{defn}
	
	For $\mF\in \mathrm{F}_G^*(B)$, we assume that there exists a perturbation operator $A$ with respect to $D(\mF)$.
	For any $g\in G$, by Definition \ref{d017}, the equivariant eta form restricted on the fixed point set of $g$ is $\mathrm{C}_G(g)$-invariant, that is,  $\widetilde{\eta}_g(\mF|_{B^g},A|_{B^g})\in [\Omega^*(B^g, \C)]^{\mathrm{C}_G(g)}$. Let $h^*$ be the map in (\ref{d160}). Since the perturbation operator $A$ is equivariant, from Definition \ref{d017}, we have
	\begin{align}\label{d121}
	\widetilde{\eta}_{g'}(\mF|_{B^{g'}},A|_{B^{g'}})=h^*\widetilde{\eta}_g(\mF|_{B^g},A|_{B^g}).
	\end{align}
	From Definition \ref{d017}, $\widetilde{\eta}_{g^{-1}}(\mF|_{B^g},A|_{B^g})=\overline{\widetilde{\eta}_g(\mF|_{B^g},A|_{B^g})}$.
So	the following definition is well-defined.
	
	\begin{defn}\label{d122}
		The delocalized eta form $\widetilde{\eta}_G(\mF,A)$
		is defined by
		\begin{align}\label{d046}
		\widetilde{\eta}_G(\mF,A)=\bigoplus_{(g),g\in G}\left\{\widetilde{\eta}_g(\mF|_{B^g},A|_{B^g})\right\}\in \Omega^*_{deloc, G}(B,\R)/\Im\, d.
		\end{align}	
	\end{defn}
	
	By the same process, we can define
	\begin{align}\label{d059}
	\mathrm{FLI}_G(\mF)=\bigoplus_{(g), g\in G}\left\{\mathrm{FLI}_g(\mF)\right\}\in \Omega^*_{deloc, G}(B,\R).
	\end{align}
	From (\ref{d019}), we have
	\begin{align}\label{eq:3.12}
	d\widetilde{\eta}_G(\mF,A)=\mathrm{FLI}_G(\mF).
	\end{align}
	
	Let $\mF\in \mathrm{F}_G^*(B)$ and $A$ be a perturbation operator with respect to $D(\mF)$. Then by Definition \ref{d041}, there exists a perturbation operator $A^{\mathrm{op}}$ with respect to $D(\mF^{\mathrm{op}})$ such that
	\begin{align}\label{d140}
	\widetilde{\eta}_G(\mF^{\mathrm{op}},A^{\mathrm{op}})=-\widetilde{\eta}_G(\mF,A).
	\end{align}
	Let $\mF, \mF'\in \mathrm{F}_G^*(B)$, $A$, $A'$ be perturbation operators with respect to $D(\mF)$, $D(\mF')$ respectively. By Definition \ref{d017}, we have
	\begin{align}\label{d145}
	\widetilde{\eta}_G(\mF+\mF', A\sqcup_B A')=\widetilde{\eta}_G(\mF,A)+\widetilde{\eta}_G(\mF',A').
	\end{align}
   From Remark \ref{d134} , we know that for any $\mF\in \mathrm{F}_G^*(B)$, 
	there exists a perturbation operator $A$ with respect to $D(\mF+\mF^{\mathrm{op}})$ and $A=A^{\mathrm{op}}$.
	From (\ref{d140}), we have
	\begin{align}\label{d141}
	\widetilde{\eta}_G(\mF+\mF^{\mathrm{op}},A)=0.
	\end{align}
	
	\begin{defn}\label{d049}(Compare with \cite[Definition 2.10]{BS09})
		We call two cycles ($\mF, \rho$) and ($\mF', \rho'$) \textbf{paired} if $\ind(D(\mF))=\ind(D(\mF'))$, and there exists
		a perturbation operator $A$ with respect to $D(\mF+\mF^{'\mathrm{op}})$ such that
		\begin{align}\label{d050}
		\rho-\rho'= \widetilde{\eta}_G(\mF+\mF^{'\mathrm{op}}, A).
		\end{align}
		
	\end{defn}
	
	From (\ref{d140})-(\ref{d141}), we have
	\begin{lemma}\label{d139}(Compare with \cite[Lemmas 2.11, 2.12]{BS09})
		The relation "paired" is symmetric, reflexive and compatible with the semigroup structure on $\widehat{\mathrm{IC}}_G^*(B)$.
	\end{lemma}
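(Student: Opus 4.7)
The proof proposal is to verify each of the three properties directly from Definition \ref{d049}, using the elementary identities (\ref{d140}), (\ref{d145}), and (\ref{d141}) together with the fact that the equivariant index behaves additively under disjoint union and changes sign under the opposite operation.

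For reflexivity of $(\mF,\rho)$ with itself, the index condition $\ind(D(\mF))=\ind(D(\mF))$ holds trivially, while $\ind(D(\mF+\mF^{\mathrm{op}}))=0$ by (\ref{d042}), so a perturbation operator for $D(\mF+\mF^{\mathrm{op}})$ exists by Remark \ref{d134}. By (\ref{d141}), one can choose such a perturbation operator $A$ with $\widetilde{\eta}_G(\mF+\mF^{\mathrm{op}},A)=0$, matching $\rho-\rho=0$.

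For symmetry, suppose $(\mF,\rho)$ is paired with $(\mF',\rho')$ via a perturbation $A$ for $D(\mF+\mF^{'\mathrm{op}})$ with $\rho-\rho'=\widetilde{\eta}_G(\mF+\mF^{'\mathrm{op}},A)$. Since $(\mF+\mF^{'\mathrm{op}})^{\mathrm{op}}=\mF^{\mathrm{op}}+\mF'=\mF'+\mF^{\mathrm{op}}$, the opposite perturbation $A^{\mathrm{op}}$ constructed as in (\ref{d140}) is a perturbation of $D(\mF'+\mF^{\mathrm{op}})$, and
\begin{align*}
\widetilde{\eta}_G(\mF'+\mF^{\mathrm{op}},A^{\mathrm{op}})=-\widetilde{\eta}_G(\mF+\mF^{'\mathrm{op}},A)=\rho'-\rho,
\end{align*}
which together with $\ind(D(\mF'))=\ind(D(\mF))$ gives the reversed pairing.

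For compatibility with the sum, take two pairings $(\mF_1,\rho_1)$ with $(\mF_1',\rho_1')$ via $A_1$ and $(\mF_2,\rho_2)$ with $(\mF_2',\rho_2')$ via $A_2$. The index condition is additive, so $\ind(D(\mF_1+\mF_2))=\ind(D(\mF_1'+\mF_2'))$. Taking the disjoint-union perturbation $A_1\sqcup_B A_2$ on $D((\mF_1+\mF_2)+(\mF_1'+\mF_2')^{\mathrm{op}})$ after reordering summands, (\ref{d145}) gives
\begin{align*}
\widetilde{\eta}_G((\mF_1+\mF_2)+(\mF_1'+\mF_2')^{\mathrm{op}},A_1\sqcup_B A_2)
=(\rho_1-\rho_1')+(\rho_2-\rho_2'),
\end{align*}
so the sums are paired. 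The main (minor) subtlety is keeping track of the reordering of summands in $\mathrm{F}_G^*(B)/\!\simeq$, but this is harmless because disjoint union is commutative up to isomorphism of equivariant geometric families. Since compatibility and symmetry already hold at the level of the relation "paired", they pass automatically to the generated equivalence $\sim$, proving the lemma.
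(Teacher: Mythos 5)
Your proof is correct and takes essentially the same approach as the paper: the paper states Lemma~\ref{d139} as a one-line consequence of the three identities (\ref{d140}), (\ref{d145}), and (\ref{d141}), and your write-up is precisely the intended unpacking, matching each property to the relevant identity (reflexivity via (\ref{d141}), symmetry via (\ref{d140}) together with $(\mF+\mF^{'\mathrm{op}})^{\mathrm{op}}\simeq\mF'+\mF^{\mathrm{op}}$, and semigroup compatibility via (\ref{d145})).
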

		
	\begin{defn}\label{d051}(Compare with \cite[Definition 2.14]{BS09})
		Let $\sim$ denote the equivalence relation generated by the relation "paired".	
		The equivariant differential K-group $\widehat{K}_G^0(B)$ (resp. $\widehat{K}_G^{1}(B)$) is the group completion
		of the abelian semigroup $\widehat{\mathrm{IC}}_G^{\mathrm{even}}(B)/\sim$ (resp. $\widehat{\mathrm{IC}}_G^{\mathrm{odd}}(B)/\sim$).
	\end{defn}
	
%	\begin{rem}
%		Note that in \cite[Section 3.3]{Or09}, the author constructed a geometric model of equivariant differential $K^0$ theory under the finite group action. From (\ref{i19}) and the explanations after it, we could easily see that when $G$ is finite, the definition of $\widehat{K}_G^0$ in that paper is isomorphic to that of ours.	\end{rem}
	
	If $(\mF, \rho)\in \widehat{\mathrm{IC}}_G^{*}(B)$, we denote by $[\mF, \rho]\in \widehat{K}_G^*(B)$ the corresponding class in equivariant
	differential K-group. From (\ref{d140})-(\ref{d141}), for any $[\mF, \rho],[\mF', \rho']\in \widehat{K}_G^*(B)$, we have
	\begin{align}\label{d146}
	[\mF, \rho]=[\mF+\mF^{'\mathrm{op}}, \rho-\rho']+[\mF', \rho'].
	\end{align}
	So every element of $\widehat{K}_G^*(B)$ can be represented in the form $[\mF, \rho]$. Furthermore, we have	
	$-[\mF, \rho]=[\mF^{\mathrm{op}}, -\rho]$. 
	
	\subsection{Push-forward map}
	
	In this subsection, we construct a well-defined  push-forward map in equivariant differential K-theory and prove the functoriality of it using the theorems in Section 2. This solves a question proposed in \cite{BS09} when $G=\{e\}$. We use the notation in Section 1.4.
		
	Let $\pi_Y: V\rightarrow B$ be an equivariant smooth surjective proper submersion of compact $G$-manifolds with compact orientable fibers $Y$.
	We assume that the $G$-action on $B$ has finite stabilizers only. 
	Thus, so is the action on $V$. We assume that $TY$ is oriented 
	and $\pi_Y$ has an equivariant K-orientation in Definition
	\ref{d154}.
    
    For $g\in G$, the fixed point set $V^g$ is the total space of the fiber bundle $\pi_Y|_{V^g}: V^g\rightarrow B^g$ with fibers $Y^g$. Since the pullback isomorphism $h^*$ in (\ref{d160}) commutes with the integral along the fiber, for $\alpha=\oplus_{(g),g\in G} \{\alpha_{g}\}\in \Omega^{*}_{deloc, G}(V, \R)$, the integral
    \begin{align}\label{d193}
    \int_{Y,G}\alpha:=\bigoplus_{(g),g\in G}\left\{\int_{Y^g} \alpha_g\right\}\in\Omega^{*}_{deloc, G}(B, \R)
    \end{align} 
    does not depend on $g\in (g)$. So it defines an integral map
    \begin{align}
    \int_{Y,G}: \Omega^{*}_{deloc, G}(V, \R)\rightarrow \Omega^{*}_{deloc, G}(B, \R).
    \end{align} 
	
	Consider the set $\widehat{\mathcal{O}}_G^*(\pi_Y)$ of equivariant
	geometric data $\widehat{o}_Y=(T_{\pi_Y}^HV, g^{TY}, \nabla^{L_Y}, \sigma_Y)$, where $\sigma_Y\in \Omega^{odd}_{deloc, G}(V)/\Im d$.
		
	Let
	\begin{align}\label{d080}
	\td_G(\nabla^{TY}, \nabla^{L_Y}):=\bigoplus_{(g),g\in G}\left\{\td_g(\nabla^{TY}, \nabla^{L_Y})\right\}\in \Omega_{deloc, G}^*(V, \R).
	\end{align}
	Let $\widehat{o}_Y'=(T^{'H}_{\pi_Y}V, g^{'TY}, \nabla^{'L_Y}, \sigma_Y')\in \widehat{\mathcal{O}}_G^*(\pi_Y)$ be another equivariant tuple with the same equivariant K-orientation in Definition \ref{d154}.
	As in (\ref{d182}),
	from \cite[Theorem B.5.4]{MM07}, we can construct the Chern-Simons form
	$\widetilde{\td}_G(\nabla^{TY}, \nabla^{L_Y}$, $\nabla^{'TY}, \nabla^{'L_Y})\in\Omega^{odd}_{deloc, G}(V)/\Im d$ such that
	\begin{align}\label{d078}
	d\,\widetilde{\td}_G(\nabla^{TY}, \nabla^{L_Y}, \nabla^{'TY}, \nabla^{'L_Y})=\td_G(\nabla^{'TY}, \nabla^{'L_Y})-\td_G(\nabla^{TY}, \nabla^{L_Y}).
	\end{align}
	
	We introduce a relation $\widehat{o}_Y\sim \widehat{o}_Y'$ as in \cite{BS09}: two equivariant tuples
	$\widehat{o}_Y$, $\widehat{o}_Y'$ are related if and only if
	\begin{align}\label{d079}
	\sigma_Y'-\sigma_Y=\widetilde{\td}_G(\nabla^{TY}, \nabla^{L_Y}, \nabla^{'TY}, \nabla^{'L_Y}),
	\end{align}
	where we mark the objects associated with the second tuple by $'$.
	
	\begin{defn}\label{d081}(Compare with \cite[Definition 3.5]{BS09})
		The set of equivariant differential K-orientations is the set of equivalence classes $\widehat{\mathcal{O}}_G^*(\pi_Y)/\sim$.
	\end{defn}
	
	We now start with the construction of the push-forward map $\widehat{\pi}_Y!:\widehat{K}_G^*(V)\rightarrow \widehat{K}_G^*(B)$
	for a given equivariant differential K-orientation which extends Theorem \ref{d157} to the differential case.
	For $[\mF_X, \rho]\in \widehat{K}_G^{*}(V)$,
	let $\mF_Z$ be the equivariant geometric family defined in (\ref{d073}). 
	We define (cf. \cite[(17)]{BS09})
	\begin{multline}\label{d082}
	\widehat{\pi}_Y!([\mF_X, \rho])=\left[\mF_Z, \int_{Y,G} \td_G(\nabla^{TY}, \nabla^{L_Y})\wedge \rho+\widetilde{\mathrm{FLI}}_G\left(\nabla^{TY,TX}, \nabla^{L_Z}, \nabla^{TZ}, \nabla^{L_Z}\right)\right.
	\\\left.	
	+ \int_{Y,G} \sigma_Y\wedge (\mathrm{FLI}_G(\mF_X)-d\rho) \right]\in \widehat{K}_G^*(B),
	\end{multline}
	where $\widetilde{\mathrm{FLI}}_G:=\bigoplus_{(g),g\in G}\widetilde{\mathrm{FLI}}_g\in \Omega_{deloc, G}^*(B, \R)/\Im d$.
	
	\begin{thm}\label{d083}(Compare with \cite[Lemma 3.14]{BS09})
		The map $\widehat{\pi}_Y!:\widehat{K}_G^*(V)\rightarrow \widehat{K}_G^*(B)$ in (\ref{d082}) is well-defined.
	\end{thm}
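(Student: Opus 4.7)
The plan is to verify that the right-hand side of (\ref{d082}) is unambiguous as a class in $\widehat{K}_G^*(B)$ by handling three independent sources of ambiguity: (a) the choice of geometric data $(T_{\pi_Z}^HW, g^{TZ}, h^{L_Z}, \nabla^{L_Z})$ used to form $\mF_Z$ in (\ref{d073}); (b) the choice of representative $\widehat{o}_Y$ within its $\sim$-class via (\ref{d079}); and (c) the \emph{paired} equivalence on the input cycle $(\mF_X, \rho)$. In each case the goal is to exhibit a perturbation operator witnessing that the two candidate outputs are paired in the sense of Definition \ref{d049}.

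For (a) and (b) both alterations only change metric and connection data, so the two competing $\mF_Z$'s share the same topological structure and $\ind(D(\mF_Z)) = \ind(D(\mF_Z'))$ by Theorem \ref{d157}. Using the delocalized anomaly formula (obtained by summing the form-valued content of Theorem \ref{d176} over conjugacy classes as in Definition \ref{d122}) together with the Chern--Weil transgression identities (\ref{d184}) and (\ref{d078}), one computes that the difference of the two form-parts of (\ref{d082}) equals $\widetilde{\eta}_G(\mF_Z + \mF_Z'^{\mathrm{op}}, A) - \ch_G(\xi)$ modulo exact forms, for some equivariant higher spectral flow class $\xi \in K_G^*(B)$. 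By Lemma \ref{d106} and its even analogue derived from (\ref{d138}), the residual Chern character $\ch_G(\xi)$ is realized as the eta-form difference between perturbation operators associated to two different equivariant spectral sections, so after possibly enlarging the family by an auxiliary piece as in (\ref{d141}) and replacing $A$ by the correspondingly modified perturbation, one obtains a direct pairing.

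For (c), suppose $(\mF_X,\rho) \sim (\mF_X',\rho')$ via a perturbation $A_X$ on $\mF_X + \mF_X'^{\mathrm{op}}$ satisfying $\rho - \rho' = \widetilde{\eta}_G(\mF_X + \mF_X'^{\mathrm{op}}, A_X)$. Differentiating and invoking (\ref{d019}) gives $\mathrm{FLI}_G(\mF_X) - d\rho = \mathrm{FLI}_G(\mF_X') - d\rho'$, so the two $\sigma_Y$-contributions in (\ref{d082}) cancel in the difference, and what remains is $\int_{Y,G}\td_G(\nabla^{TY},\nabla^{L_Y}) \wedge \widetilde{\eta}_G(\mF_X + \mF_X'^{\mathrm{op}}, A_X)$ minus the difference of the two fibered Chern--Simons terms $\widetilde{\mathrm{FLI}}_G$ evaluated at the geometric data of $\mF_Z$ and $\mF_Z'$. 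The delocalized form of the functoriality Theorem \ref{d188}, applied with the adiabatic perturbation $A_Z = 1\widehat{\otimes} TA_X$ from Lemma \ref{d089}, identifies this expression with $\widetilde{\eta}_G(\mF_Z + \mF_Z'^{\mathrm{op}}, A_Z)$ modulo the Chern character of a higher spectral flow, which is again absorbed as in (a)--(b). The main obstacle throughout is the bookkeeping of the spectral flow residues produced by Theorems \ref{d176} and \ref{d188}: their systematic absorption into the witness perturbation operator relies on Proposition \ref{d006}(iii), the additivity (\ref{d011})--(\ref{d012}) of equivariant higher spectral flows, and the flexibility of adding a cancelling pair $\mF_0 + \mF_0^{\mathrm{op}}$ of geometric families as in (\ref{d141}).
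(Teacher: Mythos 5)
Your proposal follows the same strategy as the paper: you reduce to showing $\widehat{\pi}_Y!([\mF_X,\widetilde{\eta}_G(\mF_X,A_X)])=0$, apply the delocalized functoriality theorem with the adiabatic perturbation $1\widehat{\otimes}TA_X$ to land on $[\mF_Z,\widetilde{\eta}_G(\mF_Z,A_Z)-\ch_G(\xi)]$, and then absorb the higher spectral flow residue by realizing $\ch_G(\xi)$ as a difference of eta forms on an auxiliary family $\mF+\mF^{\mathrm{op}}$. One small remark: where you cite Lemma \ref{d106} and ``its even analogue'' to realize $\ch_G(\xi)$ as an eta-form difference, the paper's mechanism is Proposition \ref{d123} (representing any class in $K_G^*(B)$ as $[P-Q]$ for spectral sections on a concrete $\mF$) combined with Theorem \ref{d176}; also, your items (a)--(b) (independence of geometric data and of the representative of the differential $K$-orientation) are treated in the paper as the separate Lemma \ref{d090} rather than as part of this theorem.
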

	\begin{proof}
		Let $(\mF_X, \rho)$, $(\mF_X', \rho')$ be two cycles over $V$.
		By (\ref{d082}), we have
		\begin{align}\label{d084}
		\widehat{\pi}_Y!(\mF_X, \rho)-\widehat{\pi}_Y!(\mF_X', \rho')=\widehat{\pi}_Y!(\mF_X+\mF_X^{'\mathrm{op}}, \rho-\rho').
		\end{align}
		If $(\mF_X, \rho)$ is paired with $(\mF_X', \rho')$, there exists a perturbation operator $A$, such that
		\begin{align}\label{d085}
		\rho-\rho'=\widetilde{\eta}_G(\mF_X+\mF_X^{'\mathrm{op}}, A).
		\end{align}
		So we only need to prove that if there exists a perturbation operator $A_X$ with respect to $D(\mF_X)$,
		 $\widehat{\pi}_Y!([\mF_X, \widetilde{\eta}_G(\mF_X, A_X)])=0\in \widehat{K}_G^*(B)$.
		
		From (\ref{d082}), we have
		\begin{multline}\label{d086}
		\widehat{\pi}_Y!([\mF_X, \widetilde{\eta}_G(\mF_X, A_X)])=\left[\mF_Z, \int_{Y,G} \td_G(\nabla^{TY}, \nabla^{L_Y})\,
		\widetilde{\eta}_G(\mF_X, A_X)\right.
		\\
		+\widetilde{\mathrm{FLI}}_G\left(\nabla^{TY,TX}, \nabla^{L_Z}, \nabla^{TZ}, \nabla^{L_Z}\right)
		+ \left.\int_{Y,G} \sigma_Y\wedge (\mathrm{FLI}_G(\mF_X)-d \widetilde{\eta}_G(\mF_X, A_X)) \right].
		\end{multline}
		From Proposition \ref{d006} (iii) and Lemma \ref{d089}, there exists a perturbation operator $A_Z$ with respect to $D(\mF_Z)$.
		By Theorem \ref{d188}, (\ref{eq:3.12}), (\ref{d145}) and (\ref{d193}), there exists $x\in K_G^*(B)$ such that
		\begin{align}\label{d087}
		\widehat{\pi}_Y!(\mF_X, \widetilde{\eta}_G(\mF_X, A_X))=\left[\mF_Z,
		\widetilde{\eta}_G(\mF_Z, A_Z)-\ch_G(x) \right].
		\end{align}
				From Proposition \ref{d123}, 
		if $x\in K_G^1(B)$,there exist $\mF\in \mathrm{F}_G^1(B)$  and equivariant spectral sections $P$, $Q$ with respect to $D(\mF)$, such that  $[P-Q]=x$. Let $A_P$, $A_Q$ be perturbation operators associated with $P$, $Q$ respectively. From Theorem \ref{d176}, we have
		\begin{align}\label{d115} 
		\ch_G(x)=\widetilde{\eta}_G(\mF, A_P)-\widetilde{\eta}_G(\mF, A_Q).
		\end{align} 
		%From Proposition \ref{d123}, 
		If $x\in K_G^0(B)$, by Proposition \ref{d123}, there exist $\mF_1, \mF_2\in \mathrm{F}_G^0(B)$  and equivariant spectral sections $P_1$, $Q_1$ of $D(\mF_1)$ and $P_2$, $Q_2$ of $D(\mF_2)$, such that  $x=[P_1-Q_1]-[P_2-Q_2]$. Let $A_{P_i}$, $A_{Q_i}$ be perturbation operators associated with $P_i$, $Q_i$
		for $i=0,1$. From Theorem \ref{d176}, letting 
		$\mF=\mF_1+\mF_2$, $A_P=A_{P_1}\sqcup_B A_{Q_2}$, 
		$A_Q=A_{P_2}\sqcup_B A_{Q_1}$, we also have
		\begin{multline}\label{eq:3.28} 
		\ch_G(x)=\widetilde{\eta}_G(\mF_1, A_{P_1})-\widetilde{\eta}_G(\mF_1, A_{Q_1})-(\widetilde{\eta}_G(\mF_2, A_{P_2})-\widetilde{\eta}_G(\mF_2, A_{Q_2}))
		\\
		=\widetilde{\eta}_G(\mF_1+\mF_2, A_{P_1}\sqcup_B A_{Q_2})-\widetilde{\eta}_G(\mF_1+\mF_2, A_{P_2}\sqcup_B A_{Q_1})
		=\widetilde{\eta}_G(\mF, A_P)-\widetilde{\eta}_G(\mF, A_Q).
		\end{multline} 
		By (\ref{d140}), (\ref{d087})-(\ref{eq:3.28}) and Definition \ref{d049}, we have
			\begin{multline}\label{d200}
			\widehat{\pi}_Y!(\mF_X, \widetilde{\eta}_G(\mF_X, A_X))=\left[\mF_Z,
			\widetilde{\eta}_G(\mF_Z, A_Z)-\widetilde{\eta}_G(\mF, A_P)-\widetilde{\eta}_G(\mF^{\mathrm{op}}, A_Q^{\mathrm{op}})\right]
			\\
			=[\mF+\mF^{\mathrm{op}},0]=[\mF,0]-[\mF,0]=0\in \widehat{K}_G^*(B).
			\end{multline}
				
		Then from Theorem \ref{d157}, we complete the proof of Theorem \ref{d083}.
	\end{proof}
	
	Here our construction of $\widehat{\pi}_Y!$ involve an explicit choice of a representative $\widehat{o}_Y=(T^H_{\pi_Y}V, g^{TY}$, $\nabla^{L_Y}, \sigma_Y)$
	of the equivariant differential K-orientation. In fact, it does not depend on the choice.
	
	\begin{lemma}\label{d090}(Compare with \cite[Lemma 3.17]{BS09})
		The homomorphism $\widehat{\pi}_Y!:\widehat{K}_G^*(V)\rightarrow \widehat{K}_G^*(B)$
		only depend on the equivariant differential K-orientation.
	\end{lemma}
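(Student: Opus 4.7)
The plan is to show that if $\widehat{o}_Y\sim\widehat{o}_Y'$ are equivalent representatives of a single equivariant differential $K$-orientation, then the corresponding maps $\widehat{\pi}_Y!$ and $\widehat{\pi}_Y'!$ agree on every class $[\mF_X,\rho]\in\widehat{K}_G^*(V)$. Let $\mF_Z$ and $\mF_Z'$ denote the equivariant geometric families over $B$ produced by (\ref{d073}) from the two representatives; they share the same topological structure, and Lemma \ref{d089} gives $\ind(D(\mF_Z))=\ind(D(\mF_Z'))$, so the anomaly formula Theorem \ref{d176} applies. First, unwinding the definition (\ref{d082}) I would express
$$\widehat{\pi}_Y!([\mF_X,\rho])-\widehat{\pi}_Y'!([\mF_X,\rho])=[\mF_Z+\mF_Z^{'\mathrm{op}},\,\Delta]$$
for an explicit form $\Delta\in\Omega^*_{deloc,G}(B,\R)/\Im\,d$ built from the two $\td_G$, $\sigma_Y$, and $\widetilde{\mathrm{FLI}}_G$ contributions.

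Second, I would show $\Delta\equiv\widetilde{\mathrm{FLI}}_G(\mF_Z,\mF_Z')$ modulo exact forms. Using the defining relation $\sigma_Y'-\sigma_Y=\widetilde{\td}_G(\nabla^{TY},\nabla^{L_Y},\nabla^{'TY},\nabla^{'L_Y})$, together with (\ref{d078}), (\ref{e01110}) and Stokes' formula along the closed fibers $Y^g$ (so that $d$ commutes with $\int_{Y,G}$ up to a standard sign), the $\rho$- and $d\rho$-dependent contributions should cancel in pairs, leaving only the purely geometric difference of the two $\widetilde{\mathrm{FLI}}_G$ terms in (\ref{d082}) together with $-\int_{Y,G}\widetilde{\td}_G(\nabla^{TY},\nabla^{L_Y},\nabla^{'TY},\nabla^{'L_Y})\wedge\mathrm{FLI}_G(\mF_X)$. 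Applying the cocycle (triangle) identity for equivariant Chern-Simons classes $\widetilde{\td}_G$ modulo exact forms to the chain of connections $\nabla^{TZ}\to\nabla^{TY,TX}\to\nabla^{'TY,TX}\to\nabla^{'TZ}$ on $TZ$, and using that $\nabla^{TY,TX}$ splits along $\pi_X^*TY\oplus TX$ with the $TY$-summand pulled back from $\nabla^{TY}$, these summands should reassemble into precisely $\widetilde{\mathrm{FLI}}_G(\mF_Z,\mF_Z')$.

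Third, having identified $\Delta\equiv\widetilde{\mathrm{FLI}}_G(\mF_Z,\mF_Z')$, I would apply Theorem \ref{d176} to $\mF_Z$ and $\mF_Z'$ with perturbation operators supplied by Proposition \ref{d006}(iii) (available since the equivariant index of the difference vanishes by Lemma \ref{d089}) to obtain
$$\Delta\equiv\widetilde{\eta}_G(\mF_Z+\mF_Z^{'\mathrm{op}},A)-\ch_G(x)$$
for some perturbation operator $A$ and some $x\in K_G^*(B)$. The residual term $\ch_G(x)$ is then absorbed exactly as in the proof of Theorem \ref{d083}: use Proposition \ref{d123} to realize $x=[P-Q]$ on some auxiliary family $\mF$, rewrite $\ch_G(x)=\widetilde{\eta}_G(\mF,A_P)-\widetilde{\eta}_G(\mF,A_Q)$ by a second application of Theorem \ref{d176}, and kill the extra piece using $[\mF+\mF^{\mathrm{op}},0]=0\in\widehat{K}_G^*(B)$. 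The main obstacle is the Chern-Simons bookkeeping in step two: one must carefully track the transgression signs on the various fixed-point sets $B^g$, $V^g$, $W^g$ and verify that the orthogonal splitting $\nabla^{TZ}\leftrightarrow\nabla^{TY,TX}$ (differing by a second fundamental form supported in the $TX$ direction) is compatible with the triangle relation for $\widetilde{\td}_G$ modulo exact forms; once this algebraic identification is secured, the remaining steps are a direct adaptation of the argument used for Theorem \ref{d083}.
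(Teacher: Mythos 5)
Your proposal matches the paper's proof: both express the difference $\widehat{\pi}_Y'!-\widehat{\pi}_Y!$ as a cycle over $\mF_Z'+\mF_Z^{\mathrm{op}}$, use the defining relation $\sigma_Y'-\sigma_Y=\widetilde{\td}_G(\cdots)$ and the splitting identity $\widetilde{\mathrm{FLI}}_G(\nabla^{TY,TX},\cdot,\nabla^{'TY,TX},\cdot)=\int_{Y,G}\widetilde{\td}_G(\nabla^{TY},\cdot,\nabla^{'TY},\cdot)\wedge\mathrm{FLI}_G(\mF_X)$ together with the cocycle relation for Chern--Simons forms to reduce the form part to $\widetilde{\mathrm{FLI}}_G(\mF_Z,\mF_Z')$, and then apply Theorem \ref{d176} and reuse the $\ch_G$-absorption argument from the proof of Theorem \ref{d083}. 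This is essentially the same approach as the paper's proof of the lemma.
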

	\begin{proof}
		Let $\widehat{o}_Y=(T_{\pi_Y}^HV, g^{TY}, \nabla^{L_Y}, \sigma_Y)$, $\widehat{o}_Y'=(T_{\pi_Y}^{'H}V, g^{'TY}, \nabla^{'L_Y}, \sigma_Y')$
		be two representatives of an equivariant differential K-orientation.
		We will mark the objects associated with the second representative by $'$.
		From (\ref{d195}), we could get
		\begin{align}\label{d196}
		\widetilde{\mathrm{FLI}}_G(\nabla^{TY,TX}, \nabla^{L_Z}, \nabla^{'TY,TX}, \nabla^{'L_Z})=\int_{Y,G}\widetilde{\td}_G(\nabla^{TY}, \nabla^{L_Y}, \nabla^{'TY}, \nabla^{'L_Y})\, \mathrm{FLI}_G(\mF_X).
		\end{align}
		
		Then from (\ref{d078}), (\ref{d079}) and (\ref{d196}), we have
		\begin{multline}\label{d093}
		\widehat{\pi}_Y'!([\mF_X, \rho])-\widehat{\pi}_Y!([\mF_X, \rho])=[\mF_Z'+\mF_Z^{\mathrm{op}},
		\\
		\int_{Y,G} \left(\td_G(\nabla^{'TY}, \nabla^{'L_Y})
		-\td_G(\nabla^{TY}, \nabla^{L_Y})\right)\wedge \rho
		-\widetilde{\mathrm{FLI}}_G\left(\nabla^{'TZ}, \nabla^{'L_Z}, \nabla^{'TY,TX}, \nabla^{'L_Z}\right)
		\\
		\left.+\widetilde{\mathrm{FLI}}_G\left(\nabla^{TZ}, \nabla^{L_Z}, \nabla^{TY,TX}, \nabla^{L_Z}\right)
		+\int_{Y,G} (\sigma_Y'-\sigma_Y)\wedge (\mathrm{FLI}_G(\mF_X)-d \rho)\right]
		\\
		=\left[\mF_Z'+\mF_Z^{\mathrm{op}},\, \int_{Y,G} d\,\widetilde{\td}_G(\nabla^{TY}, \nabla^{L_Y}, \nabla^{'TY}, \nabla^{'L_Y})
		\wedge \rho\right.
		\\
		- \int_{Y,G} \widetilde{\td}_G(\nabla^{TY}, \nabla^{L_Y}, \nabla^{'TY}, \nabla^{'L_Y})
		\wedge d\rho
		+\int_{Y,G} \,\widetilde{\td}_G(\nabla^{TY}, \nabla^{L_Y}, \nabla^{'TY}, \nabla^{'L_Y} )\, \mathrm{FLI}_G(\mF_X)
		\\
		+\left.\widetilde{\mathrm{FLI}}_G\left(\nabla^{TZ}, \nabla^{L_Z}, \nabla^{'TZ}, \nabla^{'L_Z}\right)
		-\widetilde{\mathrm{FLI}}_G\left(\nabla^{TY,TX}, \nabla^{L_Z}, \nabla^{'TY,TX}, \nabla^{'L_Z}\right)\right]
		\\
		=[\mF_Z'+\mF_Z^{\mathrm{op}}, \widetilde{\mathrm{FLI}}_G\left(\mF_Z, \mF_Z'\right)].
		\end{multline}
		
		By Proposition \ref{d006} (iii) and Lemma \ref{d089}, there exists a perturbation operator $A$ 
		 with respect to $D(\mF_Z'+\mF_Z^{\mathrm{op}})$. By Theorem \ref{d176} and (\ref{d141}), there exists $x\in K_G^*(B)$ such that 
		\begin{align}\label{d095}
		\widetilde{\mathrm{FLI}}_G\left(\mF_Z, \mF_Z'\right)=\widetilde{\mathrm{FLI}}_G\left(\mF_Z+\mF_Z^{\mathrm{op}}, \mF_Z'+\mF_Z^{\mathrm{op}}\right)
		=-\widetilde{\eta}_G(\mF_Z'+\mF_Z^{\mathrm{op}}, A)+\ch_G(x).
		\end{align}
		Following the same process in (\ref{d087})-(\ref{d200}), we have $\widehat{\pi}_Y'!([\mF_X, \rho])=\widehat{\pi}_Y!([\mF_X, \rho])$. 
		
		The proof of Lemma \ref{d090} is completed.
	\end{proof}
		
	We now discuss the functoriality of the push-forward maps with respect to the composition of fiber bundles.
	Let $\pi_Y: V\rightarrow B$ with fibers $Y$ be as in the above subsection together with a representative of an
	equivariant differential K-orientation $\widehat{o}_{Y}=(T_{\pi_Y}^HV, g^{TY}, \nabla^{L_Y}, \sigma_Y)$.
	Let $\pi_U:B\rightarrow S$ be another equivariant smooth surjective proper submersion with compact oriented fibers $U$ together with a representative of an
	equivariant differential K-orientation $\widehat{o}_{U}=(T_{\pi_U}^HB, g^{TU}, \nabla^{L_U}, \sigma_U)$.
	
	Let $\pi_A:=\pi_U \circ \pi_Y:V\rightarrow S$ be the composition of two submersions with fibers $A$. Let $T_{\pi_A}^HV$ be a
	horizontal subbundle associated with $\pi_A$. We assume that $T_{\pi_A}^HV\subset T_{\pi_Y}^HV$.
	Set $g^{TA}=\pi_Y^* g^{TU}\oplus g^{TY}$, $\nabla^{L_A}=\pi_Y^*\nabla^{L_U}\otimes \nabla^{L_Y}$.
	
	\begin{defn}\label{d098}(Compare with \cite[Definition 3.21]{BS09})
		We define $\widehat{o}_A=\widehat{o}_U\circ \widehat{o}_Y$ by
		\begin{align}\label{d099}
		\widehat{o}_A:=(T_{\pi_A}^HV, g^{TA}, \nabla^{L_A}, \sigma_A),
		\end{align}
		where
		\begin{multline}\label{d100}
		\sigma_A:=\sigma_Y\wedge \pi_Y^*\td_G(\nabla^{TU}, \nabla^{L_U})+\td_G(\nabla^{TY}, \nabla^{L_Y})\wedge \pi_Y^* \sigma_U
		\\
		+\widetilde{\td}_G(\nabla^{TA}, \nabla^{L_A}, \nabla^{TU,TY}, \nabla^{L_A})- d\sigma_Y\wedge \pi_Y^*\sigma_U.
		\end{multline}
	\end{defn}
		
	\begin{thm}\label{d091}(Compare with \cite[Theorem 3.23]{BS09})
		We have the equality of homomorphisms $\widehat{K}_G^*(V)\rightarrow \widehat{K}_G^*(S)$
		\begin{align}\label{d092}
		\widehat{\pi}_A!=\widehat{\pi}_U!\circ\widehat{\pi}_Y!.
		\end{align}
	\end{thm}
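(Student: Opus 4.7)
The plan is to compute both sides of (\ref{d092}) on an arbitrary cycle $(\mF_X,\rho)\in\widehat{\mathrm{IC}}_G^{*}(V)$ and show that they are paired in the sense of Definition \ref{d049}. Write $\pi_Z=\pi_Y\circ\pi_X\colon W\to B$ and $\pi_D=\pi_U\circ\pi_Z\colon W\to S$ for the underlying compositions, so that both $\widehat{\pi}_A!$ and $\widehat{\pi}_U!\circ\widehat{\pi}_Y!$ produce geometric families over $S$ with total space $W$ and fiber equal to the fiber $A$ of $\pi_A$. By Theorem \ref{d157} the two resulting families, call them $\mF_{A,\mathrm{dir}}$ and $\mF_{A,\mathrm{it}}$, have the same topological structure in the sense of Definition \ref{d125}; they differ only in horizontal distribution, vertical metric, and the connection on $L_A$. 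By Lemma \ref{d090} I may modify the representative $\widehat{o}_A$ by Chern--Simons corrections without changing $\widehat{\pi}_A!$, so it suffices to work with any convenient choice of geometric data.

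Next I expand both differential form parts. For the iterated push-forward, apply (\ref{d082}) to $\mF_X$ using $\widehat{o}_Y$ to get $[\mF_Z,\rho_1]$, and then apply (\ref{d082}) to $\mF_Z$ using $\widehat{o}_U$ to get $[\mF_{A,\mathrm{it}},\rho_{\mathrm{it}}]$. For the direct push-forward, apply (\ref{d082}) to $\mF_X$ using $\widehat{o}_A=\widehat{o}_U\circ\widehat{o}_Y$ from Definition \ref{d098} to get $[\mF_{A,\mathrm{dir}},\rho_{\mathrm{dir}}]$. Then I would separate $\rho_{\mathrm{it}}$ and $\rho_{\mathrm{dir}}$ into three matching groups: the $\rho$-linear part, the $\sigma$-linear part (where $d\rho_1$ and $\mathrm{FLI}_G(\mF_Z)$ are rewritten by applying $d$ to (\ref{d082}) and using (\ref{d182}), (\ref{d078})), and the purely characteristic $\widetilde{\mathrm{FLI}}_G$/Chern--Simons parts. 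The $\rho$-linear piece matches by multiplicativity $\td_G(\nabla^{TU}\oplus\pi_Y^*\nabla^{TY})=\td_G(\nabla^{TU})\wedge \td_G(\nabla^{TY})$ applied under $\int_{A,G}=\int_{U,G}\int_{Y,G}$; the $\sigma$-linear piece matches precisely because the definition of $\sigma_A$ in (\ref{d100}) is set up to absorb the $\sigma_Y\wedge\td_G(\nabla^{TU})$, $\td_G(\nabla^{TY})\wedge\sigma_U$, and $-d\sigma_Y\wedge\sigma_U$ terms produced by the expansion.

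The main obstacle is the remaining characteristic/Chern--Simons part. Here three Chern--Simons-type objects on $TD=TU\oplus TY\oplus TX$ must be reconciled: the direct term $\widetilde{\mathrm{FLI}}_G(\nabla^{TD},\nabla^{L_D},\nabla^{TA,TX},\nabla^{L_D})$, the iterated term $\widetilde{\mathrm{FLI}}_G(\nabla^{TD},\nabla^{L_D},\nabla^{TU,TZ},\nabla^{L_D})$ together with the pullback under $\int_{U,G}\td_G(\nabla^{TU},\nabla^{L_U})$ of $\widetilde{\mathrm{FLI}}_G(\nabla^{TZ},\nabla^{L_Z},\nabla^{TY,TX},\nabla^{L_Z})$, and the $\widetilde{\td}_G(\nabla^{TA},\nabla^{L_A},\nabla^{TU,TY},\nabla^{L_A})$ term hidden inside $\sigma_A$. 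These are all equivariant Chern--Simons forms for different Euclidean connections on the same bundle, so by the transitivity property of Chern--Simons forms used in (\ref{e01127}) and a Fubini argument along $D^g=U^g\cup Y^g\cup X^g$, their combination becomes a single Chern--Simons form comparing $\nabla^{TD}$ with a connection deforming to $\nabla^{TA,TX}$, modulo exact forms on $S$.

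At this stage $\rho_{\mathrm{dir}}-\rho_{\mathrm{it}}$ equals a Chern--Simons correction that arises purely from the geometric data discrepancy between $\mF_{A,\mathrm{dir}}$ and $\mF_{A,\mathrm{it}}$. By the anomaly formula (Theorem \ref{d176}) applied to $\mF_{A,\mathrm{it}}+\mF_{A,\mathrm{dir}}^{\mathrm{op}}$, this correction is congruent, modulo exact forms, to $\widetilde{\eta}_G(\mF_{A,\mathrm{it}}+\mF_{A,\mathrm{dir}}^{\mathrm{op}},A)+\ch_G(x)$ for some perturbation $A$ and some $x\in K_G^*(B)$, where Lemma \ref{d089} and Remark \ref{d134} ensure that a perturbation exists. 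Using Proposition \ref{d123} to realize $x$ as $[P-Q]$ for a pair of equivariant spectral sections with respect to $D(\mF)$ for some auxiliary $\mF$, and absorbing $\mF+\mF^{\mathrm{op}}$ into the cycle as in (\ref{d087})--(\ref{d200}) of the proof of Theorem \ref{d083}, the $\ch_G(x)$-term is rewritten as a genuine delocalized eta form. The result is that $[\mF_{A,\mathrm{dir}},\rho_{\mathrm{dir}}]$ and $[\mF_{A,\mathrm{it}},\rho_{\mathrm{it}}]$ are paired, which gives (\ref{d092}).
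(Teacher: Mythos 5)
Your plan and the paper's proof diverge in an interesting way. The paper treats this as a \emph{direct algebraic verification}: one applies (\ref{d082}) twice to get $[\mF_D,\rho_{\mathrm{it}}]$, applies (\ref{d082}) once with the composite orientation to get $[\mF_D,\rho_{\mathrm{dir}}]$, and then checks by hand that $\rho_{\mathrm{dir}}=\rho_{\mathrm{it}}$ modulo exact forms, using exactly the multiplicativity of $\td_G$, Fubini for $\int_{A,G}=\int_{U,G}\circ\int_{Y,G}$, the transitivity of Chern--Simons forms, and the specific form of $\sigma_A$ in (\ref{d100}). In particular the geometric families $\mF_{A,\mathrm{dir}}$ and $\mF_{A,\mathrm{it}}$ can be chosen \emph{literally isomorphic}: the vertical metric $g^{TD}$ is $\pi_Z^*g^{TU}\oplus\pi_X^*g^{TY}\oplus g^{TX}$ in both constructions, the connection $\nabla^{L_D}$ is $\pi_Z^*\nabla^{L_U}\otimes 1\otimes 1+1\otimes\pi_X^*\nabla^{L_Y}\otimes 1+1\otimes 1\otimes\nabla^{L_X}$ in both, and the only residual freedom is the auxiliary horizontal distribution $T^H_{\pi_D}W$, which can be chosen compatibly since $T^H_{\pi_D,\mathrm{it}}W\subset T^H_{\pi_Z}W\subset T^H_{\pi_X}W$ and the direct construction also just demands $T^H_{\pi_D,\mathrm{dir}}W\subset T^H_{\pi_X}W$. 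With this choice the underlying geometric families coincide and the whole argument is a chase of differential forms, with no dynamical input.

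Your plan instead stops at the point where $\rho_{\mathrm{dir}}-\rho_{\mathrm{it}}$ is reduced to a Chern--Simons correction, interprets it as coming from a geometric discrepancy, and then invokes the anomaly formula (Theorem \ref{d176}) together with the $\ch_G(x)$-to-eta-form machinery from (\ref{d087})--(\ref{d200}) to conclude pairing. That route would work, but it is unnecessary overhead here, and there is one imprecision worth flagging: you assert the two families differ in vertical metric and $L_A$-connection, but as noted above these are forced to agree; only $T^H_{\pi_D}W$ is genuinely free, and even it can be matched. Consequently, if the Chern--Simons reconciliation is done carefully, the residue you propose to feed into Theorem \ref{d176} should already vanish modulo exact forms, and invoking the anomaly formula with its spectral-flow term and the Proposition \ref{d123} realization of $x=[P-Q]$ is redundant. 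What your approach would buy, if one really could not match the geometric data, is robustness; but in the present setting the paper's direct calculation is both simpler and complete, and your proposal should be trimmed to it.
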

	\begin{proof}
		The topological part of Theorem \ref{d091} is just Theorem \ref{d194} and
		the differential part follows from a direct calculation using (\ref{d082}) and (\ref{d100}).
	\end{proof}
	
	\subsection{Cup product}\label{s0303}
	
	In this subsection, we construct the cup product in equivariant differential K-theory in our model as in \cite{BS09,BS13} and prove the desired properties.

		Let $f: B_1\rightarrow B_2$ be a $G$-equivariant smooth map. 
		We define the induced homomorphism 
		$f^*:\widehat{K}_{G}^*(B_2)\to \widehat{K}_{G}^*(B_1)$
		as follows.
		For $\mF
	\in \mathrm{F}_G^*(B_2)$, in order to define $f^*\mF
	$, as remarked in Section \ref{s0102}, we need take care with the the pullback of the horizontal subbundle. Let $F$ be the natural map from $f^*W$ to $W$. We choose the new horizontal subbundle $ T_{f^*\pi}^H(f^*W)$ by the condition that $dF( T_{f^*\pi}^H(f^*W))\subseteq  T_{\pi}^H(W)$. Note that 
	the chosen of the new horizontal subbundle is not unique. If $A$ is a perturbation operator with respect to $D(\mF)$, then $f^*A$ is a perturbation operator with respect to $D(f^*\mF)$. Moreover, from Definition \ref{d017}, we have
	\begin{align}\label{d149}
	\widetilde{\eta}_G(f^*\mF, f^*A)=f^*\widetilde{\eta}_G(\mF, A).
	\end{align}
	By Proposition \ref{d022} and Definition
	\ref{d049}, if $\mF_1$, $\mF_2\in \mathrm{F}_G^*(B)$ be the 
	pullbacks of $\mF$ associated with distinct horizontal 
	subbundles, $(\mF_1, 0)\sim (\mF_2,0)$. So we obtain
	 a well defined pullback map
	 \begin{align}
	 f^*:\widehat{K}_{G}^*(B_2)\rightarrow \widehat{K}_{G}^*(B_1).
	 \end{align} 
	Evidently, $\mathrm{Id}_B^*=\mathrm{Id}_{\widehat{K}_G(B)}$.
	Let $f':B_0\rightarrow B_1$ be another equivariant smooth map. We could get
	\begin{align}
	f^{'*}f^*=(f\circ f')^*:\widehat{K}_{G}^*(B_2)\rightarrow \widehat{K}_{G}^*(B_0).
	\end{align}
%	$f^{'*}f^*=(f\circ f')^*:\widehat{K}_{G}^*(B_2)\rightarrow \widehat{K}_{G}^*(B_0)$. 

	Let $[\mF,\rho]\in \widehat{K}_G^i(B)$ and $[\mF',\rho']\in \widehat{K}_G^*(B)$, where $i=0,1$.
We define (compare with \cite[Definition 4.1]{BS09})
\begin{align}
[\mF,\rho]\cup[\mF',\rho']:=[\mF\times_B\mF', (-1)^i\mathrm{FLI}_G(\mF)\wedge \rho'+\rho\wedge \mathrm{FLI}_G(\mF')-(-1)^id\rho\wedge \rho'].
\end{align}
	It is obvious that the product is natural with respect to pullbacks. 
	
	\begin{prop}(Compare with \cite[Propositions 4.2, 4.5]{BS09})
		(i) The product is well defined. It turns $B\mapsto \widehat{K}_G^*(B)$ into a contravariant functor from compact smooth $G$-manifolds with finite stablizers to unital graded commutative rings. 
		The unit is simply given by $[\mF,0]$, where $\mF$ is the equivariant geometric family in Example \ref{d129} a) such that $E_+$ is 1 dimensional trivial representation and $E_-=0$.
		
		(ii) The product is associative.
		
		(iii) Let $\pi_U:B\rightarrow S$ be an equivariant smooth proper submersion with oriented fibers and an equivariant differential K-orientation. For $x\in \widehat{K}_G^*(B)$ and $y\in \widehat{K}_G^*(S)$, we have
		\begin{align}
		\widehat{\pi}_U!(\pi_U^*y\cup x)=y\cup \widehat{\pi}_U!(x).
		\end{align}		
	\end{prop}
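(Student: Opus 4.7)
The plan is to treat the three parts in turn, with well-definedness of the product (part (i)) as the principal obstacle; the other assertions are essentially algebraic once the product is defined.

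For (i), it suffices to check compatibility with the pairing relation in one factor: if $(\mF,\rho)$ is paired with $(\mF_1,\rho_1)$ via a perturbation $A$ of $D(\mF+\mF_1^{\mathrm{op}})$ satisfying $\rho-\rho_1=\widetilde{\eta}_G(\mF+\mF_1^{\mathrm{op}},A)$, I will show $[\mF\cup\mF',\rho\cup\rho']=[\mF_1\cup\mF',\rho_1\cup\rho']$. Since $d(\rho-\rho_1)=\mathrm{FLI}_G(\mF)-\mathrm{FLI}_G(\mF_1)$, the $\mathrm{FLI}\wedge\rho'$ and $d\rho\wedge\rho'$ contributions in the cup formula cancel between the two cycles and the difference reduces to $\widetilde{\eta}_G(\mF+\mF_1^{\mathrm{op}},A)\wedge\mathrm{FLI}_G(\mF')$. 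By the delocalized form of Theorem \ref{d198} applied to $(\mF+\mF_1^{\mathrm{op}})\cup\mF'\simeq\mF\cup\mF'+(\mF_1\cup\mF')^{\mathrm{op}}$, there exist a perturbation $A''$ and some $x\in K_G^*(B)$ with
\begin{align*}
\widetilde{\eta}_G((\mF+\mF_1^{\mathrm{op}})\cup\mF',A'')=\widetilde{\eta}_G(\mF+\mF_1^{\mathrm{op}},A)\wedge\mathrm{FLI}_G(\mF')+\ch_G(x).
\end{align*}
The hard part is absorbing the discrepancy $\ch_G(x)$. I will invoke Proposition \ref{d123} to realize $x=[P-Q]$ on an auxiliary equivariant geometric family $\mathcal{G}$, and then stabilize by $\mathcal{G}+\mathcal{G}^{\mathrm{op}}$ --- which leaves the class unchanged by (\ref{d141}) --- and apply Lemma \ref{d106} together with (\ref{d140}) to cancel $\ch_G(x)$ against the new eta-form contribution, exactly as in the closing step of the proof of Theorem \ref{d083}. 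A symmetric argument handles variation in the second slot.

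The remaining ring-theoretic content of (i) is routine. Functoriality is immediate from naturality of the geometric data and of the equivariant eta form. The unit is $[\mF_0,0]$ with $\mF_0$ the point-fiber family of Example \ref{d129}(a) having $E_+$ the trivial $1$-dimensional representation and $E_-=0$: then $\mathrm{FLI}_G(\mF_0)=1$, $\mF_0\cup\mF'\simeq\mF'$, and direct substitution in the cup formula gives $[\mF_0,0]\cup[\mF',\rho']=[\mF',\rho']$. Graded commutativity is the Koszul identity $\rho\cup\rho'-(-1)^{ij}\rho'\cup\rho\equiv 0\pmod{\operatorname{Im}d}$, a short calculation from the cup formula using $|\rho|=1-i$ and $|\mathrm{FLI}_G(\mF)|\equiv i\bmod 2$, combined with the canonical isomorphism $W\times_BW'\simeq W'\times_BW$ of Section 1.1. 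Part (ii) is then a routine algebraic associativity check for the cup formula, the geometric side being automatic from associativity of the fiber product.

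For (iii), I would expand both sides via (\ref{d082}) and the cup formula. The underlying geometric family of $\widehat{\pi}_U!(\pi_U^*y\cup x)$ is the push-forward of $\pi_U^*\mF'\cup\mF$ along $\pi_U$, which canonically agrees with $\mF'\cup\mF_Z$ where $\mF_Z=\widehat{\pi}_U!(\mF)$; on the topological level this is the projection formula already built into Theorem \ref{d157}. Matching the differential-form parts term by term reduces to the fiber-integration projection formula $\int_{U,G}(\pi_U^*\alpha\wedge\beta)=\alpha\wedge\int_{U,G}\beta$, the multiplicativity $\mathrm{FLI}_G(\pi_U^*\mF'\cup\mF)=\pi_U^*\mathrm{FLI}_G(\mF')\wedge\mathrm{FLI}_G(\mF)$, and the naturality of $\widetilde{\mathrm{FLI}}_G$; any residual discrepancy between representatives of the same class is once again absorbed by the stabilization trick of (i).
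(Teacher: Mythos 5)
Your proposal is correct and takes essentially the same route as the paper: the paper dispatches well-definedness by citing Theorem~\ref{d198} ("From Theorem \ref{d198} and a direct calculation, we could get the product is compatible with the equivalent relation"), and then absorbs the resulting $\ch_G(x)$ term by the realization-and-stabilization trick (Proposition~\ref{d123}, Theorem~\ref{d176}, and (\ref{d140})--(\ref{d141})) exactly as in the proof of Theorem~\ref{d083}, while the remaining ring axioms and the projection formula (iii) are deferred to \cite{Bunke2009}; you spell out precisely those steps. The only small imprecision is that you cite Lemma~\ref{d106} (odd case only) for the cancellation of $\ch_G(x)$, whereas the general-parity statement needed is Theorem~\ref{d176}.
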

	\begin{proof}
	The product is obviously biadditive.

From Theorem \ref{d198} and a direct calculation, we could get the product is compatible with the equivalence relation in differential K-theory. 
	Other properties are the direct extension of the discussions in \cite[p47-50]{BS09}.	
	\end{proof}
		
	\begin{thm}\label{d031}(Compare with \cite[Section 3,4]{BS09})
		The equivariant differential K-theory $\widehat{K}_{G}$ is a contravariant functor $B\rightarrow \widehat{K}_{G}(B)$
		from the category of compact smooth $G$-manifolds with finite stabilizers
		 to unital $\Z_2$-graded commutative rings together with
		well-defined transformations
		
		(1) $R:\widehat{K}_{G}^*(B)\rightarrow \Omega_{deloc, G, cl}^{*}(B, \R)$ (curvature);
		
		(2) $I: \widehat{K}^*_G(B)\rightarrow K^*_G(B)$ (underlying $\mathrm{K}_G$-group);
		
		(3) $a: \Omega_{deloc, G}^*(B, \R)/\Im \,d\rightarrow \widehat{K}_G(X)$ (action of forms),
		
		where  $\Omega_{deloc, G, cl}^*(B, \R)$ denotes the set of closed delocalized differential forms, such that

		(1) the following diagram commutes
		
		\begin{center}\label{d032}
			\begin{tikzpicture}[>=angle 90]
			\matrix(a)[matrix of math nodes,
			row sep=2em, column sep=2.5em,
			text height=1.5ex, text depth=0.25ex]
			{\widehat{K}^*_G(B) & K^*_G(B)\\
				\Omega_{deloc, G, cl}^{*}(B, \R)& H_{deloc, G}^{*}(B, \R);\\};
			\path[->](a-1-1) edge node[above]{\footnotesize{$I$}} (a-1-2);
			\path[->](a-1-2) edge node[right]{\footnotesize{$\ch_{G}$}} (a-2-2);
			\path[->](a-1-1) edge node[right]{\footnotesize{$R$}} (a-2-1);
			\path[->](a-2-1) edge node[above]{\tiny{de Rham}} (a-2-2);
			%\path[->](a-1-3) edge node[left]{\footnotesize{$\pi_1$}} (a-2-3);
			%\path[->](a-2-3) edge {\footnotesize{$\pi_2$}} (a-2-4);
			%\path[->](a-1-3) edge node[above]{\footnotesize{$\pi_3$}} (a-2-4);
			\end{tikzpicture}
		\end{center}
				
		(2)
		\begin{align}\label{d033}
		R\circ a=d;
		\end{align}
		
		(3) $a$ is of degree 1;
				
		(4) for $x,y\in \widehat{K}_G^*(B)$ and $\alpha\in \Omega_{deloc, G}^*(B,\R)/\Im d$, we have
		\begin{align}\label{d203} 
		R(x\cup y)=R(x)\wedge R(y),\quad I(x\cup y)=I(x)\cup I(y), \quad a(\alpha)\cup x=a(\alpha\wedge R(x));
		\end{align}
		
		(5) the following sequence is exact:
		\begin{align}\label{d034}
		K_G^{*-1}(B)\overset{\footnotesize{\ch_{G}}}{\longrightarrow} \Omega^{*-1}_{deloc, G}(B, \R)/\Im\, d
		\overset{\footnotesize{a}}{\longrightarrow} \widehat{K}_G^*(B) \overset{\footnotesize{I}}{\longrightarrow} K_G^*(B)
		\longrightarrow 0.
		\end{align}		
	\end{thm}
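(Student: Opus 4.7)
The plan is to define the three transformations directly on cycles, verify that they descend to $\widehat{K}_G^*(B)$, and then deduce the listed properties from the results of Section 2. On a cycle $(\mF,\rho)$, set $R([\mF,\rho]) := \mathrm{FLI}_G(\mF) - d\rho$, $I([\mF,\rho]) := \ind(D(\mF))$, and $a(\alpha) := [\emptyset, -\alpha]$, where $\emptyset$ denotes the empty (or zero-dimensional trivial) equivariant geometric family so that $\ind D(\emptyset) = 0$ and $\mathrm{FLI}_G(\emptyset) = 0$. Closedness of $R$ is immediate from closedness of the characteristic form $\mathrm{FLI}_G(\mF)$, while well-definedness of $R$ and $I$ under the pairing relation follows at once from (\ref{d019}): if $\rho - \rho' = \widetilde{\eta}_G(\mF + \mF^{'\mathrm{op}}, A)$ then $d(\rho - \rho') = \mathrm{FLI}_G(\mF) - \mathrm{FLI}_G(\mF')$, and the two indices coincide by the definition of paired. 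Diagram (1) is the content of (\ref{d062}); the identity $R \circ a = d$ reduces to $R([\emptyset, -\alpha]) = d\alpha$; the degree assertion for $a$ is built into the definition; and the three multiplicativity relations in (\ref{d203}) are checked on representatives using $\mathrm{FLI}_G(\mF \cup \mF') = \mathrm{FLI}_G(\mF) \wedge \mathrm{FLI}_G(\mF')$, equation (\ref{d170}), and a Leibniz-rule computation modulo $\Im \, d$.

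Surjectivity of $I$ in the exact sequence (\ref{d034}) is Proposition \ref{d128}, and $I \circ a = 0$ is clear. For $\ker I \subseteq \Im a$: if $\ind(D(\mF)) = 0$, Proposition \ref{d006}(i) together with Remark \ref{d134} furnishes a perturbation operator $A$ with respect to $D(\mF)$, and the cycle $(\mF, \widetilde{\eta}_G(\mF,A))$ is directly paired with $(\emptyset, 0)$ (indices match and $\mF + \emptyset^{\mathrm{op}} = \mF$); combining this with the identity $[\mF,\rho] = [\mF, \widetilde{\eta}_G(\mF, A)] + a(\widetilde{\eta}_G(\mF, A) - \rho)$ gives $[\mF, \rho] = a(\widetilde{\eta}_G(\mF, A) - \rho) \in \Im a$. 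For $\Im \ch_G \subseteq \ker a$: given $y \in K_G^{*-1}(B)$, Proposition \ref{d123} produces $\mF$ with equivariant spectral sections $P, Q$ satisfying $[P-Q] = y$, and Theorem \ref{d176} yields $\widetilde{\eta}_G(\mF, A_P) - \widetilde{\eta}_G(\mF, A_Q) = \ch_G(y)$ modulo exact forms. Both cycles $(\mF, \widetilde{\eta}_G(\mF, A_{P/Q}))$ vanish in $\widehat{K}_G^*(B)$ by the previous step, so their difference $[\mF + \mF^{\mathrm{op}}, \ch_G(y)] = 0$; rewriting it as $[\mF+\mF^{\mathrm{op}}, 0] - a(\ch_G(y))$ together with the vanishing of $[\mF+\mF^{\mathrm{op}}, 0]$ (an immediate consequence of (\ref{d141}) and direct pairing with $(\emptyset, 0)$) yields $a(\ch_G(y)) = 0$.

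The most delicate inclusion is $\ker a \subseteq \Im \ch_G$. Suppose $a(\alpha) = [\emptyset, -\alpha] = 0$ in $\widehat{K}_G^*(B)$. By the group-completion construction there exists a cycle $(\mF_0, \rho_0)$ such that $(\mF_0, \rho_0 - \alpha) \sim (\mF_0, \rho_0)$. Any chain of pairings between these representatives can be compressed to a single pairing after absorbing the intermediate cycles into auxiliary terms of the form $(\mF' + \mF^{'\mathrm{op}}, 0)$, each of which represents zero in $\widehat{K}_G^*(B)$ by (\ref{d141}) and the previous paragraph; one may therefore assume a direct pairing. This produces a perturbation operator $A$ with $-\alpha \equiv \widetilde{\eta}_G(\mF_0 + \mF_0^{\mathrm{op}}, A)$ modulo exact forms. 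Comparing with a second perturbation $A_0$ satisfying $\widetilde{\eta}_G(\mF_0 + \mF_0^{\mathrm{op}}, A_0) = 0$ (from (\ref{d141})) and applying Theorem \ref{d176} identifies $\alpha$ with $\ch_G$ of the equivariant higher spectral flow between $(D + A, P)$ and $(D + A_0, P_0)$, modulo exact forms, which places $\alpha$ in $\Im \ch_G$. The main obstacle is the combinatorial bookkeeping of compressing arbitrary $\sim$-chains into a single pairing consistently with the group completion; the analytic content is fully supplied by Theorems \ref{d176} and \ref{d188} together with Proposition \ref{d123}, and the overall structure parallels the non-equivariant treatment in \cite[Sections 3-4]{Bunke2009}.
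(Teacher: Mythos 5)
Your proof follows essentially the same route as the paper: the same definitions of $R$, $I$, $a$ on cycles, well-definedness from $d\widetilde{\eta}_G = \mathrm{FLI}_G$, the commutative diagram from \eqref{d062}, surjectivity of $I$ from Proposition \ref{d128}, and the exactness arguments hinging on Proposition \ref{d123} and Theorem \ref{d176}. Two points need attention, one small and one substantive.

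First, in the inclusion $\ker I \subseteq \Im a$, you invoke ``Proposition \ref{d006}(i) together with Remark \ref{d134}'' to produce a perturbation operator $A$ with respect to $D(\mF)$ alone. Remark \ref{d134} only asserts existence of a spectral section for $\mF + \mF^{\mathrm{op}}$, not for $\mF$; it does not help if all fibers of $\mF$ are zero-dimensional, which is precisely the case Proposition \ref{d006}(i) excludes. The paper's actual fix is Example \ref{d129} b): replace $\mF$ by the $\sim$-equivalent cup product $\mF\cup\mF'$ with $\mF'$ the $S^2$-bundle of index one, so at least one component has positive fiber dimension and Proposition \ref{d006}(i) then applies directly.

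Second, in $\ker a \subseteq \Im\ch_G$ you explicitly flag the ``combinatorial bookkeeping of compressing arbitrary $\sim$-chains into a single pairing'' as the main obstacle, and the sketch as stated (``produces a perturbation operator $A$ with $-\alpha\equiv\widetilde{\eta}_G(\mF_0+\mF_0^{\mathrm{op}}, A)$'') is not literally correct after one step of compression: summing the pairing identities over a length-$r$ chain gives $-\alpha = \widetilde{\eta}_G\bigl(\bigsqcup_{i=1}^r(\mF_{i-1}+\mF_i^{\mathrm{op}}),\, A_1\sqcup\cdots\sqcup A_r\bigr)$, whose underlying family regroups (since $\mF_r=\mF_0$) as $(\mF_0+\mF_0^{\mathrm{op}})+\sum_{i=1}^{r-1}(\mF_i+\mF_i^{\mathrm{op}})$, not merely $\mF_0+\mF_0^{\mathrm{op}}$. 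The paper does not compress the chain but instead telescopes $\sum(\rho_{i-1}-\rho_i)$, regroups the family as above, compares with the reference perturbations $A_i'$ of \eqref{d141} for which the eta form vanishes, and applies Theorem \ref{d176} plus \eqref{d145} to identify the difference with a Chern character. This is indeed only bookkeeping and your stated analytic inputs (Theorem \ref{d176}, Proposition \ref{d123}, \eqref{d141}) are the correct ones, but as written the compression claim would need to be replaced with the telescoping argument.
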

	\begin{proof}
		We define the natural transformation
		\begin{align}\label{d053}
		I:\widehat{K}_G^*(B) \rightarrow K_G^*(B)
		\end{align}
		by
		\begin{align}\label{d054}
		I([\mF, \rho]):=\ind(D(\mF)).
		\end{align}
		From Definition \ref{d049}, the transformation $I$ is well defined.
		
		Let $a$ be a parity-reversing natural transformation
		\begin{align}\label{d055}
		a: \Omega_{deloc, G}^{\mathrm{even/odd}}(B, \R)/\Im \,d\rightarrow \widehat{K}_G^{1/0}(B)
		\end{align}
		by
		\begin{align}\label{d056}
		a(\rho):=[\emptyset, -\rho],
		\end{align}
		where $\emptyset$ is the empty geometric family.
		
		We define a transformation
		\begin{align}\label{d057}
		R: \widehat{\mathrm{IC}}_G^*(B)\rightarrow \Omega_{deloc, G, cl}^*(B, \R)
		\end{align}
		by
		\begin{align}\label{d058}
		R((\mF, \rho)):=\mathrm{FLI}_G(\mF)-d\rho.
		\end{align}
		If $(\mF', \rho')$ is paired with $ (\mF, \rho)$,
		there exists a perturbation operator $A$ with respect to $D(\mF+\mF^{'\mathrm{op}})$, such that
		$\rho-\rho'=\widetilde{\eta}_G(\mF+\mF^{'\mathrm{op}}, A)$. From (\ref{d019}) and (\ref{d050}), we have
		\begin{multline}\label{d060}
		R((\mF, \rho))=\mathrm{FLI}_G(\mF)-d\rho=\mathrm{FLI}_G(\mF)-d\rho'-
		d\widetilde{\eta}_G(\mF+\mF^{'\mathrm{op}}, A)
		\\
		=\mathrm{FLI}_G(\mF)-d\rho'-\mathrm{FLI}_G(\mF)+\mathrm{FLI}_G(\mF')
		=R((\mF', \rho')).
		\end{multline}
		Since $R$ is additive, it descends to $\widehat{\mathrm{IC}}_G^*(B)/\sim$ and finally to the map $R: \widehat{K}_G^*(B)\rightarrow \Omega_{deloc, G, cl}^*(B, \R)$. Let $f: B_1\rightarrow B_2$ be a $G$-equivariant smooth map. It follows from $\mathrm{FLI}_G(f^*\mF)=f^*\mathrm{FLI}_G(\mF)$ that $R$ is natural. 
		
		From (\ref{d056}) and (\ref{d058}), we have
		\begin{align}\label{d061}
		R\circ a=d.
		\end{align}
		
		By (\ref{d062}), the diagram commutes.
		
		The formulas in (\ref{d203}) follow from  straight calculations using the definitions.
		
		At last, we prove the exactness of the sequence (\ref{d034}).
		
		The surjectivity of $I$
		follows from Proposition \ref{d128}.
		
		Next, we show the exactness at $\widehat{K}_G^*(B)$. It is obvious that $I\circ a=0$.
		For a cycle $(\mF, \rho)$, if $I([\mF, \rho])=0$, we have $\ind(D(\mF))=0$. 
		By Example \ref{d129} b), we could take $\mF$ such that at least one component of the fiber has the nonzero dimension.
		So there exists a perturbation operator $A$ with respect to  $D(\mF)$ from Proposition \ref{d006}.
		By (\ref{d050}) and (\ref{d056}), we have
		\begin{align}\label{d065}
		[\mF, \rho]=a(\widetilde{\eta}_G(\mF, A)-\rho).
		\end{align}
				
		Finally, we prove the exactness at $\Omega^{*-1}_{deloc, G, cl}(B, \R)/\Im\, d$.
		Following the same process in (\ref{d087})-(\ref{d200}),
		for any $x\in K_G^*(B)$, by (\ref{d056}),
		\begin{align}\label{d197}
		a\circ \ch_{G}(x)=(\emptyset, \tilde{\eta}_G(\mF, A_Q)-\tilde{\eta}_G(\mF, A_P))=[\mF, 0]-[\mF,0]=0.
		\end{align}
		
		If $a(\rho)=0$, for any equivariant geometric family $\mF$ with
		a perturbation operator $A$ with respect to $D(\mF)$, by Definition \ref{d049} and (\ref{d065}), we have
		\begin{align}\label{d067}
		[\mF, \tilde{\eta}_G(\mF, A)-\rho]=a(\rho)=0=[\mF, \tilde{\eta}_G(\mF, A)].
		\end{align}
		So by Definition \ref{d051}, there exists another cycle $(\mF',\rho')$, such that $(\mF+\mF', \rho'+\tilde{\eta}_G(\mF, A)-\rho)\sim(\mF+\mF', \rho'+\tilde{\eta}_G(\mF, A))$.
		Since $\sim$ is generated by "paired", we have the cycles $\{(\mF_i, \rho_i)\}_{0\leq i\leq r}$ such that for any $1\leq i\leq r$, $(\mF_i, \rho_i)$ is paired with $(\mF_{i-1}, \rho_{i-1})$, $(\mF_0, \rho_0)=(\mF+\mF', \rho'+\tilde{\eta}_G(\mF, A)-\rho)$ and $(\mF_r, \rho_r)=(\mF+\mF', \rho'+\tilde{\eta}_G(\mF, A))$. By Definition \ref{d049}, for any $1\leq i\leq r$, there exists a perturbation operator $A_i$ with respect to $D(\mF_{i-1}+\mF_i^{\mathrm{op}})$ such that
		$
		\rho_{i-1}-\rho_i=\tilde{\eta}_G(\mF_{i-1}+\mF_i^{\mathrm{op}}, A_i).
		$
		Let $A_i'$ ($0\leq i\leq r$) be the perturbation operator with respect to  $D(\mF_{i}+\mF_i^{\mathrm{op}})$ taken in (\ref{d141}). 
		Therefore, by Theorem \ref{d176}, (\ref{d145}) and (\ref{d141}), there exists $x\in K_G^*(B)$, such that
		\begin{multline}
		-\rho=\sum_{i=1}^r(\rho_{i-1}-\rho_i)=\tilde{\eta}_G(\mF_{0}+\mF_1^{\mathrm{op}}+\cdots +\mF_{r-1}+\mF_{r}^{\mathrm{op}}, A_1\sqcup_B\cdots\sqcup_B A_r)
		\\
		=\tilde{\eta}_G(\mF_{0}+\mF_r^{\mathrm{op}}+\cdots +\mF_{r-1}+\mF_{r-1}^{\mathrm{op}}, A_1\sqcup_B\cdots\sqcup_B A_r)
		\\
		-\tilde{\eta}_G(\mF_{0}+\mF_r^{\mathrm{op}}+\cdots +\mF_{r-1}+\mF_{r-1}^{\mathrm{op}}, A_0'\sqcup_B\cdots\sqcup_B A_{r-1}')
		=\ch_G(x).
		\end{multline}
				
		The proof of Theorem \ref{d031} is completed.	
	\end{proof}
	
	The direct extension of \cite[Proposition 3.19 and Lemma 3.20]{BS09} show that the pullback map and the exact sequence (\ref{d034}) are compatible with the push-forward maps. 	
		
	\begin{rem}
		If the group $G$ is trivial, all the models of differential K-theory are isomorphic (see e.g. \cite{BS12}). For equivariant case, the uniqueness is an open question.
	\end{rem}
	
	\subsection{Differential K-theory for orbifolds}
	
	%In this subsection, we discuss the model of differential K-theory for orbifolds.
	In \cite{BS13}, Bunke and Schick constructed the first model of
	the differential K-theory for orbifolds by using the language of stacks
	and proved the desired properties. It could be regarded as a model of the equivariant differential K-theory when the action has finite stabilizers.
	In the subsections above, inspired by the constructions in \cite{BS09,Or09},
	we construct the a model of the equivariant differential K-theory when the action has finite stabilizers. In this subsection, we will explain that this model
	could also be regarded as a model for orbifolds.

	Let $\mathcal{X}$ be a compact orbifold (effective orbifold in some literatures).
	There exist a compact smooth manifold $B$ and a compact Lie group $G$ such that
	$\mathcal{X}$ is diffeomorphic to a quotient for a smooth effective 
	$G$-action on $B$ with finite stabilizers (see \cite[Theorem 1.23]{ALR07}).
	
	Let $K_{orb}^0(\mathcal{X})$ be the orbifold K-group of the compact orbifold $\mathcal{X}$ defined as the Grothendieck ring of the equivalence classes of orbifold vector bundles over $\mathcal{X}$. Since $\mathcal{X}$ is an orbifold, $\mathcal{X}\times S^1$ is an orbifold. Moreover, $i: \mathcal{X}\rightarrow \mathcal{X}\times S^1$ is a morphism in the category of orbifolds. As in (\ref{d124}), we define the orbifold $K^1$ group $K_{orb}^1(\mathcal{X}):=\ker (i^*: K_{orb}^0(\mathcal{X}\times S^1)\rightarrow K_{orb}^0(\mathcal{X}))$.

	Let $p: B\rightarrow B/G=\mathcal{X}$ be the projection. Then from \cite[Proposition 3.6]{ALR07}, it induces an isomorphism
	$p^*: K_{orb}^*(\mathcal{X})\rightarrow K_G^*(B)$.
	Note that if the orbifold $\mathcal{X}$ can be presented in two different ways as a quotient, say $B'/G'\simeq \mathcal{X}\simeq B/G$, it shows that $K_{G'}^*(B')\simeq K_{orb}^*(\mathcal{X})\simeq K_G^*(B)$. 
	So we can consider the orbifold K-theory as a special case of the equivariant K-theory. 
	
	Furthermore, from the definition of the differential structure on orbifolds, we know that $\Omega_{deloc, G}^*(B,\R)/\Im d\simeq \Omega_{deloc, G'}^*(B',\R)/\Im d$. From the exact sequence in (\ref{d034}) and five lemma, we have
	\begin{align}
	\widehat{K}_{G'}^*(B')\simeq \widehat{K}_G^*(B).
	\end{align}
	Therefore, this model of equivariant differential K-theory for $G$-action
	with finite stabilizers could be regarded as a model of differential K-theory for orbifolds.

	\appendix
	%\addcontentsline{toc}{chapter}{}
	
	\section{Equivariant K-theory for smooth complex vector bundles}
	
	In this appendix, we show that for a compact Lie group $G$
	and a smooth compact manifold $B$ with a smooth $G$-action,
	$K_G^0(B)$
	in \cite{Se68} defined by $G$-equivariant 
	topological complex vector bundles could be studied using
	the $G$-equivariant smooth complex vector bundles.
	Although this is certainly 
	well-known (see e.g., \cite[(2.1)]{DM20}), we were
	unable to find an explicit proof in the literature. 
	We state it here for the completeness following the suggestion
	of a referee. In this appendix, all vector bundles are complex.
	
	For a representation $V$ of $G$, for $v\in V$, if $Gv$ generates
	a finite dimensional subspace of $V$, we say $v$ is a 
	$G$-finite vector in $V$.
	
	Let $E$ be a $G$-equivariant smooth vector bundle over $B$. 
	Take a Hermitian metric on $E$ and let $\|\cdot\|_{\cC^0(B,E)}$
	be the corresponding $\cC^0$-norm.
	For $s\in \cC^{\infty}(X,E)$, 
	$gs\in \cC^{\infty}(X,E)$. Thus for any $f\in \cC^{\infty}(G)$, 
	\begin{align}\label{eq:a04}
     s_f:=\int_Gf(g)gs\, dg\in \cC^{\infty}(B,E),
	\end{align}
	 where $dg$ is the Haar measure.
	Since $G$ acts continuously on $\cC^{\infty}(B,E)$,
	for any $\var>0$, there exists a neighborhood $U$ of the unity $e\in G$
	such that for any $g\in U$, $\|gs-s\|_{\cC^0(B,E)}<\var/2$. 
	Let $v\in \cC^{\infty}(G)$ be a non-negative function vanishing
	outside $U$ with $\int_G v(g)dg=1$. Then 
	$\|s_v-s\|_{\cC^0(B,E)}<\var/2$. 
	Let $M_s:=\|\int_G gs\, dg\|_{\cC^{0}(B,E)}$.
	From Peter-Weyl theorem,
	there exists a $G$-finite vector $u\in \cC^{\infty}(G)$, such that  $\|v-u\|_{\cC^0(G)}<\frac{\var}{2M_s}$.
	Thus, $\|s_v-s_u\|<\var/2$. Observe that
	$s_u$ is a $G$-finite vector in $\cC^{\infty}(B,E)$. We have the following lemma.

	\begin{lemma}\label{lem:a01}(cf. \cite[\S 2.16]{Mo61})	
	For $s\in \cC^{\infty}(B,E)$, for any $\var>0$, there exists
	a $G$-finite vector $s'\in \cC^{\infty}(B,E)$ such that 
	$\|s-s'\|_{\cC^{0}(B,E)}<\var$.
	\end{lemma}
	
%	Let $G$ be a compact Lie group.
%	Let $B$ be a smooth compact manifold with a smooth $G$-action.
	For a finite dimensional complex representation $M$ of $G$, we consider the 
	$G$-action on $B\times M$ given by 
	\begin{align}\label{eq:a02} 
	g(b,u)=(gb,gu),\quad \forall g\in G, b\in B, u\in M.
	\end{align}
	Thus
	$B\times M\rightarrow B$ is an equivariant smooth vector bundle over $B$.  In this case, a $G$-invariant Hermitian inner product on 
	$M$ forms a $G$-invariant Hermitian smooth metric on this vector bundle.
%	We denote this equivariant vector bundle by $E_M$. 
The following lemma extends \cite[Proposition 2.4]{Se68} to the
category of $G$-equivariant smooth vector bundles. 

\begin{prop}\label{prop:a02}
Let $E$ be a $G$-equivariant smooth vector bundle over $B$.
There exist a finite dimensional complex representation $M$ of $G$
and a $G$-equivariant smooth vector bundle $F$ over $B$ such that
$E\oplus F$ is isomorphic to $B\times M$ as $G$-equivariant smooth  vector bundles.
\end{prop} 
\begin{proof}
It suffices to find a equivariant smooth surjection from some $B\times M$ 
to $E$. Then $F$ is the orthogonal complement of $E$ in $B\times M$.

For any $b\in B$, we can choose a finite set $\sigma_b\subset
\cC^{\infty}(B,E)$, such that $\{s(b) \}_{s\in \sigma_b}$
spans $E_b$. From Lemma \ref{lem:a01}, we can choose $\sigma_b$
such that it
consists of $G$-finite vectors in $\cC^{\infty}(B,E)$. There exists a neighborhood of 
$b$, $U_b$, such that for any $x\in U_b$, $\{s(x) \}_{s\in \sigma_b}$ spans $E_x$. Suppose $U_{b_1},\cdots, U_{b_m}$ 
covers $B$. Let $\sigma=\cup_i \sigma_{b_i}$. Let 
$M$ be the finite dimensional subspace of $\cC^{\infty}(B,E)$
generated by $\sigma$. Then the evaluation map $B\times M
\to E$ is the required surjection. 
\end{proof}

\begin{lemma}\label{lem:a03} (Compare with \cite[Theorem 3.5]{Hi76})
For every $G$-equivariant topological vector bundle $E$ over $B$,
there exists a $G$-equivariant smooth vector bundle $E^s$ over $B$, which is
unique up to isomorphism of $G$-equivariant smooth vector bundles, such that
$E^s$ is isomorphic to $E$ as $G$-equivariant topological vector bundles.
\end{lemma}
\begin{proof}
By  \cite[Proposition 2.4]{Se68}, the $\cC^0$-version of Proposition \ref{prop:a02}, there exists a finite dimensional complex representation $M$ of $G$ and an equivariant embedding $i:E\to B\times M$. 
Let $Gr_{M,r}$ be the Grassmannian parameterizing all complex linear
subspaces of finite dimensional complex representation $M$ of given dimension $r$.
%It is a smooth manifold.
Since $G$ acts linearly on $M$, there is a induced smooth $G$-action on smooth manifold $Gr_{M,r}$.
Let $r$ be the rank of $E$. Define continuous map $h:B\to Gr_{M,r}$ by $h(b):=i(E_b)\in Gr_{M,r}$, $\forall b\in B$.
Since $i$ is equivariant, $h$ is an equivariant map. Let $\gamma_{M,r}$ be 
the universal bundle over $Gr_{M,r}$, which is an equivariant
smooth
vector bundle over $Gr_{M,r}$. Then
$h^*\gamma_{M,r}$ is isomorphic to $E$ as 
$G$-equivariant
topological vector bundles.

For equivariant continuous map $h:B\to Gr_{M,r}$, there exists an equivariant smooth map $h_G:B\to Gr_{M,r}$, which is $G$-homotopy to $h$
continuously (see e.g.,  \cite[Theorem VI.4.2]{Br72}).
%Let $F(t), t\in [0,1]$ be the continuous homotopy such that $F(0)=h_0$, $F(1)=h$.
%For $g\in G$, define $gh_0:B\to Gr_{M,r}$
%by $(gh_0)(x)=g(h_0(g^{-1}x))$, which is also a smooth map.
%We have that $gh=h$ is homotopy to $gh_0$ associated with $gF$. Let 
%$F_G=\int_GgFdv$,
%Let
%$h_G=\int_Ggh_0\,dg$. 
%Then $h_G$ is $G$-equivariant smooth and $G$-homotopy to $h$
%continuously. 
So $E^s:=h_G^*\gamma_{M,r}$ is a $G$-equivariant smooth vector bundle which is isomorphic to $ h^*\gamma_{M,r}\simeq E$ as $G$-equivariant topological vector bundles.

%Since $h, h_G:B\to Gr_{M,r}$ are $G$-homotopic $G$-maps, 
%the $G$-equivariant smooth vector bundle $h_G^*\gamma_{M,r}$
%is $G$-equivariant continuous isomorphic to $ h^*\gamma_{M,r}\simeq \xi$.
%Thus $\xi$ has an equivariant smooth structure.

For two $G$-equivariant smooth vector bundles $E_1^s$ and $E_2^s$,
which are isomorphic as $G$-equivariant topological vector bundles,
there exist $G$-equivariant smooth maps $h_i: B\to Gr_{M,r}$,
$i=1,2$,
such that $E_i^s=h_i^*\gamma_{M,r}$ and $h_1$ is $G$-homotopy to $h_2$ continuously.
Since $G$ is compact, $h_1$ is $G$-homotopy to $h_2$ smoothly
(see e.g., \cite[Corollary VI.4.3]{Br72}).
Thus $E_1^s$ is isomophic to $E_2^s$ as $G$-equivariant
smooth vector bundles.

The proof of Lemma \ref{lem:a03} is completed.
\end{proof}

\begin{prop}\label{prop:a04}
Let $K^0_{G,sm}(B)$ be the Grothendieck group of the $G$-equivariant  smooth 
vector bundles over $B$. We have
\begin{align}\label{eq:a01}
K^0_{G,sm}(B)\simeq K^0_G(B).
\end{align}
\end{prop}
\begin{proof}
Forgetting the smooth structure, we obtain a well-defined map
$A:K^0_{G,sm}(B)\to K^0_G(B)$. Let $\mathrm{Vect}_G(B)$ 
and $\mathrm{Vect}_{G,sm}(B)$ be the equivalence classes of 
$G$-equivariant topological and smooth vector bundles over $B$. Then 
Lemma \ref{lem:a03} induces a well-defined map
$\mathrm{Vect}_G(B)\to \mathrm{Vect}_{G,sm}(B)$.
For $E_1$, $E_2$ in $\mathrm{Vect}_G(B)$, if $[E_1]=[E_2]
\in K^0_G(B)$, there exists topological vector bundle
$F$ such that $E_1\oplus F$ is
isomorphic to $E_2\oplus F$ as $G$-equivariant
topological vector bundles. 
Let $E_1^s$, $E_2^s$ and $F^s$ be the corresponding $G$-equivariant smooth
vector bundles. Since  $E_1^s\oplus F^s$ is
isomorphic to $E_2^s\oplus F^s$ as $G$-equivariant
topological vector bundles, from the uniqueness in Lemma 
\ref{lem:a03}, they are isomorphic as $G$-equivariant
smooth vector bundles. Thus we get a well-defined 
map $B:K^0_G(B)\to K^0_{G,sm}(B)$. Easy to see that
$A\circ B$ and $B\circ A$ are all identity maps.

The proof of Proposition \ref{prop:a04} is completed.

\end{proof}

	\section{Equivariant family index for odd dimensional fibers}
	
%	In this appendix, we summarize some results
%	on the equivariant family index for odd dimensional fibers
%	and the equivariant version of the Fredholm theory.

In this appendix, we summarize some results on the equivariant
family index for odd dimensional fibers and the explanations
for $K_G^1(B)$ (cf. \cite{AS69,FHT11,MP97b}).

We consider the equivariant $\Z_2$-graded 
Hilbert bundle $\mE$ with fiber
$L^2(Z_b,E)$ over $b\in B$. From \cite[Lemma A.32]{FHT11},
there exists an equivariant embedding from $\mE$
to the equivariant trivial Hilbert bundle 
$B\times L^2(G)\otimes C(\R)\otimes H$, where 
$C(\R)$ is 
the complex Clifford algebra and 
$H$ is a separable Hilbert space. 
As in \cite[Definitions A.39 and A.40]{FHT11}, 
for any equivariant $\Z_2$-graded 
Hilbert space $\mH$, we denote by
$\mathrm{Fred}^{0}(\mH)$ the space of odd skew-adjoint 
equivariant Fredholm
operators $A$, for which $A^2+1$ is compact, 
topologized as a subspace of $B(\mH)\times K(\mH)$, where
$B(\mH)$ and $K(\mH)$ are the sets of bounded linear operators and compact operators on $\mH$ given the compact-open topology and 
the norm topology respectively.
Denote by $\mathrm{Fred}^1(\mH)$ the subspace of 
$\mathrm{Fred}^0(C(\R)\otimes \mH)$ consisting
of odd operators $A$, which supercommute with the action of $C(\R)$ and for
which the essential spectrum of $-\sqrt{-1}c(e)A$ 
contains both positive and negative eigenvalues,
where $c(e)$ is the basis element of $C(\R)$. 
By \cite[\S 3.5.4]{FHT11}, $K_G^1(B)$ is realized as
the space of $G$-homotopy classes of $G$-equivariant maps 
from $B$ to $\mathrm{Fred}^1(L^2(G)\otimes H)$:
\begin{align}\label{eq:2.03}
K_G^1(B)\simeq [B, \mathrm{Fred}^1(L^2(G)\otimes H)]_G.
\end{align}

Let $T=D(\mF)/(1+D(\mF)^2)^{1/2}$. Then $T$ is bounded,  $G$-equivariant
and $\ind(T)=\ind(D(\mF))\in K_G^1(B)$. 
Moreover $\sqrt{-1}T$ can be extended to an equivariant map from $B$ to
$\mathrm{Fred}^1(L^2(G)\otimes H)$
by taking the identity map on the complement of $\mE$
in $B\times L^2(G)\otimes C(\R)\otimes H$.
By \cite[\S 3.5.4 and Proposition A.41]{FHT11}, $\ind(D(\mF))=\ind(T)\in K_G^1(B)$ 
corresponds to the element of 
$K_G^0(B\times (0, \frac{1}{2}))\simeq[B\times (0, \frac{1}{2}), \mathrm{Fred'}^0(L^2(G)\otimes H)]_G$ given by 
		\begin{align}\label{eq:A.06}
D(\theta)=
\cos(2\pi\theta)+\sqrt{-1}T\sin(2\pi\theta),\quad \theta\in (0,\frac{1}{2}).
\end{align}
Here $\mathrm{Fred'}^0(L^2(G)\otimes H)$ consists of the elements in $\mathrm{Fred}^0(L^2(G)\otimes H)$ which is invertible outside a compact
set of the parameter space (see also \cite[p6]{AS69}, \cite[(3.1)]{MP97b}). By applying the natural inclusion
$K_G^0(B\times (0, \frac{1}{2}))\to K_G^0(B\times S^1)$, we obtain 
an element of $K_G^0(B\times S^1)$ which lies in the image of 
$j$ in (\ref{d124}), thus an element of $K_G^1(B)$.

	The following proposition is the equivariant 
	version of	\cite[Proposition 6]{MP97b}.  We prove it here using the notation
	in Example \ref{d129} d) for the completeness. 
	
	\begin{prop}\label{prop:A.01}
	For $\mF\in \mathrm{F}_G^1(B)$, there exists inclusion $i:B\rightarrow B\times S^1$ such that $i^*\ind(D(p_1^*\mF\times_{B\times S^1} p_2^*\mF^L))=0$. Moreover, as an element of $K_G^1(B)$, we have
	\begin{align}\label{eq:A.01}
	j\big(\ind(D(\mF))\big)=\ind(D(p_1^*\mF\times_{B\times S^1} p_2^*\mF^L)).
	\end{align}
	\end{prop}
	
	\begin{proof}
		In order to compare our definition with that in \cite{AS69}, we replace 
		the connection in (\ref{eq:1.26}) by $\nabla^L=d+2\pi (\theta-1/4)\sqrt{-1}dt$.
		Since from (\ref{eq:1.04}) and (\ref{eq:1.05}),
			\begin{align}\label{eq:A.02}
		D(p_1^*\mF\times_{B\times S^1} p_2^*\mF^L)=D(\mF)\otimes J+D(\mF^L)\otimes K,
		%		=
		%		\left(\begin{array}{cc}
		%		0 & D(\mF)+\sqrt{-1}D(\mF^L) \\
		%		D(\mF)-\sqrt{-1}D(\mF^L) & 0
		%		\end{array}
		%		\right).
		\end{align}
		the index in the right hand side of (\ref{eq:A.01}) does not vary in $K_G^0(B\times S^1)$ after the replacement. 		
		Then we could calculate that for $\theta\in [0,1)$, the eigenvalues of $D(\mF^L)$ are $\{\lambda_k=2\pi k+2\pi (\theta-\frac{1}{4})\}_{k\in \Z}$  and the eigenspace of $\lambda_k$ is one dimensional for any $k\in \Z$. Let $s$ be a local frame of $L$. The eigenfunction of $\lambda_k$ is $ v_k(t)=\exp(2\pi k\sqrt{-1}t)s$.
		From (\ref{eq:A.02}), 
		$D(p_1^*\mF\times_{B\times S^1} p_2^*\mF^L)^2=(D(\mF)^2+D(\mF^L)^2)\otimes Id_{\C^2}.$
		So it is invertible if and only if $\theta\neq 1/4$.
		Thus for inclusion $i:B\rightarrow B\times S^1$, $i(B)=B\times \{\frac{1}{2}\}$, 
		$i^*\ind(D(p_1^*\mF\times_{B\times S^1} p_2^*\mF^L))=0$.
		 
		%In the following, we only consider the chart $[0,1/2]\times S_t^1$. 		
		Fix $b\in B$. Since $\mS_{S^1\times Z}=\mS_{S^1}\otimes \mS_Z\otimes \C^2$, we have
		\begin{align}\label{eq:A.03}
		L^2(S_t^1\times Z_b, \mS_{S^1\times Z}\widehat{\otimes}L \widehat{\otimes} E)=\,'\bigoplus_{k\in \Z}\C v_k(t)\otimes L^2(Z_b, \mS_Z\widehat{\otimes} E)\otimes \C^2.
		\end{align}
		Here $\,'\oplus$ stands for the direct sum in the category of Hilbert spaces. 
		If $k\neq 0$, 
		$D(p_1^*\mF\times_{B\times S^1} p_2^*\mF^L)^2|_{\C v_k(t)\otimes L^2(Z_b, \mS_Z\widehat{\otimes} E)\otimes \C^2}>0.$
		Let
		$H_+=\C v_0(t)\otimes L^2(Z_b, \mS_Z\widehat{\otimes} E)\otimes (\C\oplus\{0\}), \, H_-=\C v_0(t)\otimes L^2(Z_b, \mS_Z\widehat{\otimes} E)\otimes (\{0\}\oplus\C).$
		Then
		\begin{align}\label{eq:A.04} 
	\ind(D(p_1^*\mF\times_{B\times S^1} p_2^*\mF^L))=\ind\left(D(p_1^*\mF\times_{B\times S^1} p_2^*\mF^L)_+:H_+\rightarrow H_-\right).
		\end{align}
		We define the isomorphisms 
		$\phi_+:H_+\rightarrow L^2(Z_b, \mS_Z\widehat{\otimes} E), \quad \phi_-:H_-\rightarrow L^2(Z_b, \mS_Z\widehat{\otimes} E),$	
		by
		$\phi_+(v_0(t)\otimes l\otimes (1,0))=l,\quad \phi_-(v_0(t)\otimes l\otimes (0,1))=l.$	
		Set
		$D_+:=\phi_-\circ D(p_1^*\mF\times_{B\times S^1} p_2^*\mF^L)_+\circ \phi_+^{-1}.$	
		Then on $L^2(Z_b, \mS_Z\widehat{\otimes} E)$, from (\ref{eq:1.04})
		and (\ref{eq:A.02}), we have
		\begin{align}\label{eq:A.05}
		D_+=\sqrt{-1}D(\mF)+\lambda_0(\theta).
		\end{align}	
%		Then from \cite[p6]{AS69},  the equivariant family index of 
%		the odd geometric family is defined by 
%		\begin{align}\label{eq:A.07}
%		\ind(D(\mF)):=j^{-1}(\ind(D(\theta))).
%		\end{align}
		From (\ref{eq:A.06}) and (\ref{eq:A.05}), 
		for $\theta\in (0,\frac{1}{2})$, we have
		\begin{multline}\label{eq:A.08}
		\ind(D(p_1^*\mF\times_{B\times S^1} p_2^*\mF^L))=\ind\left(\frac{D(p_1^*\mF\times_{B\times S^1} p_2^*\mF^L)_+}{\sqrt{1+D(p_1^*\mF\times_{B\times S^1} p_2^*\mF^L)^2}}:H_+\rightarrow H_-\right)\\
		=\ind\left(\frac{D_+}{\sqrt{1+\lambda_0(\theta)^2+D(\mF)^2}}\right)
		=\ind\left(\frac{\lambda_0(\theta)+\sqrt{-1}D(\mF)}{\sqrt{1+\lambda_0(\theta)^2+D(\mF)^2}}\right)
		=\ind\left(D(\theta)\right).
		\end{multline}	
		Since $D(p_1^*\mF\times_{B\times S^1} p_2^*\mF^L)$ and $D(\theta)$
		are invertible for $\theta\neq \frac{1}{4}$, we obtain
Proposition \ref{prop:A.01}.
	\end{proof}
	
	\vspace{3mm}\textbf{Acknowledgements}\ \
	The author would like to thank Professors Xiaonan Ma and U. Bunke
	for helpful discussions.
	He would like to thank Shu Shen and Guoyuan Chen for the conversations.
	He would also like to thank the anonymous referee for various 
	useful suggestions that improved the clarity and quality of the paper.
	This research is partly supported by  Science and Technology Commission 
	of Shanghai Municipality (STCSM), grant No.18dz2271000, Natural Science Foundation of Shanghai, grant No.20ZR1416700 and NSFC No.11931007.
	%I would like to thank Prof. Xiaonan Ma for many discussions.
	
%	I would like to thank the Mathematical Institute of
%	Humboldt-Universit\"at zu Berlin, especially Prof. J. Br\"uning for financial support and hospitality.
%	
%	This research has been financially supported by Collaborative Research Centre "Space
%	- Time - Matte" (SFB 647)

\end{document}